\setlist[enumerate,1]{label=\textup{(\arabic*)}}
\DeclareMathOperator{\loc}{loc}
\DeclareMathOperator{\re}{Re}
 \newtheorem{Theorem}{Theorem}[section]
 \newtheorem{Corollary}[Theorem]{Corollary}
 \newtheorem{Lemma}[Theorem]{Lemma}
 \newtheorem{Proposition}[Theorem]{Proposition}
 \newtheorem{Definition}[Theorem]{Definition}
 \newtheorem{Remark}[Theorem]{Remark}
 \numberwithin{equation}{section}
\begin{document}

\title[On the $p$-Bergman kernel with respect to a functional $\xi$]
{On the $p$-Bergman kernel with respect to a functional $\xi$}

\author{Shijie Bao}
\address{Shijie Bao: Institute of Mathematics, Academy of Mathematics and Systems Science, Chinese Academy of Sciences, Beijing 100190, China}
\email{bsjie@amss.ac.cn}

\author{Qi'an Guan}
\address{Qi'an Guan: School of
Mathematical Sciences, Peking University, Beijing 100871, China}
\email{guanqian@math.pku.edu.cn}

\author{Xun Sun}
\address{Xun Sun: Institute of Mathematics, Academy of Mathematics and Systems Science, Chinese Academy of Sciences, Beijing 100190, China}
\email{sunxun@amss.ac.cn}

\thanks{}

\subjclass[2020]{32A36 32A70 32D15 32L05 32U05}

\keywords{Bergman kernel, plurisubharmonic function}

\date{}

\dedicatory{}

\commby{}


\begin{abstract}
In the present paper, we generalize the notion of the $p$-Bergman kernel and the $\xi$-Bergman kernel to the $p$-Bergman kernel with respect to a functional $\xi$, and establish some properties of the $p$-Bergman kernel with respect to $\xi$. We also study the relations between the $L^p$ versions of higher order Bergman kernels and $\xi$-Bergman kernels, and as applications we give the reproofs and generalizations of some previous results of B\l ocki and Zwonek about higher order Bergman kernels.
\end{abstract}

\maketitle
\setcounter{tocdepth}{1}
\tableofcontents

\section{Introduction}\label{Introduction}

\subsection{Background}

First we recall the $p$-Bergman kernel and the $\xi$-Bergman kernel, which are on the different directions to generalize the classical Bergman kernel.

\subsubsection{$p$-Bergman kernel}

Let $\Omega\subset\mathbb{C}^n$ be a domain. For any $p\in (0,+\infty)$, the \emph{$p$-Bergman space} is defined to be
\[A^p(\Omega)\coloneqq \left\{ f\in\mathcal{O}(\Omega)\colon \|f\|_p^p=\|f\|^p_{L^p(\Omega)}\coloneqq \int_{\Omega}|f|^p<+\infty\right\},\]
(the integrals are always with respect to the Lebesgue measure in the present paper), and the (on-diagonal) \emph{$p$-Bergman kernel} is defined to be
\[K_{p}(z)\coloneqq \Big(\inf\big\{\|f\|_p^p\colon f\in A^p(\Omega) \ \& \ f(z)=1\big\}\Big)^{-1}, \quad z\in\Omega,\]
which was considered by many authors (e.g. \cite{berndtsson3,dwzz,ns,nzz,sakai,siu,TP18,tsuji}).

Recently, Chen--Zhang established a general $p$-Bergman theory in \cite{CZ12} and obtained many basic properties and results. The \emph{off-diagonal $p$-Bergman kernel} is defined to be
\[K_p(z,w)\coloneqq m_p(z,w)K_p(w)\]
for $p\ge 1$, where $m_p(z,w)$ is defined to be the unique element in
\[\{f\in A^p(\Omega)\colon f(w)=1\}\]
which makes $\int_{\Omega}|f|^p$ minimal. Since then the off-diagonal $p$-Bergman kernel was considered by many authors and many results were obtained (e.g. \cite{chen-xiong1,chen-xiong2,li}).

\subsubsection{$\xi$-Bergman kernel}
We now recall the notion of $\xi$-Bergman kernel. Denote $|\alpha|\coloneqq \sum_{j=1}^n\alpha_j$ for any $\alpha\in \mathbb{N}^n$, where $\alpha_j$ denotes the $j$-th component of $\alpha$. Let $\xi$ be an element in the space
\[\ell_1^{(n)}\coloneqq \left\{\xi=(\xi_{\alpha})_{\alpha\in\mathbb{N}^n}\colon\xi_{\alpha}\in\mathbb{C}, \ \sum_{\alpha\in\mathbb{N}^n}|\xi_{\alpha}|\rho^{|\alpha|}<+\infty \text{ for any } \rho>0\right\}.\]
For any $z_0\in\mathbb{C}^n$, $F(z)\in \mathcal{O}_{z_0}$ and $\xi\in\ell_1^{(n)}$, the value that $\xi$ acts on $F$ is defined as
\[(\xi\cdot F)(z_0)\coloneqq \sum_{\alpha\in\mathbb{N}^n}\xi_{\alpha}\frac{F^{\alpha}(z_0)}{\alpha !}.\]
In fact, $\ell_1^{(n)}$ is the dual space of $\mathcal{O}_{z_0}$ under the analytic Krull topology (see \cite{BG1}).

In \cite{BG1}, Bao--Guan introduced the \emph{$\xi$-Bergman kernel} (also called the \emph{Bergman kernel with respect to $\xi$}), which is defined to be
\[K_{\xi}(z)\coloneqq \sup_{f\in A^{2}(\Omega)}\frac{|(\xi\cdot f)(z)|^2}{\int_{\Omega}|f|^2}.\]
Using the \emph{optimal $L^2$ extension theorem} (cf. \cite{blocki,GZ-L2ext}) and \emph{Guan--Zhou Method}, Bao--Guan generalized Berndtsson's result of the log-plurisubharmonicity property for the fiberwise Bergman kernels (see \cite{berndtsson1}) to the $\xi$-Bergman kernels in \cite{BG1}, and used this property to present a new proof of the sharp eﬀectiveness result of the strong openness property (see also \cite{BG4}). Then Bao--Guan \cite{BG2} obtained the log-plurisubharmonicity property for the fiberwise weighted $\xi$-Bergman kernels, and used this property to deduce a new proof of a sharp eﬀectiveness result of the $L^p$ strong openness property obtained by Guan--Yuan in \cite{GY}.

\subsection{$p$-Bergman theory with respect to $\xi$}

In this article, we are going to develop a general $p$-Bergman theory with respect to $\xi$. We fix a $\xi\in\ell_1^{(n)}$ satisfying that there exists an $\alpha_0\in\mathbb{N}^n$ such that $\xi_{\alpha_0}\neq 0$. For $p>0$, 
\[m_{\xi,p}(z)\coloneqq \inf\big\{\|f\|_p\colon f\in A^p(\Omega) \ \& \ (\xi\cdot f)(z)=1\big\},\]
and for $p\ge 1$, $m_{\xi,p}(\cdot,z)$ is defined to be the unique element in 
\[\big\{f\in A^p(\Omega)\colon(\xi\cdot f)(z)=1\big\}\]
which makes $\int_{\Omega}|f|^p$ minimal (see Proposition \ref{pro1} and Proposition \ref{pro6}).

We define the \emph{(on-diagonal) $p$-Bergman kernel with respect to $\xi$} to be 
\[K_{\xi,p}(z)\coloneqq m_{\xi,p}(z)^{-p}\]
for $p>0$ and the \emph{off-diagonal $p$-Bergman kernel with respect to $\xi$} to be 
\[K_{\xi,p}(\cdot, z)\coloneqq m_{\xi,p}(\cdot,z)m_{\xi,p}(z)^{-p}\]
for $p\ge 1$.

When $\xi=(1,0,\ldots,0,\ldots)$, the $p$-Bergman kernel with respect to $\xi$ and oﬀ-diagonal $p$-Bergman kernel with respect to $\xi$ reduce to the $p$-Bergman kernel and oﬀ-diagonal $p$-Bergman kernel defined in \cite{CZ12}. 

When $p=2$, the $p$-Bergman kernel with respect to $\xi$ reduces to the $\xi$-Bergman kernel defined in \cite{BG1}.

Most of the properties of the $p$-Bergman kernel rely on the basic property that 
for any compact set $K\subset\Omega$ and $p>0$, there exists a constant $C$, such that 
\[\sup_{z\in K} |f(z)|\le C_K \|f\|_p\]
for any $f\in A^p(\Omega)$, which follows directly from the sub-mean value property of $|f|^p$. In \cite{BG1}, Bao--Guan proved that for a fixed $\xi\in\ell_1^{(n)}$, there exists a finite constant $C$ such that 
\begin{align}
\label{a1}
\sup_{z\in K} |(\xi\cdot f)(z)|\le C \|f\|_2
\end{align}
for any holomorphic function $f$ on $\Omega$, and the proof relies heavily on the expansion of $L^2$ integrals. From the sub-mean value property of $|f|^p$ and Cauchy's estimate, we can obtain an estimate of $|f^{(\alpha)}(z_0)|$ in view of $\|f\|_{L^p(B(x_0,r))}$ with suitable coefficients, and generalize \eqref{a1} to all $p\in (0,+\infty)$. 

\begin{Proposition}
\label{pro3}
Let $K$ be a compact subset of $\Omega$. For any $p\in (0,+\infty)$, there exists a constant $C_{K,p}\in (0,+\infty)$, such that
\[\sup_{z\in K} |(\xi\cdot f)(z)|\le C_{K,p} \|f\|_{p}\]
for any holomorphic function $f$ on $\Omega$.
\end{Proposition}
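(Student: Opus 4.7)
The strategy is exactly the one suggested in the paragraph preceding the statement: combine a sub-mean-value inequality for $|f|^p$ with Cauchy's estimates on derivatives, then sum using the defining summability of $\xi\in\ell_1^{(n)}$.

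First I would use compactness of $K$ to choose $r>0$ such that the closed ball $\overline{B(z,2r)}\subset\Omega$ for every $z\in K$. Since $f$ is holomorphic, $|f|^p$ is plurisubharmonic (hence subharmonic) for every $p\in(0,+\infty)$, so the sub-mean value inequality on balls gives, for any $w\in B(z,r)$,
\[
|f(w)|^p\le\frac{1}{|B(w,r)|}\int_{B(w,r)}|f|^p\le\frac{1}{|B(0,r)|}\int_{B(z,2r)}|f|^p\le\frac{1}{|B(0,r)|}\|f\|_p^p.
\]
Consequently $\sup_{w\in\overline{B(z,r)}}|f(w)|\le C_1\|f\|_p$, where $C_1=C_1(n,r,p)=|B(0,r)|^{-1/p}$ is independent of $z\in K$.

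Next I would apply Cauchy's estimates on the polydisc of polyradius $(r/\sqrt n,\dots,r/\sqrt n)$ centered at $z$, which is contained in $B(z,r)$. This yields
\[
\frac{|f^{(\alpha)}(z)|}{\alpha!}\le\frac{n^{|\alpha|/2}}{r^{|\alpha|}}\sup_{B(z,r)}|f|\le C_1\left(\frac{\sqrt n}{r}\right)^{|\alpha|}\|f\|_p
\]
for every $\alpha\in\mathbb N^n$ and every $z\in K$. Summing against $\xi$ and setting $\rho=\sqrt n/r$, I then get
\[
|(\xi\cdot f)(z)|\le\sum_{\alpha\in\mathbb N^n}|\xi_\alpha|\,\frac{|f^{(\alpha)}(z)|}{\alpha!}\le C_1\|f\|_p\sum_{\alpha\in\mathbb N^n}|\xi_\alpha|\rho^{|\alpha|}.
\]
The last sum is finite precisely because $\xi\in\ell_1^{(n)}$, and taking the supremum over $z\in K$ produces the desired constant
\[
C_{K,p}=C_1\sum_{\alpha\in\mathbb N^n}|\xi_\alpha|\rho^{|\alpha|}\in(0,+\infty).
\]

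There is no real obstacle here: the argument is forced once one notices that plurisubharmonicity of $|f|^p$ replaces the $L^2$-expansion used in \cite{BG1}, and that the $\ell_1^{(n)}$ condition was designed exactly to absorb factors of the form $\rho^{|\alpha|}$ that arise from Cauchy estimates. The only point requiring mild care is ensuring $r$ is uniform over $K$, which is immediate from compactness and the openness of $\Omega$.
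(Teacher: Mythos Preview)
Your proof is correct and follows essentially the same route as the paper: a uniform radius is chosen by compactness, the sub-mean-value inequality for the plurisubharmonic function $|f|^p$ controls $\sup|f|$ on balls, Cauchy's estimate on an inscribed polydisc then controls $|f^{(\alpha)}(z)|/\alpha!$ by $\rho^{|\alpha|}\|f\|_p$, and the $\ell_1^{(n)}$ condition absorbs the resulting sum. The only differences from the paper's argument are cosmetic (the paper works with the set $U=\{z:\mathrm{dist}(z,K)\le r/2\}$ and the radius $r/2$ rather than your $2r$), but the structure and all key steps coincide.
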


Some basic properties of $p$-Bergman kernel with respect to $\xi$ will be presented, such as $K_{\xi,p}(z)$ is bounded from below by a positive constant and locally bounded from above by a positive constant, and is locally Lipschitz continuous. 

Reproducing formula for the off-diagonal $p$-Bergman kernel established in \cite{CZ12} is one of the most important properties of the off-diagonal $p$-Bergman kernel, and plays a key role in the general theory of $p$-Bergman kernel. The reproducing formula for the off-diagonal $p$-Bergman kernel with respect to $\xi$ also holds by similar methods of calculations of variations. 

\begin{Theorem}
\label{thm1}
For any $z_0\in\Omega$ and $f\in A^p(\Omega)$,
\[(\xi\cdot f)(z_0)=m_{\xi,p}(z_0)^{-p}\int_{\Omega}|m_{\xi,p}(\cdot,z_0)|^{p-2}\overline{m_{\xi,p}(\cdot,z_0)}f.\]
\end{Theorem}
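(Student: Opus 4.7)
The plan is to run the Lagrangian-style variational argument used by Chen--Zhang \cite{CZ12} in the classical case $\xi=(1,0,\ldots)$, adapted to a general functional $\xi\in\ell_1^{(n)}$. Set $g\coloneqq m_{\xi,p}(\cdot,z_0)$, so that $g$ is (by Proposition \ref{pro6}) the unique minimizer of $\|h\|_p^p$ over $\{h\in A^p(\Omega):(\xi\cdot h)(z_0)=1\}$ and $\|g\|_p^p=m_{\xi,p}(z_0)^p$. The strategy is first to establish an orthogonality relation for test functions annihilated by $\xi$ at $z_0$, then to promote it to arbitrary $f\in A^p(\Omega)$ by subtracting off the $\xi$-part.

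For the orthogonality step, fix $f\in A^p(\Omega)$ with $(\xi\cdot f)(z_0)=0$. By linearity of $\xi$, the family $g+\varepsilon f$ satisfies the constraint $(\xi\cdot(g+\varepsilon f))(z_0)=1$ for every $\varepsilon\in\mathbb{C}$, so
\[\Phi(\varepsilon)\coloneqq \int_\Omega |g+\varepsilon f|^p\]
attains its minimum at $\varepsilon=0$. Computing the directional derivatives along $\varepsilon=t$ and $\varepsilon=it$ with $t\in\mathbb{R}$ gives, respectively, $\mathrm{Re}\int_\Omega|g|^{p-2}\overline{g}\,f=0$ and $\mathrm{Im}\int_\Omega|g|^{p-2}\overline{g}\,f=0$, and hence
\[\int_\Omega |g|^{p-2}\overline{g}\,f=0.\]

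To extend this to arbitrary $f\in A^p(\Omega)$, set $c\coloneqq (\xi\cdot f)(z_0)$. Then $f-cg\in A^p(\Omega)$ and $(\xi\cdot(f-cg))(z_0)=0$, so applying the orthogonality relation to $f-cg$ together with $\int_\Omega |g|^{p-2}\overline{g}\,g=\int_\Omega|g|^p=m_{\xi,p}(z_0)^p$ yields
\[\int_\Omega |g|^{p-2}\overline{g}\,f=c\,m_{\xi,p}(z_0)^p.\]
Multiplying by $m_{\xi,p}(z_0)^{-p}$ and substituting $c=(\xi\cdot f)(z_0)$ produces the desired reproducing formula.

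The main obstacle is justifying differentiation under the integral sign in the variational step: for $1\le p<2$ the factor $|g|^{p-2}$ blows up on $\{g=0\}$, so some care is needed. I would control the difference quotient via the mean value estimate
\[\big||g+tf|^p-|g|^p\big|\le p\,|t|\,\big(|g|+|f|\big)^{p-1}|f|\]
for $|t|\le 1$, whose right-hand side lies in $L^1(\Omega)$ by H\"older's inequality since $g,f\in L^p(\Omega)$. Dominated convergence then legitimizes the differentiation, with the convention $|g|^{p-2}\overline{g}\coloneqq 0$ on $\{g=0\}$; this is harmless because $g$ is holomorphic and not identically zero, so $\{g=0\}$ has Lebesgue measure zero, and the limiting integrand is a.e. bounded in modulus by $|g|^{p-1}|f|\in L^1(\Omega)$. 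Well-definedness of $(\xi\cdot(\cdot))(z_0)$ as a continuous linear functional on $A^p(\Omega)$, needed both to interpret the constraint and to make sense of the statement, is supplied by Proposition \ref{pro3}.
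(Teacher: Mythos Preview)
Your proof is correct and follows essentially the same variational route as the paper: the orthogonality relation for test functions with $(\xi\cdot f)(z_0)=0$ is precisely the paper's Lemma \ref{lemma5}, proved there by the same differentiation-under-the-integral argument with the same H\"older/dominated-convergence justification. The one place you differ is in the promotion step: the paper subtracts the auxiliary polynomial $\frac{(\xi\cdot f)(z_0)}{\xi_{\alpha_0}}(z-z_0)^{\alpha_0}$ and then evaluates the resulting identity at $f=m_{\xi,p}(\cdot,z_0)$ to pin down the constant, whereas you subtract $c\,g=c\,m_{\xi,p}(\cdot,z_0)$ directly. Your choice is cleaner, since $\int_\Omega |g|^{p-2}\overline{g}\,g=\|g\|_p^p=m_{\xi,p}(z_0)^p$ is immediate and you avoid introducing $\alpha_0$ at all; the paper's route has the mild advantage of making explicit that the auxiliary only needs to be \emph{some} element with nonzero $\xi$-value, but both arguments are equivalent in substance.
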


Regularity in a minimizing problem is classic, and regularity properties of the off-diagonal $p$-Bergman kernel and $p$-Bergman kernel were considered and developed in \cite{chen-xiong1,chen-xiong2,CZ12,li}, etc. We can generalize these results to the off-diagonal $p$-Bergman kernel with respect to $\xi$ and $p$-Bergman kernel with respect to $\xi$.

For simplicity, we denote
\[\xi\cdot K_{\xi,p}(z,w)\coloneqq (\xi\cdot K_{\xi,p}(\cdot,w))(z)\]
and
\[\xi\cdot m_{\xi,p}(z,w)\coloneqq (\xi\cdot m_{\xi,p}(\cdot,w))(z)\]
for any $z,w\in\Omega$.

\begin{Theorem}
\label{thm3}
Given any $p>1$ and compact subset $K$ of \ $\Omega$, there exists a constant $C$ such that
\begin{align}
\label{ali1}
|m_{\xi,p}(z,w_1)-m_{\xi,p}(z,w_2)|\le C|w_1-w_2|
\end{align}
and
\begin{align}
\label{ali5}
|\xi\cdot m_{\xi,p}(z,w_1)-\xi\cdot m_{\xi,p}(z,w_2)|\le C|w_1-w_2|
\end{align}
for any $z,w_1,w_2\in K$.
Since $K_{\xi,p}(z)$ is locally Lipschitz continuous (see Proposition \ref{pro4}), the same results hold for $K_{\xi,p}(z,w)$ and $\xi\cdot K_{\xi,p}(z,w)$.
\end{Theorem}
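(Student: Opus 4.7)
The plan is to first prove the $L^p$ estimate $\|m_{\xi,p}(\cdot,w_1)-m_{\xi,p}(\cdot,w_2)\|_{L^p(\Omega)}\le C|w_1-w_2|$ for $w_1,w_2\in K$, and then deduce both \eqref{ali1} and \eqref{ali5} by applying Proposition \ref{pro3} to the difference $f=m_{\xi,p}(\cdot,w_1)-m_{\xi,p}(\cdot,w_2)$: with $\xi=(1,0,\dots,0,\dots)$ for \eqref{ali1} and with the given $\xi$ for \eqref{ali5}. A preliminary observation is that, for any $f\in A^p(\Omega)$, the holomorphic function $w\mapsto(\xi\cdot f)(w)$ is Lipschitz on $K$ with constant $\le C_K\|f\|_p$: termwise differentiation of the defining series gives $\partial_{w_j}(\xi\cdot f)=(\eta_j\cdot f)$ for the shifted functional $\eta_j\in\ell_1^{(n)}$ with $(\eta_j)_\beta=\beta_j\xi_{\beta-e_j}$ when $\beta_j\ge 1$ and $0$ otherwise, and Proposition \ref{pro3} applied to each $\eta_j$ on a slightly enlarged compact set bounds the gradient.

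Next, I would set $m_i:=m_{\xi,p}(\cdot,w_i)$, $M_i:=\|m_i\|_p=K_{\xi,p}(w_i)^{-1/p}$, $a:=(\xi\cdot m_1)(w_2)$, and $b:=(\xi\cdot m_2)(w_1)$. Proposition \ref{pro4} gives uniform two-sided control on $M_i$ over $w_i\in K$, and the preliminary step applied to $f=m_i$ yields $|a-1|,|b-1|\le C|w_1-w_2|$. Admissibility of $m_1/a$ at $w_2$ and of $m_2/b$ at $w_1$, combined with the minimizing property, forces $M_2|a|\le M_1$ and $M_1|b|\le M_2$; in particular $|ab|\le 1$, with equality on the diagonal $w_1=w_2$. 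I would then combine the uniform convexity of $L^p$ for $p>1$ (Hanner's inequality) with the Euler--Lagrange orthogonality $\int_\Omega|m_i|^{p-2}\overline{m_i}\,g=0$ for admissible variations $g$ (i.e.\ $g\in A^p(\Omega)$ with $(\xi\cdot g)(w_i)=0$), which is equivalent to Theorem \ref{thm1}, to obtain an inequality of the form $\|m_1-m_2/b\|_p^{\max(2,p)}\lesssim M_2^p/|b|^p-M_1^p$ together with the symmetric counterpart; combining these with the admissibility pair and Proposition \ref{pro4} yields the desired $L^p$ Lipschitz bound.

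\textbf{Main obstacle.} The hardest step will be sharpening the convexity estimate so as to produce a Lipschitz (rather than a merely Hölder) rate in $|w_1-w_2|$. A direct combination of Hanner's inequality with the Lipschitz continuity of $K_{\xi,p}^{-1}$ yields only a Hölder bound with exponent strictly less than $1$ when $p\neq 2$. The needed extra cancellation must come from the observation that $|ab|$ attains its maximum value $1$ at $w_1=w_2$, so that $1-|ab|$ (and hence the right-hand side $M_2^p/|b|^p-M_1^p\le M_2^p(1-|ab|^p)/(|a|^p|b|^p)$) vanishes to second order there. Making this quadratic vanishing rigorous and quantitative, uniformly over $K$, and feeding it back through the convexity inequality with the correct power, is the central technical difficulty; this is precisely where the delicate interplay of the symmetric admissibility pair and the Euler--Lagrange equation enters.
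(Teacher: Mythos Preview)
Your plan has a genuine gap at exactly the point you flag as the main obstacle. The claim that $1-|ab|$ vanishes to second order at $w_1=w_2$ cannot be established from the information you have: the quantity $a=(\xi\cdot m_{\xi,p}(\cdot,w_1))(w_2)$ depends on $w_1$ through the minimizer $m_{\xi,p}(\cdot,w_1)$, whose regularity in $w_1$ is precisely what you are trying to prove. A priori you only know $|a-1|,|b-1|=O(|w_1-w_2|)$ and the constraint $|ab|\le 1$, which together give no better than $1-|ab|=O(|w_1-w_2|)$. Feeding that into Hanner/Clarkson yields only a H\"older bound, as you note, and there is no way to upgrade it without circularity.

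The paper avoids this by a different bootstrap. Define
\[H_{\xi,p}(w_1,w_2)\coloneqq K_{\xi,p}(w_1)+K_{\xi,p}(w_2)-\re\big\{\xi\cdot K_{\xi,p}(w_1,w_2)+\xi\cdot K_{\xi,p}(w_2,w_1)\big\}.\]
The two ingredients are: (i) pointwise $p$-Laplacian inequalities (not Hanner), which after integration and use of the reproducing formula give an estimate with exponent $2$ rather than $\max(2,p)$, namely $\|m_{\xi,p}(\cdot,w_1)-m_{\xi,p}(\cdot,w_2)\|^2\lesssim H_{\xi,p}(w_1,w_2)$ in a suitable local norm; and (ii) an algebraic rearrangement of $H_{\xi,p}$ showing $H_{\xi,p}(w_1,w_2)\lesssim |w_1-w_2|\cdot\|m_{\xi,p}(\cdot,w_1)-m_{\xi,p}(\cdot,w_2)\|$ (with the norm controlled via Proposition \ref{pro3}). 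Chaining (i) and (ii) gives a self-improving inequality $X\lesssim |w_1-w_2|^{1/2}X^{1/2}$, hence $X\lesssim |w_1-w_2|$. The point is that one never proves the defect is $O(|w_1-w_2|^2)$ directly; one proves it is $O(|w_1-w_2|\cdot X)$ and closes the loop. Your setup with $M_2^p/|b|^p-M_1^p$ does not admit such a factorization. Note also that for $p>2$ step (i) produces a weighted $L^2$ norm with weight $|m_{\xi,p}(\cdot,w_i)|^{p-2}$, and passing to an unweighted norm requires a uniform bound on $\int |m_{\xi,p}(\cdot,w)|^{-c}$ over $w\in K$; the paper obtains this from the Demailly--Koll\'ar semicontinuity theorem, an ingredient your proposal does not mention.
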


As in the case of the off-diagonal $p$-Bergman kernel, only a slightly weaker result holds for the $1$-Bergman kernel with respect to $\xi$. 

\begin{Theorem}
\label{thm4}
Let $w_1\in\Omega$, and $K\subset\Omega\backslash\{m_{\xi,1}(\cdot,w_1)=0\}$ be a compact set. There exists a constant $C$ such that
\begin{align}
|m_{\xi,1}(z,w_1)-m_{\xi,1}(z,w_2)|\le C|w_1-w_2|\notag
\end{align}
for any $z,w_2\in K$.
\end{Theorem}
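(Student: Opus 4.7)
My plan is to mimic the competitor and reproducing-formula strategy of Theorem~\ref{thm3}, with the non-vanishing hypothesis on $K$ playing the role that uniform convexity of $L^p$ ($p>1$) plays there. First I would set $f_j:=m_{\xi,1}(\cdot,w_j)$, $\alpha:=(\xi\cdot f_1)(w_2)$, and $\beta:=(\xi\cdot f_2)(w_1)$. Since $f_1/\alpha$ is admissible for the constraint at $w_2$ and $f_2/\beta$ is admissible at $w_1$, minimality yields $|\alpha\beta|\le 1$ and $|\beta|\,\|f_1\|_1\le\|f_2\|_1\le\|f_1\|_1/|\alpha|$. Because $w\mapsto(\xi\cdot f_j)(w)$ is holomorphic, Proposition~\ref{pro3} applied to its gradient yields $|\alpha-1|,|\beta-1|\le C|w_1-w_2|$, and hence also $\bigl|\|f_2\|_1-|\beta|\|f_1\|_1\bigr|\le C|w_1-w_2|$.

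Next I would set $h:=f_2-\beta f_1$, observe $(\xi\cdot h)(w_1)=0$, and invoke Theorem~\ref{thm1} at $p=1$ to obtain the orthogonality
\[\int_\Omega \frac{\overline{f_1}}{|f_1|}\,h\,dV=0.\]
Since $f_1\ne 0$ on $K$, I can choose an open neighborhood $U$ of $K$ with $\inf_U|f_1|>0$, on which the ratio $g:=f_2/f_1$ is holomorphic; because $f_2-f_1=f_1(g-1)$ and $|\beta-1|\le C|w_1-w_2|$, the theorem reduces to showing $|g(z)-\beta|\le C|w_1-w_2|$ on $K$. I would then perform a pointwise Taylor expansion of $|\beta f_1+h|$ around $\beta f_1$ on $\{f_1\ne 0\}$, whose linear term integrates to zero by the orthogonality above. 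Combined with $\|f_2\|_1-|\beta|\|f_1\|_1\le C|w_1-w_2|$, the nonnegative quadratic remainder provides a weighted $L^2$-type control of the angular component $\operatorname{Im}(g/\beta-1)$ on $U$.

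The last step is to upgrade this $L^2$-smallness of the imaginary part of the holomorphic function $g/\beta-1$ on $U$ to a pointwise Lipschitz bound on $K$. Here I would exploit the rigidity of holomorphy: via interior Cauchy estimates and the Cauchy–Riemann equations, smallness of the imaginary part on $U$ forces $g/\beta-1$ to be close to a real constant on any compact $K\Subset U$; and the integral identity $\int_\Omega(g-\beta)|f_1|\,dV=0$ coming from the reproducing formula pins down this residual real constant. The hard part will be obtaining the genuine Lipschitz rate rather than a Hölder-$\tfrac12$ rate, since the second-order expansion alone yields only an $L^2$-square-root bound; turning it into a linear estimate requires using the full variational characterization of the minimizer together with the essentially rank-one nature of the constraint $(\xi\cdot\,\cdot\,)(w_1)=0$ modulo the non-vanishing of $f_1$, which is where the hypothesis $K\subset\Omega\setminus\{m_{\xi,1}(\cdot,w_1)=0\}$ enters crucially.
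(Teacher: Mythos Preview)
Your overall strategy parallels the paper's: work with the holomorphic ratio $g=f_2/f_1$ on a neighborhood of $K$, control $\operatorname{Im}g$ via the variational structure at $p=1$, then upgrade to a pointwise estimate. The first two stages are essentially right. Your Taylor expansion of $|\beta f_1+h|$ is a heuristic version of the exact pointwise inequality~\eqref{al2} at $p=1$ that the paper invokes (yielding the $|\operatorname{Im}(a\bar b)|^2/(|a|+|b|)^3$ term), and your ``Cauchy estimates plus Cauchy--Riemann'' step is what the Borel--Carath\'eodory inequality does precisely.

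The genuine gap is in the last stage, which you yourself flag as ``the hard part.'' Your quadratic remainder together with $\|f_2\|_1-|\beta|\,\|f_1\|_1\le C|w_1-w_2|$ gives only
\[
\int_U (\operatorname{Im}g)^2\ \lesssim\ |w_1-w_2|,
\]
hence $\sup_K|\operatorname{Im}g|\lesssim |w_1-w_2|^{1/2}$, and any Borel--Carath\'eodory/real-constant argument from there yields at best H\"older-$\tfrac12$. The integral identity $\int_\Omega(g-\beta)|f_1|=0$ only pins down the residual real constant up to the same $|w_1-w_2|^{1/2}$ error, so it does not upgrade the rate. Your appeal to the ``full variational characterization'' and ``rank-one nature of the constraint'' is not a mechanism.

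What actually closes the gap in the paper is a bootstrap built into the quantity
\[
H_{\xi,1}(w_1,w_2)=K_{\xi,1}(w_1)+K_{\xi,1}(w_2)-\operatorname{Re}\bigl\{\xi\!\cdot\!K_{\xi,1}(w_1,w_2)+\xi\!\cdot\!K_{\xi,1}(w_2,w_1)\bigr\}.
\]
Via~\eqref{al4} one has $\int_U(\operatorname{Im}g)^2\lesssim H_{\xi,1}(w_1,w_2)$, and Lemma~\ref{lemma7} (which uses the already-established Lipschitz continuity of $K_{\xi,1}$) gives the crucial \emph{bilinear} bound
\[
H_{\xi,1}(w_1,w_2)\ \lesssim\ |w_1-w_2|\cdot\sup_U|m_{\xi,1}(\cdot,w_1)-m_{\xi,1}(\cdot,w_2)|,
\]
not merely $|w_1-w_2|$. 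Combined with Borel--Carath\'eodory and the bound $|h(w_1)|\lesssim|w_1-w_2|$ this yields a self-referential inequality
\[
\sup_{U}|f_1-f_2|\ \lesssim\ |w_1-w_2|+|w_1-w_2|^{1/2}\bigl(\sup_{U}|f_1-f_2|\bigr)^{1/2},
\]
which resolves to the Lipschitz estimate. To repair your argument you would need to replace the first-order quantity $\|f_2\|_1-|\beta|\,\|f_1\|_1$ by a genuinely second-order one that carries an extra factor of the unknown difference; this is exactly what $H_{\xi,1}$ and Lemma~\ref{lemma7} supply.
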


For $k\in\mathbb{N}$ and $\alpha\in (0,1]$, denote by $C_{\loc}^{k,\alpha}(\Omega)$ the space of complex-valued functions $u\in C^k(\Omega)$ whose $k$-th order partial derivatives are locally H\"{o}lder
continuous with exponent $\alpha$. Let $(x_1,\ldots,x_{2n})$ be the real coordinate of $\mathbb{R}^{2n}=\mathbb{C}^n$. Then

\begin{Theorem}
\label{thm5}
For any $p> 1$, $K_{\xi,p}\in C_{\loc}^{1,1}(\Omega)$ and
\[\frac{\partial K_{\xi,p}}{\partial x_j}(z)=p\re \frac{\partial (\xi\cdot K_{\xi,p}(\cdot,z))}{\partial x_j}\Bigg|_z\]
for any $z\in \Omega$ and $1\le j\le 2n$.
\end{Theorem}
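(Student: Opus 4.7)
The plan is to combine the extremal characterization of $m_{\xi,p}$ with a holomorphic Taylor expansion in the first slot of the auxiliary function
\[G(z,w) := (\xi\cdot m_{\xi,p}(\cdot,w))(z),\]
which satisfies $G(w,w)=1$ and $\xi\cdot K_{\xi,p}(\cdot,w) = K_{\xi,p}(w)\,G(\cdot,w)$. For $z,w\in\Omega$ with $G(w,z)\neq 0$, the function $m_{\xi,p}(\cdot,z)/G(w,z)$ is admissible in the extremal problem defining $m_{\xi,p}(w)$, so $K_{\xi,p}(w)\ge K_{\xi,p}(z)\,|G(w,z)|^p$. Swapping $z$ and $w$, and taking $w=z+he_j$ for small real $h$, gives the two-sided bound
\[
K_{\xi,p}(z)\,|G(z+he_j,z)|^p \;\le\; K_{\xi,p}(z+he_j) \;\le\; K_{\xi,p}(z)\,|G(z,z+he_j)|^{-p}.
\]

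For the lower bound, holomorphicity of $G(\cdot,z)$ and $G(z,z)=1$ give
\[G(z+he_j,z) = 1 + h\,\partial_{x_j}G(\cdot,z)\big|_z + O(h^2).\]
For the upper bound, one expands the holomorphic function $G(\cdot,z+he_j)$ at its own base point $z+he_j$ (where it also equals $1$) to obtain
\[G(z,z+he_j) = 1 - h\,\partial_{x_j}G(\cdot,z+he_j)\big|_{z+he_j} + O(h^2).\]
The $O(h^2)$ errors are uniform for $z$ in compacts thanks to Cauchy's estimate together with the uniform bound on $\sup_{z}|G(\cdot,z)|$ implied by \eqref{ali5}. Raising these to the $\pm p$ power, substituting into the two-sided bound, and dividing by $h$ (treating $h>0$ and $h<0$ separately) produces matching asymptotics for the difference quotient, provided the diagonal map $z\mapsto \partial_{x_j}G(\cdot,z)\big|_z$ is continuous. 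This continuity follows because \eqref{ali5} plus holomorphicity in the first slot yields joint continuity of $G$, and Cauchy's integral formula on a fixed small polydisk upgrades this to joint continuity of $\partial_{x_j}G$. The limit reads
\[\frac{\partial K_{\xi,p}}{\partial x_j}(z) = p\,K_{\xi,p}(z)\,\re\,\partial_{x_j}G(\cdot,z)\big|_z = p\,\re\,\partial_{x_j}(\xi\cdot K_{\xi,p}(\cdot,z))\big|_z,\]
using that $K_{\xi,p}(z)$ is real and $\xi\cdot K_{\xi,p}(\cdot,z) = K_{\xi,p}(z)\,G(\cdot,z)$.

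For the $C^{1,1}_{\loc}$ regularity, since $K_{\xi,p}$ is already locally Lipschitz by Proposition \ref{pro4}, it suffices to show $z\mapsto\partial_{x_j}G(\cdot,z)\big|_z$ is locally Lipschitz. Splitting
\[\partial_{x_j}G(\cdot,z_1)\big|_{z_1} - \partial_{x_j}G(\cdot,z_2)\big|_{z_2} = \bigl(\partial_{x_j}G(\cdot,z_1)\big|_{z_1} - \partial_{x_j}G(\cdot,z_1)\big|_{z_2}\bigr) + \partial_{x_j}\bigl(G(\cdot,z_1) - G(\cdot,z_2)\bigr)\big|_{z_2},\]
the first term is $O(|z_1-z_2|)$ via uniform Cauchy bounds on the second holomorphic derivatives of $G(\cdot,z_1)$, and the second is $O(|z_1-z_2|)$ because the sup norm of the holomorphic function $G(\cdot,z_1)-G(\cdot,z_2)$ on a fixed polydisk is $O(|z_1-z_2|)$ by \eqref{ali5}, to which Cauchy's estimate then applies. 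The main technical step throughout is exactly this---promoting the zeroth-order Lipschitz estimate of Theorem \ref{thm3} to uniform Lipschitz control of first-order derivatives of the family $\{G(\cdot,z)\}$---which is delivered by Cauchy's integral formula on a uniformly chosen polydisk together with the local boundedness inherited from \eqref{ali5}.
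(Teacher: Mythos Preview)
Your argument is correct and takes a genuinely different route from the paper. The paper computes the derivative of $K_{\xi,p}^{-1}=m_{\xi,p}^p$ by applying the pointwise convexity inequality \eqref{al3} to the pair $m_{\xi,p}(\cdot,z+te_j)$, $m_{\xi,p}(\cdot,z)$, integrating, and then invoking the reproducing formula (Theorem~\ref{thm1}) to rewrite the resulting integral as $\xi\cdot m_{\xi,p}(z+te_j,z+te_j)-\xi\cdot m_{\xi,p}(z+te_j,z)$; the symmetric computation gives the matching bound from the other side. Your approach bypasses both \eqref{al3} and Theorem~\ref{thm1}: the two-sided multiplicative bound $K_{\xi,p}(z)\,|G(z+he_j,z)|^p \le K_{\xi,p}(z+he_j) \le K_{\xi,p}(z)\,|G(z,z+he_j)|^{-p}$ comes straight from the competitor $m_{\xi,p}(\cdot,z)/G(w,z)$ in the extremal problem, and the rest is a first-order Taylor expansion of the holomorphic function $G(\cdot,w)$ in its first slot. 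This is more elementary and makes the mechanism clearer. For the $C^{1,1}_{\loc}$ regularity, both arguments coincide: your splitting of $\partial_{x_j}G(\cdot,z_1)|_{z_1}-\partial_{x_j}G(\cdot,z_2)|_{z_2}$ is exactly the paper's \eqref{ali8}, and both ultimately rest on the Lipschitz estimate \eqref{ali5} of Theorem~\ref{thm3} together with Cauchy's estimate. One minor remark: the ``uniform bound on $\sup_z|G(\cdot,z)|$'' you invoke is not literally a consequence of \eqref{ali5} alone, but follows directly from Proposition~\ref{pro3} and the uniform upper bound on $\|m_{\xi,p}(\cdot,w)\|_p=m_{\xi,p}(w)$ from Proposition~\ref{pro10}.
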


For $\alpha_1,\alpha_2\in\mathbb{N}^n$, $\alpha_1> \alpha_2$ means that $(\alpha_1)_j\ge (\alpha_2)_j$ for each $j\in \{1,\ldots,n\}$,  \\ and $>$ holds for at least one $j$.

It can be inferred from some basic properties established in this article that $\log K_{\xi,p}(z)$ is a plurisubharmonic function. In further, we can show that $\log K_{\xi,p}(z)$ is strictly plurisubharmonic under the assumption that $p\ge 2$ and there exists an $\alpha_0\in\mathbb{N}^n$ 
such that $\xi_{\alpha_0}\neq 0$ while $\xi_{\alpha}=0$ for all $\alpha>\alpha_0$ with $|\alpha|=|\alpha_0|+1$. 

Since the optimal $L^p$ extension theorem holds for all $p\in (0,2]$ (cf. \cite{GZ-L2ext}), the $p$-Bergman kernel $K_{\xi,p}(z)$ also possesses fiberwise log-plurisubharmonicity when $p\in (0,2]$ by the same method as in \cite{BG1}; see Section \ref{sec-logpsh}.

In \cite{nzz}, Ning--Zhang--Zhou proved a basic difference between the $p$-Bergman kernel and the ordinary Bergman kernel that for any bounded pseudoconvex domain $\Omega$, $K_{p}(z)$ is exhaustive for any $p\in (0,2)$. Similar results hold for the $p$-Bergman kernel with respect to certain $\xi$.

\begin{Theorem}
\label{thm2}
Let $\Omega\subset\mathbb{C}^n$ be a bounded pseudoconvex domain, $0< p< 2$, and $\xi\in \ell_1^{(n)}$ satisfying that $\xi_{\alpha_0}\neq 0$ for an $\alpha_{0}\in \mathbb{N}^{n}$ while $\xi_{\alpha}=0$ for any $\alpha> \alpha_0$. Then 
\[K_{\xi,p}(z)\ge \frac{c}{\delta(z)^p},\]
where $\delta(z)\coloneqq\inf_{w\in\partial\Omega}|z-w|$ and $c$ is a constant independent of $z$.
\end{Theorem}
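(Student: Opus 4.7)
The plan is to exhibit, for each $z_0 \in \Omega$, a single $f \in A^p(\Omega)$ satisfying $(\xi \cdot f)(z_0) = 1$ and $\|f\|_p^p \leq C \delta(z_0)^p$ for a constant $C$ independent of $z_0$. Since $K_{\xi,p}(z_0) = m_{\xi,p}(z_0)^{-p} \geq \|f\|_p^{-p}$, the stated lower bound $K_{\xi,p}(z_0) \geq c/\delta(z_0)^p$ follows with $c = C^{-1}$.

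The key reduction is a factorization trick that uses the full strength of the hypothesis on $\xi$. Given any holomorphic $h$ on $\Omega$, set
\[f(w) \coloneqq \xi_{\alpha_0}^{-1}(w-z_0)^{\alpha_0} h(w).\]
Expanding $f$ at $z_0$, its Taylor coefficient at $(w-z_0)^\gamma$ vanishes unless $\gamma \ge \alpha_0$ (componentwise), in which case it equals $\xi_{\alpha_0}^{-1} h^{(\gamma-\alpha_0)}(z_0)/(\gamma-\alpha_0)!$. The hypothesis $\xi_\alpha = 0$ for every $\alpha > \alpha_0$ then forces only the $\gamma = \alpha_0$ summand to survive in $(\xi \cdot f)(z_0) = \sum_\gamma \xi_\gamma f^{(\gamma)}(z_0)/\gamma!$, yielding the clean identity $(\xi\cdot f)(z_0) = h(z_0)$. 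Since $|(w-z_0)^{\alpha_0}| \le d^{|\alpha_0|}$ on $\Omega$ with $d = \operatorname{diam}(\Omega)$, we also have $\|f\|_p^p \le d^{p|\alpha_0|}|\xi_{\alpha_0}|^{-p}\|h\|_p^p$. Thus the theorem reduces to exhibiting $h \in A^p(\Omega)$ with $h(z_0) = 1$ and $\|h\|_p^p \le C_0 \delta(z_0)^p$ for a constant $C_0$ independent of $z_0$, i.e., to the estimate $K_p(z_0) \ge c_0/\delta(z_0)^p$ for the \emph{ordinary} $p$-Bergman kernel.

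This ordinary $p$-Bergman lower bound is the Ning--Zhang--Zhou estimate from \cite{nzz}. Its proof relies on the optimal $L^p$ extension theorem (cf.\ \cite{GZ-L2ext}), which holds for $p \in (0,2]$: one extends a suitably normalized function (e.g., a constant) from a carefully chosen hyperplane section through $z_0$ to all of $\Omega$, weighted by a psh function calibrated so that the resulting extension has $L^p$ norm controlled by $\delta(z_0)^p$. One may either invoke \cite{nzz} directly or replay this extension argument in place.

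The main obstacle is securing the sharp $\delta^p$ rate in the NZZ-type estimate; the factorization reduction itself is a short Taylor-coefficient computation exploiting the hypothesis $\xi_\alpha = 0$ for $\alpha > \alpha_0$, while the condition $\xi_{\alpha_0}\neq 0$ ensures that the factorization is nondegenerate and produces a nonzero value of $(\xi\cdot f)(z_0)$. The restriction $p < 2$ enters precisely through the validity of the optimal $L^p$ extension theorem, which is the analytic heart of the argument.
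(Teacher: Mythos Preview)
Your proposal is correct and follows essentially the same route as the paper: both multiply a holomorphic $h$ by $(w-z_0)^{\alpha_0}$ so that the hypothesis $\xi_\alpha=0$ for $\alpha>\alpha_0$ collapses $(\xi\cdot f)(z_0)$ to the point value $h(z_0)$, and then feed in an $L^p$-extension--based lower bound for the ordinary $K_p$ (the paper carries out the NZZ step in place by extending $1/(z_1-z_1^0)$ from a complex line through $z_0$, rather than citing it). One small correction to your closing commentary: the restriction $p<2$ enters not through the extension theorem itself, which is valid for all $p\in(0,2]$, but through the local $L^p$-integrability of $|z|^{-1}$ in one complex variable needed to make the singular test function on the line admissible.
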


The same result holds for $p=2$ under the extra assumption that $\Omega$ possesses $C^2$ boundary.

Since $K_{\xi,p}(z)$ is a plurisubharmonic function, a bounded domain is pseudoconvex if and only if $K_{\xi,p}(z)$ is exhaustive for $p\in (0,2)$ and some $\xi$ satisfying the conditions in Theorem \ref{thm2}.

\subsection{Higher order Bergman kernel and $\xi$-Bergman kernel}
We now study the relations between two $L^p$ versions of generalized Bergman kernels, the so-called \emph{higher order Bergman kernels} and the $\xi$-Bergman kernels introduced above. As the applications of the relations and the log-plurisubharmonicity of fiberwise $\xi$-Bergman kernels, we give the reproofs and generalizations of some results of B\l ocki--Zwonek in \cite{BlZw20}.

\subsubsection{Higher order Bergman kernels}
For this type of generalized Bergman kernels, we follow the notations in \cite{BlZw20}. Let
\[H(z)=\sum_{\alpha\in\mathbb{N}^n,\,|\alpha|=k}a_{\alpha}z^{\alpha}\]
be a homogeneous polynomial on $\mathbb{C}^n$ of degree $k$, and define the operator
\[P_H(f)=\sum_{\alpha\in\mathbb{N}^n,\,|\alpha|=k}a_{\alpha}D^{\alpha}f,\]
where $f\in\mathcal{O}(\Omega)$ for some domain $\Omega\subset\mathbb{C}^n$. For any $z\in \Omega$, the higher order Bergman kernel related to $H$ on $\Omega$ is defined as (see \cite{BlZw20}):
\begin{equation*}
    \begin{aligned}
        K^H_{\Omega}(z)\coloneqq \sup\big\{|P_H(f)(z)|^2 \colon f^{(j)}(z)=0, \  j=0,\ldots, k-1,& \\
        f\in A^2(\Omega), \ & \|f\|_{L^2(\Omega)}\le 1\big\}.
    \end{aligned}
\end{equation*}
Here $f^{(j)}(z)$ denotes the $j$-th Fr\'{e}chet derivative of $f$ at $z$. Particularly, $K^1_{\Omega}(z)=K_{\Omega}(z)$ when the polynomial $H=1$.

In the present paper, we also consider the $L^p$ version of the higher order Bergman kernel. Let $p\in (0,+\infty)$, and define the \emph{higher order $p$-Bergman kernel} related to $H$ on $\Omega$ by:
\begin{equation*}
    \begin{aligned}
        K^{H,p}_{\Omega}(z)\coloneqq \sup\big\{|P_H(f)(z)|^p \colon f^{(j)}(z)=0, \  j=0,\ldots, k-1,& \\
        f\in A^p(\Omega), \ & \|f\|_{L^p(\Omega)}\le 1\big\}.
    \end{aligned}
\end{equation*}
We also use the following notation to denote the $p$-Bergman kernels on a domain $\Omega$ with respect to a functional $\xi\in\ell_1^{(n)}$ since we will consider them on different domains:
\begin{equation*}
        K_{\xi,\Omega,p}(z)\coloneqq \sup\big\{|(\xi\cdot f)(z)|^p \colon f\in A^p(\Omega), \ \|f\|_{L^p(\Omega)}\le 1\big\}.
\end{equation*}

\subsubsection{Relations between the generalized Bergman kernels}

For any homogeneous polynomial
\[H(z)=\sum_{\alpha\in\mathbb{N}^n,\,|\alpha|=k}a_{\alpha}z^{\alpha}\]
of degree $k$, we denote $\xi\in \mathbf{S}_H$ if $\xi=(\xi_{\alpha})\in\ell_1^{(n)}$ satisfies:
\begin{equation*}
    \xi_{\alpha}=
\begin{cases}
    a_{\alpha}\cdot \alpha! & \text{if} \ |\alpha|=k, \\
    0 & \text{if} \ |\alpha|>k.\\
\end{cases}
\end{equation*}
It is clear that for any $f\in A^p(\Omega)$ with $f^{(j)}(z)=0$, $j=0,\ldots,k-1$, we have $P_H(f)(z)=(\xi\cdot f)(z)$ for any $\xi\in\mathbf{S}_H$, so
\begin{equation}\label{eq-KHKxi}
    K^{H,p}_{\Omega}(z)\le K_{\xi,\Omega,p}(z), \quad \forall\, \xi\in\mathbf{S}_H.
\end{equation}

Moreover, we have the following result.
\begin{Theorem}\label{thm-inf}
    Let $\Omega$ be a domain in $\mathbb{C}^n$, $z\in \Omega$, and $p\in [1,+\infty)$. Then for any homogeneous polynomial $H$ of degree $k$, we have
    \begin{equation*}
        K^{H,p}_{\Omega}(z)=\inf_{\xi\in\mathbf{S}_H}K_{\xi,\Omega,p}(z)=\min_{\xi\in\mathbf{S}_H}K_{\xi,\Omega,p}(z),
    \end{equation*}
    where the last equality means that the infimum in the middle term can be achieved by some $\xi\in\mathbf{S}_H$.
\end{Theorem}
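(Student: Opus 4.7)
The plan is to recognize Theorem~\ref{thm-inf} as an instance of the Hahn--Banach extension principle. Fix $z\in\Omega$. By Proposition~\ref{pro3}, every $\xi\in\ell_1^{(n)}$ yields a bounded linear functional $L_\xi:A^p(\Omega)\to\mathbb{C}$, $L_\xi(f):=(\xi\cdot f)(z)$, with $K_{\xi,\Omega,p}(z)=\|L_\xi\|^p$. For $\xi\in\mathbf{S}_H$ this decomposes as $L_\xi=L_H+M_\xi$, where $L_H(f):=P_H(f)(z)$ is independent of $\xi\in\mathbf{S}_H$ and $M_\xi(f):=\sum_{|\alpha|<k}\xi_\alpha f^{\alpha}(z)/\alpha!$ is a combination of bounded evaluation functionals. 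Set
\[V:=\big\{f\in A^p(\Omega):f^{(j)}(z)=0\text{ for }j=0,\ldots,k-1\big\},\]
which is a closed subspace of $A^p(\Omega)$ of codimension at most $N:=\binom{n+k-1}{n}$. By definition, $K_\Omega^{H,p}(z)^{1/p}=\|L_H|_V\|$, the operator norm of the restriction $L_H|_V$.

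The key step is to identify $\{L_\xi:\xi\in\mathbf{S}_H\}$ with the affine set $L_H+V^\perp$ of all bounded linear extensions of $L_H|_V$ to $A^p(\Omega)$. The inclusion $\subseteq$ is immediate since $M_\xi$ vanishes on $V$. For the reverse inclusion, fix $L\in V^\perp$. Since $V$ is precisely the kernel of the Taylor-jet map $j_z:A^p(\Omega)\to\mathbb{C}^N$, $f\mapsto(f^{\alpha}(z)/\alpha!)_{|\alpha|<k}$, the functional $L$ factors through $j_z$, giving a linear functional $\bar L$ on $W:=j_z(A^p(\Omega))\subseteq\mathbb{C}^N$; extending $\bar L$ to all of $\mathbb{C}^N$ by elementary linear algebra produces coefficients $(\eta_\alpha)_{|\alpha|<k}$ with $L=M_\eta$, and any $\xi\in\mathbf{S}_H$ whose lower-order components are $\eta$ (such a $\xi$ exists and lies in $\ell_1^{(n)}$ since its support is finite) satisfies $L_\xi=L_H+L$.

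Granted this identification, the theorem follows from standard Hahn--Banach duality. On the one hand, $\|L_H+M\|\ge\|(L_H+M)|_V\|=\|L_H|_V\|$ for every $M\in V^\perp$; on the other hand, a norm-preserving Hahn--Banach extension of $L_H|_V$ to $A^p(\Omega)$ realizes equality. Raising to the $p$-th power gives $\inf_{\xi\in\mathbf{S}_H}K_{\xi,\Omega,p}(z)=K_\Omega^{H,p}(z)$. Finally, attainment of the infimum is automatic: $V^\perp$ is finite-dimensional, and the continuous map $M\mapsto\|L_H+M\|$ is coercive (since $\|M\|\le\|L_H+M\|+\|L_H\|$), hence attains its minimum at some $M_{\xi^*}$ with $\xi^*\in\mathbf{S}_H$. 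The main conceptual point is the surjectivity in the second paragraph; once that is in place, the remainder is a textbook duality argument.
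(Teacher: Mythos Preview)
Your proof is correct and follows the same Hahn--Banach strategy as the paper: both arguments identify $K^{H,p}_\Omega(z)^{1/p}$ with the norm of $L_H|_V$, invoke Hahn--Banach to extend, and then recognize the extension as $L_\xi$ for some $\xi\in\mathbf{S}_H$. The one genuine streamlining in your version is the identification $\{L_\xi:\xi\in\mathbf{S}_H\}=L_H+V^\perp$ via factoring any $L\in V^\perp$ through the finite-rank jet map $j_z$; this works for arbitrary $\Omega$ and lets you avoid the paper's three-step organization, which first treats bounded domains (so that the polynomial part $f_0$ of the Taylor decomposition $f=f_0+f_1$ lies in $A^p$), then passes to general domains by exhaustion, and finally handles attainment separately via continuity of $\xi\mapsto K_{\xi,\Omega,p}(z)$ and a compactness argument in $\mathbf{S}_H$. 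Your coercivity argument in the finite-dimensional space $V^\perp$ is cleaner than the paper's Step~3 and, as you implicitly note, is in any case redundant once Hahn--Banach has produced an explicit minimizing extension.
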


\subsubsection{Reproofs and generalizations of the results of B\l ocki--Zwonek}
We recall some notations and conventions in \cite{BlZw20}. In the following, we always assume that $\Omega$ is a pseudoconvex domain in $\mathbb{C}^n$ containing the origin $o$. Denote by $G_{\Omega}(\cdot,z)$ the pluricomplex Green function on $\Omega$ with the pole at $z\in \Omega$ and $\Omega_a\coloneqq e^{-a}\{G_{\Omega}(\cdot,o)<a\}$ for each $a\le 0$.

Recall the \emph{Azukawa pseudometric} as follows:
\begin{equation*}
    A_{\Omega}(z;X)\coloneqq \exp \left(\limsup_{\mathbb{C}^*\ni \lambda\to 0}(G_{\Omega}(z+\lambda X,z)-\log|\lambda|)\right),
\end{equation*}
for $z\in \Omega$ and $X\in\mathbb{C}^n$. The \emph{Azukawa indicatrix} at $z$ is
\begin{equation*}
    I_{\Omega}(z)\coloneqq \big\{X\in\mathbb{C}^n \colon A_{\Omega}(z;X)<1\big\}.
\end{equation*}

We will prove the following results.

\begin{Theorem}\label{thm-Kp}
    Let $p\in (0,2]$, and $H$ a homogeneous polynomial on $\mathbb{C}^n$ of degree $k$. For every $\xi\in \mathbf{S}_H$, we have that
    \begin{equation*}
        (-\infty,0]\ni a\rightarrow \log K_{\xi,\{G_{\Omega}(\cdot,o)<a\},p}(o)
    \end{equation*}
    is convex, and
    \[(-\infty,0]\ni a\rightarrow e^{(2n+pk)a}K_{\xi,\{G_{\Omega}(\cdot,o)<a\},p}(o)\]
    is non-decreasing.

    Moreover, for every $\xi\in \mathbf{S}_H$,
\begin{equation*}
    K_{\xi,\Omega,p}(o)\ge\lim_{a\to -\infty}e^{(2n+pk)a}K_{\xi,\{G_{\Omega}(\cdot,o)<a\},p}(o)\ge K_{I_{\Omega}(o)}^{H,p}(o).
\end{equation*}
\end{Theorem}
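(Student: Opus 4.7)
The plan is to prove the three assertions---convexity, the monotonicity of $e^{(2n+pk)a}K_{\xi,U_a,p}(o)$, and the sandwich inequality---in order, where I abbreviate $U_a:=\{G_\Omega(\cdot,o)<a\}$ and $\Omega_a:=e^{-a}U_a$. All three rely on combining the fiberwise log-plurisubharmonicity of the $p$-Bergman kernel with respect to $\xi$ (valid for $p\in(0,2]$ via the optimal $L^p$ extension theorem, see Section \ref{sec-logpsh}) with a rescaling argument that relates $U_a$ to the Azukawa indicatrix.

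For the convexity, I would consider the pseudoconvex total space
\[D:=\{(z,t)\in\Omega\times\mathbb{C}:G_\Omega(z,o)<\re t\},\]
whose defining function $G_\Omega(z,o)-\re t$ is plurisubharmonic on $\Omega\times\mathbb{C}$. Its fiber $D_t$ depends only on $a=\re t$ and equals $U_a$, and $o\in D_t$ for every $t$ since $G_\Omega(o,o)=-\infty$. Applying the fiberwise result to $D$ shows that $\log K_{\xi,D_t,p}(o)$ is plurisubharmonic in $t$, and since it depends only on $\re t$, it is convex in $a$.

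For the monotonicity, convexity combined with the asymptotic upper bound $\log K_{\xi,U_a,p}(o)\le -(2n+pk)a+C$ as $a\to-\infty$ is sufficient, since a convex function dominated by such a line at $-\infty$ has left derivative at least $-(2n+pk)$ at every point. The upper bound itself follows from the classical log-pole inequality $G_\Omega(z,o)\le\log|z-o|+C_0$, which places a ball $B(o,e^{a-C_0})$ inside $U_a$ for $a$ sufficiently negative; the standard $L^p$-Cauchy estimate on this ball yields $|D^\alpha f(o)|^p\le C_\alpha e^{-(2n+p|\alpha|)a}$ for $f\in A^p(U_a)$ with $\|f\|_p\le 1$, and summing over $|\alpha|\le k$ with coefficients $\xi_\alpha/\alpha!$ yields $|(\xi\cdot f)(o)|^p\le Ce^{-(2n+pk)a}$, the top-order term $|\alpha|=k$ being dominant as $a\to-\infty$.

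Turning to the sandwich inequality, the first inequality follows from the monotonicity by evaluating at $a=0$, using that $U_0$ differs from $\Omega$ only by a Lebesgue-null pluripolar set, so $K_{\xi,U_0,p}(o)=K_{\xi,\Omega,p}(o)$. For the second inequality, I would rescale via $z=e^a w$: setting $\tilde g(w):=f(e^a w)$ identifies $A^p(U_a)$ with $A^p(\Omega_a)$ up to the normalization $\|f\|_p^p=e^{2na}\|\tilde g\|_p^p$, and a direct computation using $\xi\in\mathbf{S}_H$ gives
\[e^{(2n+pk)a}\frac{|(\xi\cdot f)(o)|^p}{\|f\|_p^p}=\frac{\left|P_H(\tilde g)(o)+\sum_{|\alpha|<k}\xi_\alpha e^{a(k-|\alpha|)}\frac{D^\alpha \tilde g(o)}{\alpha!}\right|^p}{\|\tilde g\|_p^p}.\]
Restricting the supremum to $\tilde g$ with $D^\alpha \tilde g(o)=0$ for $|\alpha|<k$ annihilates the sum and yields $e^{(2n+pk)a}K_{\xi,U_a,p}(o)\ge K^{H,p}_{\Omega_a}(o)$; combined with the inclusion $\Omega_a\subset I_\Omega(o)$ (any admissible test function on $I_\Omega(o)$ restricts to $\Omega_a$ with no larger $L^p$-norm), this produces $K^{H,p}_{\Omega_a}(o)\ge K^{H,p}_{I_\Omega(o)}(o)$, completing the proof after letting $a\to-\infty$. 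The main technical point is justifying the domain inclusion $\Omega_a\subset I_\Omega(o)$, which rests on the subharmonic behavior of $\lambda\mapsto G_\Omega(\lambda X,o)-\log|\lambda|$ near $\lambda=0$; all remaining steps are routine rescaling, $L^p$-Cauchy estimates, or direct adaptations of B\l ocki--Zwonek's strategy to the $\xi$-Bergman setting.
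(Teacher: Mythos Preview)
Your treatment of the convexity and monotonicity is correct and coincides with the paper's argument (the pseudoconvex fibration over $\{\re t<0\}$, together with log-convexity plus the upper bound of Lemma~\ref{lem-BK.bounded.above}). The first inequality is also fine, though your justification is slightly off: in fact $U_0=\Omega$ \emph{exactly}, not merely up to a null set, because $G_\Omega(\cdot,o)$ is plurisubharmonic, $\le 0$, with a pole at $o$, so the strong maximum principle forbids $G_\Omega(z,o)=0$ at any interior point.

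The genuine gap is the claimed inclusion $\Omega_a\subset I_\Omega(o)$ in the last step, which is \emph{false} in general, even for bounded hyperconvex domains. A one-variable counterexample: let $\Omega=\psi(\Delta)$ with $\psi(w)=w+cw^2$ for small $c>0$; then $\psi'(0)=1$ gives $I_\Omega(0)=\Delta$, while $\Omega_a=e^{-a}\psi(\Delta_{e^a})$ contains the point $e^{-a}\psi((1-\epsilon)e^a)=(1-\epsilon)+c(1-\epsilon)^2e^a$, which has modulus $>1$ once $\epsilon<c(1-\epsilon)^2e^a$. Your suggested justification via the subharmonic function $v(\lambda)=G_\Omega(\lambda X,o)-\log|\lambda|$ does not rescue this: $v$ is indeed subharmonic on a punctured neighborhood of $0$ and extends across, but subharmonicity only yields $v(0)\le$ circular \emph{averages}, not the pointwise bound $v(0)\le v(e^a)$ that the inclusion would require. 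The paper instead reduces to bounded hyperconvex $\Omega$ and invokes Zwonek's result that $\Omega_a\to I_\Omega(o)$ in the Hausdorff sense; this furnishes only the asymptotic containment $\Omega_a\subset(1+\delta)I_\Omega(o)$ for $a\ll 0$, but since a direct rescaling gives $K^{H,p}_{(1+\delta)I_\Omega(o)}(o)\ge(1+\delta)^{-(2n+pk)}K^{H,p}_{I_\Omega(o)}(o)$, that suffices for the $\limsup$ inequality after sending $\delta\to 0$.
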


The following are some corollaries of Theorem \ref{thm-Kp}, where the cases $p=2$ of them were proved by B\l ocki--Zwonek in \cite{BlZw20}.

\begin{Corollary}[see \cite{BlZw20} for $p=2$]\label{cor-BS.nontrivial}
    For $p\in (0,2]$, if the space $A^p(I_{\Omega}(o))$ is non-trivial, then so is $A^p(\Omega)$.
\end{Corollary}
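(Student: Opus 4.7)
The plan is to leverage Theorem \ref{thm-Kp}, specifically the inequality
\[K_{\xi,\Omega,p}(o)\ge K_{I_{\Omega}(o)}^{H,p}(o),\]
by producing, from a nontrivial element of $A^p(I_\Omega(o))$, a suitable homogeneous polynomial $H$ that forces the right-hand side to be strictly positive.

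First I would pick a nonzero $f_0\in A^p(I_\Omega(o))$ and expand it as a power series at the origin, $f_0(z)=\sum_{\alpha\in\mathbb{N}^n} c_\alpha z^\alpha$. Let $k$ be the smallest integer for which some coefficient $c_\alpha$ with $|\alpha|=k$ is nonzero; equivalently, $f_0^{(j)}(o)=0$ for $j=0,\dots,k-1$ while $f_0^{(k)}(o)\neq0$. Next I would choose the homogeneous polynomial
\[H(z)=\sum_{|\alpha|=k}\overline{c_\alpha}\, z^{\alpha},\]
for which a direct computation gives $P_H(f_0)(o)=\sum_{|\alpha|=k}\overline{c_\alpha}\,c_\alpha\,\alpha!>0$. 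After normalizing $f_0$ so that $\|f_0\|_{L^p(I_\Omega(o))}\le 1$, the function $f_0$ is an admissible competitor in the definition of $K_{I_\Omega(o)}^{H,p}(o)$ (both the vanishing conditions up to order $k-1$ and the $L^p$-norm constraint hold), hence
\[K_{I_\Omega(o)}^{H,p}(o)\ge |P_H(f_0)(o)|^p>0.\]

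Now I would invoke Theorem \ref{thm-Kp}: for any $\xi\in\mathbf{S}_H$,
\[K_{\xi,\Omega,p}(o)\ge\lim_{a\to-\infty}e^{(2n+pk)a}K_{\xi,\{G_\Omega(\cdot,o)<a\},p}(o)\ge K_{I_\Omega(o)}^{H,p}(o)>0.\]
Unwinding the definition of $K_{\xi,\Omega,p}(o)$, the strict positivity means there exists $f\in A^p(\Omega)$ with $(\xi\cdot f)(o)\neq0$; in particular $f\not\equiv 0$, so $A^p(\Omega)$ is nontrivial.

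The step I expect to require the most care is making sure the admissibility conditions in the definition of $K^{H,p}_{I_\Omega(o)}(o)$ are really met by $f_0$ — namely verifying that the vanishing of the lower order Fréchet derivatives at $o$ corresponds exactly to the Taylor coefficients $c_\alpha$ with $|\alpha|<k$ being zero, and that the choice of $H$ makes $P_H(f_0)(o)$ nonzero. Once this is set up, the rest is a direct application of Theorem \ref{thm-Kp}, so no further analytic input is needed.
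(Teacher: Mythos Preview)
Your proposal is correct and follows essentially the same approach as the paper's proof: both produce a homogeneous polynomial $H$ from a nonzero element of $A^p(I_\Omega(o))$ so that $K^{H,p}_{I_\Omega(o)}(o)>0$, and then apply Theorem \ref{thm-Kp} with any $\xi\in\mathbf{S}_H$ to conclude $K_{\xi,\Omega,p}(o)>0$. Your version simply spells out more explicitly how to choose $H$ (via the lowest-order nonvanishing Taylor homogeneous component and the conjugate coefficients), while the paper leaves this step implicit.
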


\begin{Corollary}[see \cite{BlZw20} for $p=2$]\label{cor-higher.BK.non.decreasing}
    If $p\in [1,2]$, then the function
    \begin{equation*}
        (-\infty,0]\ni a\rightarrow K_{\Omega_a}^{H,p}(o)
    \end{equation*}
    is non-decreasing.
\end{Corollary}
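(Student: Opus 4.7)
The plan is to combine a scaling identity with Theorems \ref{thm-inf} and \ref{thm-Kp}. The hypothesis $p\in[1,2]$ is precisely the overlap of their domains of applicability: Theorem \ref{thm-inf} demands $p\ge 1$, Theorem \ref{thm-Kp} demands $p\le 2$.

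First I would establish the scaling identity
\begin{equation*}
K^{H,p}_{\Omega_a}(o)\;=\;e^{(2n+pk)a}\,K^{H,p}_{\{G_\Omega(\cdot,o)<a\}}(o)
\end{equation*}
for each $a\le 0$. Since $\Omega_a$ is the image of $\{G_\Omega(\cdot,o)<a\}$ under $z\mapsto e^{-a}z$, the pullback $f\mapsto g$, $g(w):=f(e^a w)$, is a bijection between the admissible competitors for the two kernels: the conditions $f^{(j)}(o)=0$ for $0\le j\le k-1$ are preserved, $k$-homogeneity of $H$ gives $P_H(g)(o)=e^{ka}P_H(f)(o)$, and a change of variables gives $\|g\|_{L^p(\Omega_a)}^p = e^{-2na}\,\|f\|_{L^p(\{G_\Omega(\cdot,o)<a\})}^p$. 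Normalizing and taking suprema produces the total factor $e^{(2n+pk)a}$.

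Next, writing $F_\xi(a):=e^{(2n+pk)a}\,K_{\xi,\{G_\Omega(\cdot,o)<a\},p}(o)$, Theorem \ref{thm-inf} combined with the scaling above yields
\begin{equation*}
K^{H,p}_{\Omega_a}(o)=\inf_{\xi\in\mathbf{S}_H}F_\xi(a).
\end{equation*}
By Theorem \ref{thm-Kp}, each $F_\xi$ is non-decreasing on $(-\infty,0]$. Therefore, for $a_1\le a_2\le 0$ and any fixed $\xi\in\mathbf{S}_H$ we have $K^{H,p}_{\Omega_{a_1}}(o)\le F_\xi(a_1)\le F_\xi(a_2)$, and taking the infimum over $\xi$ on the right-hand side gives $K^{H,p}_{\Omega_{a_1}}(o)\le K^{H,p}_{\Omega_{a_2}}(o)$.

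The only genuine computation is the change-of-variables underlying the scaling identity, which I do not expect to pose real difficulty: the prescribed vanishing of derivatives of order $<k$ renders only the $k$-homogeneous part of $P_H$ active at $o$, matching the $k$-homogeneity of $H$ and making the dilation act by a single monomial factor. All deeper content is carried by Theorems \ref{thm-inf} and \ref{thm-Kp}.
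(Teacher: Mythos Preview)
Your proposal is correct and follows essentially the same approach as the paper: combine the scaling identity $K^{H,p}_{\Omega_a}(o)=e^{(2n+pk)a}K^{H,p}_{\{G_\Omega(\cdot,o)<a\}}(o)$ with Theorem~\ref{thm-inf} to write $K^{H,p}_{\Omega_a}(o)=\inf_{\xi\in\mathbf{S}_H}F_\xi(a)$, then use Theorem~\ref{thm-Kp} to conclude each $F_\xi$ is non-decreasing, hence so is the infimum. You supply more detail on the change-of-variables step than the paper does, but the structure is identical.
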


\begin{Corollary}[see \cite{BlZw20} for $p=2$]\label{cor-BS.infinite.dim}
    Let $p\in [1,2]$. Then $K_{\Omega}^{H,p}(o)\ge K_{I_{\Omega}(o)}^{H,p}(o)$. Moreover, if the space $A^p(I_{\Omega}(o))$ is infinitely dimensional, then so is $A^p(\Omega)$.
\end{Corollary}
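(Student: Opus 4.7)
The plan is to derive both assertions from Theorems~\ref{thm-inf} and~\ref{thm-Kp}, combined with the homogeneous expansion of holomorphic functions on the balanced pseudoconvex open set $I_\Omega(o)$.

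For the inequality $K_{\Omega}^{H,p}(o)\ge K_{I_{\Omega}(o)}^{H,p}(o)$, since $p\in[1,2]\subset[1,+\infty)$, Theorem~\ref{thm-inf} yields $K^{H,p}_\Omega(o)=\min_{\xi\in\mathbf{S}_H}K_{\xi,\Omega,p}(o)$, while Theorem~\ref{thm-Kp} supplies $K_{\xi,\Omega,p}(o)\ge K^{H,p}_{I_\Omega(o)}(o)$ for each $\xi\in\mathbf{S}_H$; taking the minimum over $\xi$ gives the claim.

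For the second assertion, I assume $\dim A^p(I_\Omega(o))=\infty$ and construct, for every $N\in\mathbb{N}$, a nonzero $g_N\in A^p(\Omega)$ vanishing at $o$ to some order $k_N\ge N$; choosing $N$ along a sequence tending to infinity produces a family with strictly increasing vanishing orders at $o$, hence linearly independent, which forces $\dim A^p(\Omega)=\infty$. The construction has two steps. First, on the balanced set $I_\Omega(o)$ every $f\in \mathcal{O}(I_\Omega(o))$ decomposes as $f(z)=\sum_{k\ge 0}H_k^{(f)}(z)$ with $H_k^{(f)}$ homogeneous of degree $k$, recovered by the Fourier formula
\[H_k^{(f)}(z)=\frac{1}{2\pi}\int_0^{2\pi}f(e^{i\theta}z)\,e^{-ik\theta}\,d\theta.\]
Since Lebesgue measure on $I_\Omega(o)$ is invariant under $z\mapsto e^{i\theta}z$, Minkowski's integral inequality (requiring $p\ge 1$) yields $\|H_k^{(f)}\|_{L^p(I_\Omega(o))}\le \|f\|_{L^p(I_\Omega(o))}$, so each $H_k^{(f)}\in A^p(I_\Omega(o))$. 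Second, the $(N-1)$-jet evaluation $f\mapsto (D^\alpha f(o))_{|\alpha|<N}$ on $A^p(I_\Omega(o))$ has finite-dimensional target, so its kernel contains a nonzero $f\in A^p(I_\Omega(o))$ vanishing to order $\ge N$ at $o$; writing its lowest nonzero homogeneous term as $H=\sum_{|\alpha|=k}b_\alpha z^\alpha$ of degree $k\ge N$, I pick $\alpha_0$ with $b_{\alpha_0}\ne 0$ and set $\tilde H(z):=z^{\alpha_0}$, so that $P_{\tilde H}(H)(o)=\alpha_0!\,b_{\alpha_0}\ne 0$. This forces $K^{\tilde H,p}_{I_\Omega(o)}(o)>0$, and the first assertion applied to $\tilde H$ then gives $K^{\tilde H,p}_\Omega(o)>0$, producing the required $g_N\in A^p(\Omega)$.

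The expected main obstacle is the step showing that the homogeneous components of $L^p$ functions on the possibly unbounded balanced domain $I_\Omega(o)$ are themselves $L^p$-integrable. This is non-automatic, and depends crucially on the rotation invariance of Lebesgue measure together with $p\ge 1$ through Minkowski's integral inequality; this is precisely the structural reason the corollary is stated only for $p\in[1,2]$ rather than on the full range $p\in(0,2]$ available in Theorem~\ref{thm-Kp}.
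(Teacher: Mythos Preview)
Your proof is correct and follows the same strategy as the paper: both derive the inequality by combining Theorems~\ref{thm-inf} and~\ref{thm-Kp}, and both deduce infinite-dimensionality of $A^p(\Omega)$ by finding homogeneous polynomials $H_j$ of unbounded degree with $K^{H_j,p}_{I_\Omega(o)}(o)>0$ and then invoking the first assertion. The paper simply asserts the existence of such $H_j$; you supply an explicit justification via the finite jet map, which is a welcome addition.

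One simplification: the Minkowski step, though correct, is unnecessary. Since $\tilde H=z^{\alpha_0}$ has degree $k$ and $f$ already vanishes to order $k$ at $o$, the function $f$ itself is an admissible test function for $K^{\tilde H,p}_{I_\Omega(o)}(o)$, and $P_{\tilde H}(f)(o)=D^{\alpha_0}f(o)=\alpha_0!\,b_{\alpha_0}\neq 0$; there is no need to pass to the homogeneous component $H$ or to show $H\in A^p(I_\Omega(o))$. Accordingly, your closing remark about the ``structural reason'' for the restriction $p\ge 1$ is off: the constraint $p\ge 1$ enters through Theorem~\ref{thm-inf} (whose Hahn--Banach proof requires $A^p$ to be a normed space), not through Minkowski's inequality.
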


\vspace{.1in} {\em Acknowledgements}. The first named author thanks Prof. Zhiwei Wang and Prof. Liyou Zhang for very helpful communications. The second named author was supported by National Key R\&D Program of China 2021YFA1003100, NSFC-11825101, NSFC-11522101 and NSFC-11431013.

\section{Definitions and basic properties}
Let $\Omega\Subset\mathbb{C}^n$ be a bounded domain. The $p$-Bergman space on $\Omega$ is defined to be
\[A^p(\Omega)\coloneqq \left\{ f\in\mathcal{O}(\Omega)\colon \|f\|_p^p=\|f\|^p_{L^p(\Omega)}\coloneqq \int_{\Omega}|f|^p<+\infty\right\}.\]
$A^p(\Omega)$ is a Banach space when $p\ge 1$, and a Hilbert space when $p=2$. For any compact set $S\subset\Omega$ and $p>0$, it can be deduced from the sub-mean value property of $|f|^p$ that 
\begin{align}
\label{eq1}
\sup_S |f|^p\le C_S \|f\|_p^p
\end{align}
for a constant $C_{S}$ and all $f\in A^p(\Omega)$  (see \cite{CZ12}). 

Recall that a Banach space $X$ is said to be \emph{strictly convex} if for any $x_1,x_2\in X$ satisfying that $\|x_1+x_2\|=2\|x_1\|=2\|x_2\|$, $x_1=x_2$ holds.

\begin{Lemma}[{see \cite{CZ12}}]
\label{lemma1}
$A^p(\Omega)$ is strictly convex for $p\ge 1$.
\end{Lemma}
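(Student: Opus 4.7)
My plan is to split into the cases $p>1$ and $p=1$, since classical $L^p$-theory already yields strict convexity of the ambient space $L^p(\Omega)$ when $p>1$, whereas $L^1(\Omega)$ itself is \emph{not} strictly convex and one must exploit the holomorphic rigidity of $A^1(\Omega)$. Throughout, I suppose $f_1,f_2\in A^p(\Omega)$ satisfy $\|f_1+f_2\|_p=2\|f_1\|_p=2\|f_2\|_p$. If this common value of $\|f_i\|_p$ is zero there is nothing to prove; otherwise I scale so that $\|f_1\|_p=\|f_2\|_p=1$ and $\|(f_1+f_2)/2\|_p=1$.

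\textbf{Case $p>1$.} I would use the pointwise strict convexity of $\phi(z)=|z|^p$ on $\mathbb{C}$: for $z_1,z_2\in\mathbb{C}$ one has $|(z_1+z_2)/2|^p\le (|z_1|^p+|z_2|^p)/2$, with equality only when $z_1=z_2$. Applied pointwise to $f_1,f_2$ and integrated over $\Omega$, this gives
\[
1=\Big\|\tfrac{f_1+f_2}{2}\Big\|_p^p\le \tfrac{1}{2}\bigl(\|f_1\|_p^p+\|f_2\|_p^p\bigr)=1,
\]
so the pointwise inequality must actually be an equality almost everywhere. Strict convexity then forces $f_1=f_2$ a.e., and two holomorphic functions agreeing on a set of positive measure coincide identically.

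\textbf{Case $p=1$.} Here $|z_1+z_2|=|z_1|+|z_2|$ characterises only $\overline{z_1}z_2\in[0,+\infty)$, so from $\|f_1+f_2\|_1=\|f_1\|_1+\|f_2\|_1$ I obtain merely $\overline{f_1(z)}\,f_2(z)\in[0,+\infty)$ for a.e.\ $z\in\Omega$. The holomorphic structure now does the work. Assuming $f_1\not\equiv 0$ (else both vanish), the zero set $Z(f_1)$ is a proper analytic subvariety of $\Omega$, hence Lebesgue-null of real codimension at least two, and $\Omega\setminus Z(f_1)$ is still a connected open set. On $\Omega\setminus Z(f_1)$ the ratio $h\coloneqq f_2/f_1$ is holomorphic, and the condition above reads $h(z)\in[0,+\infty)$ for a.e.\ $z$. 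By continuity $\operatorname{Im} h\equiv 0$ on $\Omega\setminus Z(f_1)$, and a holomorphic function that is real-valued on a connected open set is constant; thus $h\equiv c$ with $c\ge 0$, whence $f_2=cf_1$ on $\Omega\setminus Z(f_1)$ and, by continuity, on all of $\Omega$. The norm condition $\|f_2\|_1=c\|f_1\|_1=\|f_1\|_1>0$ pins $c=1$.

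\textbf{Main obstacle.} The only genuinely nontrivial point is the $p=1$ case: $L^1(\Omega)$ is not strictly convex, so the conclusion really does rely on holomorphy rather than pure functional analysis. The critical step is translating the a.e.\ pointwise condition $\overline{f_1}f_2\ge 0$ into an identity via the meromorphic quotient $f_2/f_1$ and then invoking the rigidity of holomorphic functions to upgrade ``real values a.e.'' into ``constant''. By contrast, the $p>1$ case is an immediate consequence of the strict convexity of $z\mapsto|z|^p$, which degenerates precisely at $p=1$.
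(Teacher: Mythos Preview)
Your proof is correct. The paper itself does not supply a proof of this lemma---it merely cites \cite{CZ12}---so there is no in-paper argument to compare against; your treatment (strict convexity of $|z|^p$ for $p>1$, and for $p=1$ the use of the holomorphic quotient $f_2/f_1$ being real-valued hence constant on the connected set $\Omega\setminus Z(f_1)$) is the standard approach and is essentially what appears in \cite{CZ12}.
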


Recall that (cf. \cite{BG1})
\[\ell_1^{(n)}\coloneqq \left\{\xi=(\xi_{\alpha})_{\alpha\in\mathbb{N}^n}\colon\sum_{\alpha\in\mathbb{N}^n}|\xi_{\alpha}|\rho^{|\alpha|}<+\infty \text{ for any} \ \rho>0\right\},\]
where $|\alpha|\coloneqq \sum_{j=1}^n\alpha_j$ for any $\alpha=(\alpha_1,\ldots,\alpha_n)\in\mathbb{N}^n$. Any element $\xi$ in $\ell_1^{(n)}$ is a linear functional on $\mathcal{O}_{z_0}$ for each $z_0\in\mathbb{C}^n$ defined as follows: for any $F(z)=\sum_{\alpha}a_{\alpha}(z-z_0)^{\alpha}\in\mathcal{O}_{z_0}$, 
\[(\xi\cdot F)(z_0)\coloneqq \sum_{\alpha\in\mathbb{N}^n}\xi_{\alpha}\frac{F^{(\alpha)}(z_0)}{\alpha !}=\sum_{\alpha\in\mathbb{N}^n}\xi_{\alpha}a_{\alpha},\]
which is well-defined (cf. \cite{BG1}).

In the following part, we fix a $\xi\in\ell_1^{(n)}$ satisfying $\xi\neq 0$, that is, there exists some $\alpha_0\in\mathbb{N}^n$ such that $\xi_{\alpha_0}\neq 0$.

We firstly recall some lemmas.

\begin{Lemma}[{see \cite[Lemma 2.2]{BG1}}]
\label{lemma2}
For any $f \in\mathcal{O}(\Omega)$, $(\xi\cdot f)\in \mathcal{O}(\Omega)$, that is, $(\xi\cdot f)(z)$ is holomorophic w.r.t. $z\in\Omega$.
\end{Lemma}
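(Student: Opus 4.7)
The goal is to show that the pointwise sum $(\xi\cdot f)(z_0) = \sum_{\alpha\in\mathbb{N}^n}\xi_\alpha f^{(\alpha)}(z_0)/\alpha!$ defines a holomorphic function of $z_0$ on $\Omega$. Since each partial derivative $f^{(\alpha)}$ is itself holomorphic on $\Omega$, the truncated sum
\[S_N(z)\coloneqq \sum_{|\alpha|\le N}\xi_\alpha \frac{f^{(\alpha)}(z)}{\alpha!}\]
is a finite $\mathbb{C}$-linear combination of holomorphic functions, hence holomorphic on $\Omega$. The plan is therefore to show that $S_N \to \xi\cdot f$ uniformly on compact subsets of $\Omega$, and then invoke the Weierstrass theorem that a locally uniform limit of holomorphic functions is holomorphic.

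To get uniform convergence, fix a compact set $K\subset\Omega$. Choose $\rho>0$ so small that the closed polydisc-neighbourhood $K_\rho\coloneqq\{w\in\mathbb{C}^n:\operatorname{dist}_\infty(w,K)\le\rho\}$ is still a compact subset of $\Omega$, and set $M\coloneqq\sup_{K_\rho}|f|<+\infty$. Cauchy's estimates on the polydisc of multi-radius $\rho$ centred at any $z\in K$ give
\[\left|\frac{f^{(\alpha)}(z)}{\alpha!}\right|\le \frac{M}{\rho^{|\alpha|}} \qquad (z\in K,\ \alpha\in\mathbb{N}^n),\]
so
\[\sup_{z\in K}\left|\xi_\alpha \frac{f^{(\alpha)}(z)}{\alpha!}\right|\le M\,|\xi_\alpha|\,\rho^{-|\alpha|}.\]
The definition of $\ell_1^{(n)}$ asserts that $\sum_{\alpha}|\xi_\alpha|\rho'^{\,|\alpha|}<+\infty$ for every $\rho'>0$; applying this with $\rho'=\rho^{-1}$ yields
\[\sum_{\alpha\in\mathbb{N}^n} M\,|\xi_\alpha|\,\rho^{-|\alpha|}<+\infty.\]
By the Weierstrass M-test the series defining $\xi\cdot f$ converges uniformly on $K$, and the limit is continuous. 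Since $K$ was arbitrary, $S_N\to\xi\cdot f$ locally uniformly on $\Omega$, and Weierstrass's theorem finishes the argument.

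The only step that requires a little care is the matching of the summability scale: the series expansion of $f$ at $z_0$ is controlled by \emph{positive} powers of the polydisc radius $\rho$, whereas Cauchy's estimates introduce \emph{negative} powers $\rho^{-|\alpha|}$. The condition defining $\ell_1^{(n)}$ is strong enough to absorb either, because it asks for convergence of $\sum|\xi_\alpha|\rho'^{\,|\alpha|}$ for \emph{every} $\rho'>0$, and in particular for $\rho'=1/\rho$. This is the single substantive point in the argument; everything else is the standard holomorphy-of-uniform-limits package.
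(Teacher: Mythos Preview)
Your argument is correct: Cauchy's estimates on a polydisc contained in $\Omega$ give the bound $|f^{(\alpha)}(z)/\alpha!|\le M\rho^{-|\alpha|}$ uniformly on any compact $K$, and the defining condition of $\ell_1^{(n)}$ with $\rho'=\rho^{-1}$ makes the Weierstrass $M$-test apply, so the partial sums converge locally uniformly and the limit is holomorphic. The paper does not supply its own proof of this lemma but simply cites \cite[Lemma~2.2]{BG1}; your proof is exactly the standard argument one expects to find there.
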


\begin{Lemma}[{see \cite[Lemma 2.5]{BG1}}]
\label{lemma3}
Let $\{f_j\}$ be a sequence of holomorphic functions on $\Omega$ which converge uniformly to a holomorphic function $f_0$ on every compact subset of $\Omega$. Then $\{(\xi\cdot f_j)\}$ converge uniformly to $(\xi\cdot f_0)$ on every compact subset of $\Omega$.
\end{Lemma}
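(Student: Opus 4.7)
The plan is to reduce the claim to a single quantitative estimate: on any compact set $K \subset \Omega$, the quantity $|(\xi \cdot g)(z)|$ is controlled by the supremum of $|g|$ on a slightly larger compact set, with constant depending only on $\xi$ and the enlargement. Applied to $g = f_j - f_0$, this immediately converts uniform convergence of $f_j$ on compacta into uniform convergence of $(\xi \cdot f_j)$ on compacta.

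To produce the estimate, I would first fix a compact $K \subset \Omega$ and pick $r > 0$ small enough that the closed polydisc-neighborhood
\[K_r \coloneqq \{w \in \mathbb{C}^n : \text{there is } z \in K \text{ with } |w_i - z_i| \le r \text{ for all } i=1,\ldots,n\}\]
is still contained in $\Omega$; this is possible by compactness of $K$ and openness of $\Omega$. Cauchy's integral formula on the product of circles $\{|w_i - z_i| = r\}$ then yields, for every holomorphic $g$ on $\Omega$, every $z \in K$, and every multi-index $\alpha \in \mathbb{N}^n$, the pointwise bound
\[\left|\frac{g^{(\alpha)}(z)}{\alpha!}\right| \le r^{-|\alpha|} \sup_{K_r}|g|.\]

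Next I would set $\rho = 1/r > 0$ and invoke the defining property of $\ell_1^{(n)}$: the constant $C \coloneqq \sum_{\alpha \in \mathbb{N}^n} |\xi_\alpha|\, r^{-|\alpha|}$ is finite. This finiteness both justifies the termwise summation in the definition of $(\xi \cdot g)(z) = \sum_\alpha \xi_\alpha g^{(\alpha)}(z)/\alpha!$ and, after summing the Cauchy estimate against $|\xi_\alpha|$, delivers
\[\sup_{z \in K}|(\xi \cdot g)(z)| \le C\, \sup_{K_r}|g|.\]
Applying this with $g = f_j - f_0$, the right-hand side tends to $0$ as $j \to \infty$ by the uniform convergence hypothesis on $K_r$, which proves the lemma.

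The argument is essentially routine and I do not anticipate any real obstacle; the only delicate point is recognizing that the summability condition built into $\ell_1^{(n)}$ (absolute summability of $|\xi_\alpha|\rho^{|\alpha|}$ for \emph{every} $\rho > 0$) is precisely tuned to absorb the factor $r^{-|\alpha|}$ produced by Cauchy's estimate, for an $r$ that may depend on $K$ but is independent of $j$.
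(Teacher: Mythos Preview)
Your proof is correct. The paper itself does not supply a proof of this lemma---it simply cites \cite[Lemma~2.5]{BG1}---so there is no in-paper argument to compare against directly. That said, your approach (Cauchy estimates on a polydisc neighborhood combined with the summability condition $\sum_{\alpha}|\xi_\alpha|\rho^{|\alpha|}<\infty$) is exactly the technique the paper deploys in its proof of Proposition~\ref{pro3}, where the same Cauchy bound \eqref{b2} and the same use of the $\ell_1^{(n)}$ condition appear, only with an $L^p$ norm in place of a sup norm. So your argument is fully in line with the paper's methodology.
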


Let $z_0\in\Omega$ be a fixed point, and denote
\[m_{\xi,p}(z_0)=m_{\xi,\Omega,p}(z_0)\coloneqq \inf\big\{\|f\|_p\colon f\in A^p(\Omega), \ (\xi\cdot f)(z_0)=1\big\}.\]

\begin{Proposition}
\label{pro1}
There exists at least one element $f_0\in A^{p}(\Omega)$ such that $(\xi\cdot f_0)(z_0)=1$ and $m_{\xi,p}(z_0)=\|f_0\|_p$.
\end{Proposition}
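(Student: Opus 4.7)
The plan is to use the direct method of the calculus of variations: extract a minimizer as a normal limit of a minimizing sequence and then use lower semicontinuity of the $L^p$ norm.

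First I would verify that the admissible set is nonempty, so that $m_{\xi,p}(z_0)$ is finite. Since $\Omega$ is bounded and $\xi_{\alpha_0}\neq 0$ for some $\alpha_0\in\mathbb{N}^n$, the polynomial $f(z)\coloneqq \xi_{\alpha_0}^{-1}(z-z_0)^{\alpha_0}$ lies in $A^p(\Omega)$ for every $p>0$, and a direct computation from the definition of $\xi\cdot F$ gives $(\xi\cdot f)(z_0)=1$. Hence $m_{\xi,p}(z_0)<+\infty$, and one can choose a minimizing sequence $\{f_j\}\subset A^p(\Omega)$ with $(\xi\cdot f_j)(z_0)=1$ and $\|f_j\|_p\to m_{\xi,p}(z_0)$.

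Next, because $\|f_j\|_p$ is bounded, the local sup estimate \eqref{eq1} applied to any compact exhaustion of $\Omega$ shows that $\{f_j\}$ is locally uniformly bounded on $\Omega$. Montel's theorem then produces a subsequence (which I still denote by $\{f_j\}$) converging uniformly on compact subsets of $\Omega$ to some holomorphic function $f_0\in\mathcal{O}(\Omega)$. Lemma \ref{lemma3} ensures $(\xi\cdot f_j)(z_0)\to(\xi\cdot f_0)(z_0)$, so the constraint passes to the limit: $(\xi\cdot f_0)(z_0)=1$.

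Finally, I would exhaust $\Omega$ by an increasing sequence of compact sets $K_\ell\nearrow \Omega$. Uniform convergence on each $K_\ell$ together with Fatou's lemma (applied to $|f_j|^p$) yields
\[\int_{\Omega}|f_0|^p\le \liminf_{j\to\infty}\int_{\Omega}|f_j|^p = m_{\xi,p}(z_0)^p,\]
so $f_0\in A^p(\Omega)$ and $\|f_0\|_p\le m_{\xi,p}(z_0)$. Since $f_0$ is admissible, the reverse inequality is automatic, giving $\|f_0\|_p=m_{\xi,p}(z_0)$.

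There is no substantial obstacle here: all ingredients (Montel, Fatou, continuity of $\xi\cdot$ under local uniform limits, and the local $L^p\to L^\infty$ estimate) are already available from Lemma \ref{lemma3} and \eqref{eq1}. The only point that requires a little care is nonemptiness of the admissible set, which is handled by exhibiting an explicit polynomial using the hypothesis $\xi_{\alpha_0}\neq 0$.
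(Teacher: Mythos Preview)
Your proof is correct and follows essentially the same approach as the paper: exhibit the explicit polynomial $\xi_{\alpha_0}^{-1}(z-z_0)^{\alpha_0}$ to show the admissible set is nonempty, take a minimizing sequence, extract a locally uniformly convergent subsequence via \eqref{eq1} and Montel, pass the constraint to the limit by Lemma~\ref{lemma3}, and conclude by Fatou's lemma. The only difference is cosmetic---you spell out the compact-exhaustion justification for Fatou a bit more than the paper does.
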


\begin{proof}
Note that $\left(\xi\cdot \frac{(z-z_0)^{\alpha_0}}{\xi_{\alpha_0}}\right)(z_0)=1$, and $\left\|\frac{(z-z_0)^{\alpha_0}}{\xi_{\alpha_0}}\right\|_p<+\infty$ as $\Omega$ is bounded. Thus, $m_{\xi,p}(z_0)<+\infty$.

Let $\{f_j\}$ be a sequence of elements in $A^p(\Omega)$ such that $(\xi\cdot f_j)(z_0)=1$ for every $j$ and $\|f_j\|_p\rightarrow m_{\xi,p}(z_0)$ as $j\rightarrow +\infty$.
Then \eqref{eq1} shows that $\{f_j\}$ forms a normal family on $\Omega$. Therefore, there exists a subsequence $\{f_{j_k}\}$ of $\{f_j\}$ such that $\{f_{j_k}\}$ converges uniformly to a function $f_0\in\mathcal{O}(\Omega)$ on every compact subset of $\Omega$. It follows from Lemma \ref{lemma3} that $(\xi\cdot f_0)(z_0)=1$. In addition, Fatou's Lemma implies
\[\int_{\Omega}|f_0|^p\le \lim_{k\rightarrow +\infty}\int_{\Omega}|f_{j_k}|^p=m_{\xi,p}(z_0)^p.\]
Therefore, $\|f_0\|_p=m_{\xi,p}(z_0)$.
\end{proof}

\begin{Proposition}
\label{pro6}
When $p\ge 1$, there exists a unique $f_0\in A^p(\Omega)$ which satisfies that $(\xi\cdot f_0)(z_0)=1$ and $m_{\xi,p}(z_0)=\|f_0\|_p$.
\end{Proposition}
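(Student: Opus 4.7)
The plan is to deduce uniqueness directly from three ingredients already in hand: the existence of a minimizer (Proposition \ref{pro1}), the linearity of the constraint $f\mapsto(\xi\cdot f)(z_0)$, and the strict convexity of $A^p(\Omega)$ for $p\ge 1$ (Lemma \ref{lemma1}). This is the classical uniqueness argument for norm-minimization under a linear constraint in a strictly convex Banach space.

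First I would take any two minimizers $f_0,g_0\in A^p(\Omega)$, i.e.\ elements with $(\xi\cdot f_0)(z_0)=(\xi\cdot g_0)(z_0)=1$ and $\|f_0\|_p=\|g_0\|_p=m_{\xi,p}(z_0)$; by Proposition \ref{pro1} such $f_0$ exists, and I only need to show any other $g_0$ must coincide with it. Setting $h\coloneqq(f_0+g_0)/2$, the linearity of $\xi$ as a functional on $\mathcal{O}_{z_0}$ (recalled just before Lemma \ref{lemma2}) gives $(\xi\cdot h)(z_0)=1$, so $h$ is admissible and $\|h\|_p\ge m_{\xi,p}(z_0)$. The triangle inequality (valid since $p\ge 1$) yields the reverse bound
\[
\|h\|_p\le \tfrac12\|f_0\|_p+\tfrac12\|g_0\|_p=m_{\xi,p}(z_0).
\]
Combining the two, $\|f_0+g_0\|_p=2m_{\xi,p}(z_0)=2\|f_0\|_p=2\|g_0\|_p$, so the equality case of the triangle inequality in the sense of strict convexity is attained. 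Applying Lemma \ref{lemma1} then forces $f_0=g_0$, which is the desired uniqueness.

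The only small point to verify is that $m_{\xi,p}(z_0)>0$, so that the strict convexity hypothesis $\|x_1+x_2\|=2\|x_1\|=2\|x_2\|$ is non-degenerate; but $m_{\xi,p}(z_0)=0$ would force $f_0\equiv 0$ and hence $(\xi\cdot f_0)(z_0)=0\neq 1$, a contradiction. There is no genuine obstacle in this proof: all the substantive content (the compactness-plus-Fatou existence argument and the strict convexity of $L^p$ restricted to holomorphic functions) has already been absorbed into Proposition \ref{pro1} and Lemma \ref{lemma1}, so the proposition reduces to a two-line application of these facts.
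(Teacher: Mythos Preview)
Your proof is correct and follows essentially the same route as the paper: average two minimizers, use linearity of the constraint to see the average is admissible, bound its norm from above and below by $m_{\xi,p}(z_0)$, and invoke Lemma \ref{lemma1}. The only cosmetic difference is that the paper obtains the upper bound via the pointwise convexity inequality $|\tfrac{f_1+f_2}{2}|^p\le\tfrac12(|f_1|^p+|f_2|^p)$ rather than the triangle inequality, and does not pause to note $m_{\xi,p}(z_0)>0$; your version is arguably cleaner.
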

\begin{proof}
Suppose $f_1, f_2$ are two elements in $A^p(\Omega)$ which satisfy $(\xi\cdot f_j)(z_0)=1$ and $m_{\xi,p}(z_0)=\|f_j\|_p$ for $j=1,2$. On the one hand, we have $\left(\xi\cdot \frac{f_1+f_2}{2}\right)(z_0)=1$, which implies $\left\|\frac{f_1+f_2}{2}\right\|_p\ge m_{\xi,p}(z_0)$ by the definition of $m_{\xi,p}(z_0)$. On the other hand, since $\|f_1\|_p=\|f_2\|_p=m_{\xi,p}(z_0)$, we can see
\[\left\|\frac{f_1+f_2}{2}\right\|^p_p=\int_{\Omega} \left|\frac{f_1+f_2}{2}\right|^p \le \int_{\Omega}\frac{|f_1|^p+|f_2|^p}{2}=m_{\xi,p}(z_0),\]
according to the convexity of the function $x^p$ on $(0,+\infty)$. Therefore, $\left\|\frac{f_1+f_2}{2}\right\|_p=m_{\xi,p}(z_0)$. Lemma \ref{lemma1} shows that $f_1=f_2$.
\end{proof}

For $p\ge 1$, denote by $m_{\xi,p}(\cdot,z_0)$ the unique element in $A^p(\Omega)$ which satisfies the conditions:
\[\big(\xi\cdot m_{\xi,p}(\cdot,z_0)\big)(z_0)=1 \quad \text{and} \quad m_{\xi,p}(z_0)=\|m_{\xi,p}(\cdot,z_0)\|_p.\]

Now we can define the $p$-Bergman kernel on $\Omega$ with respect to some $\xi\in\ell_1^{(n)}$.

\begin{Definition}
Let $\xi\in\ell_1^{(n)}$. For $p>0$, $K_{\xi,\Omega,p}(z)=K_{\xi,p}(z)\coloneqq m_{\xi,p}(z)^{-p}$ is defined to be the $p$-Bergman kernel with respect to $\xi$. 

For $p\ge 1$, $K_{\xi,\Omega,p}(\cdot, z)=K_{\xi,p}(\cdot, z)\coloneqq m_{\xi,p}(\cdot,z)m_{\xi,p}(z)^{-p}$ is defined to be the oﬀ-diagonal $p$-Bergman kernel with respect to $\xi$.
\end{Definition}

It is known that 
\[K_{p}(z)=\sup_{f\in A^p(\Omega)}\frac{|f(z)|^p}{\int_{\Omega}|f|^p},\]
where $K_{p}(z)$ denotes the $p$-Bergman kernel on $\Omega$. Similar result holds for the $p$-Bergman kernel with respect to $\xi$.

\begin{Proposition}
\label{pro2}
For any $p>0$,
\[K_{\xi,p}(z)=\sup_{f\in A^p(\Omega)}\frac{|(\xi\cdot f)(z)|^p}{\int_{\Omega}|f|^p}.\]

\end{Proposition}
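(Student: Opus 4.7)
The plan is to unwind the definitions and exploit the complex-linear homogeneity of both sides of the identity. Writing the claim as
\[m_{\xi,p}(z)^{-p}=\sup_{f\in A^p(\Omega)}\frac{|(\xi\cdot f)(z)|^p}{\int_{\Omega}|f|^p},\]
one sees that it simply expresses that optimizing $|(\xi\cdot f)(z)|^p/\|f\|_p^p$ is invariant under scaling $f\mapsto \lambda f$, so one may normalize. I will prove both inequalities separately.

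For the inequality $\sup\ge K_{\xi,p}(z)$, I would invoke Proposition \ref{pro1} to pick an extremal $f_0\in A^p(\Omega)$ with $(\xi\cdot f_0)(z)=1$ and $\|f_0\|_p=m_{\xi,p}(z)$; evaluating the ratio at $f_0$ produces exactly $m_{\xi,p}(z)^{-p}=K_{\xi,p}(z)$, which is a lower bound for the supremum.

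For the reverse inequality, I would take any $f\in A^p(\Omega)$. If $(\xi\cdot f)(z)=0$ the ratio vanishes and the bound is trivial. Otherwise, the function $g\coloneqq f/(\xi\cdot f)(z)$ satisfies $(\xi\cdot g)(z)=1$ by linearity of $\xi$, so $g$ is a competitor in the infimum defining $m_{\xi,p}(z)$, giving $\|g\|_p\ge m_{\xi,p}(z)$, i.e.\ $\|f\|_p\ge |(\xi\cdot f)(z)|\,m_{\xi,p}(z)$. Raising to the $p$-th power and rearranging yields $|(\xi\cdot f)(z)|^p/\|f\|_p^p\le m_{\xi,p}(z)^{-p}=K_{\xi,p}(z)$, hence the supremum is at most $K_{\xi,p}(z)$.

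There is essentially no obstacle: the argument is a one-line normalization backed up by the achievement of the minimum from Proposition \ref{pro1}. The only minor point worth flagging is that $m_{\xi,p}(z)$ is strictly positive (so that the reciprocal makes sense and the sup is finite), which follows from the estimate $|(\xi\cdot f)(z)|\le C\|f\|_p$ of Proposition \ref{pro3}; since both directions of the equivalence are trivial in the extended-real sense if $m_{\xi,p}(z)=0$, one may alternatively just record this observation in passing.
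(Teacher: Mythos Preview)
Your proof is correct and follows essentially the same route as the paper's own argument: invoke Proposition~\ref{pro1} to exhibit an extremal $f_0$ realizing $K_{\xi,p}(z)$ as a value of the ratio, and for the reverse inequality normalize an arbitrary $f$ with $(\xi\cdot f)(z)\neq 0$ so it becomes a competitor in the infimum defining $m_{\xi,p}(z)$. The only cosmetic difference is your added remark on the positivity of $m_{\xi,p}(z)$ via Proposition~\ref{pro3}, which the paper does not make explicit here (and which, as you note, is not strictly needed).
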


\begin{proof}
Let $f_0$ be an element of $A^p(\Omega)$ which satisfies $(\xi\cdot f_0)(z)=1$ and $m_{\xi,p}(z)=\|f_0\|_p$. Since $K_{\xi,p}(z)=m_{\xi,p}(z)^{-p}=\|f_0\|_p^{-p}=1/\int_{\Omega}|f_0|^p$, we have
\[K_{\xi,p}(z)= \frac{|(\xi\cdot f_0)(z)|^p}{\int_{\Omega}|f_0|^p}\le       \sup_{f\in A^p(\Omega)}\frac{|(\xi\cdot f)(z)|^p}{\int_{\Omega}|f|^p}.\]

On the other hand, let $f\in A^p(\Omega)$ be an element such that $(\xi\cdot f)(z)\neq 0$. Then 
\[\left(\xi\cdot\frac{f}{(\xi\cdot f)(z)}\right)(z)=1.\]
Then according to the definition of $m_{\xi,p}(z)$, we have
\[m_{\xi,p}(z)\le \left\| \frac{f}{(\xi\cdot f)(z)} \right\|_p=\frac{1}{|(\xi\cdot f)(z)|}\|f\|_p,\]
which implies that
\[\frac{|(\xi\cdot f)(z)|^p}{\int_{\Omega}|f|^p}=\frac{|(\xi\cdot f)(z)|^p}{\|f\|_p^p}\le m_{\xi,p}(z)^{-p}=K_{\xi,p}(z).\]
Since $f\in A^p(\Omega)$ is arbitrarily chosen with $(\xi\cdot f)(z)\neq 0$, we get
\[\sup_{f\in A^p(\Omega)}\frac{|(\xi\cdot f)(z)|^p}{\int_{\Omega}|f|^p}\le K_{\xi,p}(z).\]
Therefore, Proposition \ref{pro2} is proved.
\end{proof}

Inequality \eqref{eq1} can be generalized to Proposition \ref{pro3}, which plays a very important role in the theory of $p$-Bergman kernel with respect to $\xi$.

\begin{proof}[Proof of Proposition \ref{pro3}]
Let $r\coloneqq \inf_{z\in K}{\text{dist}(z,\partial \Omega)}$, and
\[U\coloneqq \Big\{z\in\Omega\colon \text{dist}(z,K)\le\frac{r}{2}\Big\}.\] 
Then for any $z\in K$, $\overline{B(z,\frac{r}{2})}\subset U$ and for any $z\in U$, $B(z,\frac{r}{2})\subset \Omega$. 

Since $\log|f|$ is a plurisubharmonic function for any holomorphic function $f$,  
\[|f(z)|^p\le\frac{1}{\frac{\pi^n}{n!}\left(\frac{r}{2}\right)^{2n}}\int_{B(z,\frac{r}{2})}|f|^p\le \frac{1}{\frac{\pi^n}{n!}\left(\frac{r}{2}\right)^{2n}}\|f\|_{L^p(\Omega)}^p\]
holds for any $p>0$ and $z\in U$. Therefore,
\begin{align}
\label{b1}
\sup_{U}|f|\le (n!)^\frac{1}{p}\left(\frac{4}{\pi r^2}\right)^{\frac{n}{p}}\|f\|_{L^p(\Omega)}.
\end{align}
For any $z=(z_1,\ldots,z_n)\in K$, Note that
\[\overline{B\left(z_1,\frac{r}{2\sqrt{n}}\right)}\times\cdots\times \overline{B\left(z_n,\frac{r}{2\sqrt{n}}\right)}\subset \overline{B\left(z,\frac{r}{2}\right)}\subset U.\]
It follows from Cauchy's estimate that 
\begin{align}
\label{b2}
\left|\frac{f^{(\alpha)}(z)}{\alpha!}\right|\le \left(\frac{2\sqrt{n}}{r}\right)^{|\alpha|}\sup_{U}|f|
\end{align}
for any $\alpha\in \mathbb{N}^n$ and $z\in K$. Then the inequalities \eqref{b1} and \eqref{b2} imply that
\begin{align}
\label{b3}
\left|\frac{f^{(\alpha)}(z)}{\alpha!}\right|&\le \left(\frac{2\sqrt{n}}{r}\right)^{|\alpha|}(n!)^\frac{1}{p}\left(\frac{4}{\pi r^2}\right)^{\frac{n}{p}}\|f\|_{L^p(\Omega)}
\end{align}
for any $\alpha\in \mathbb{N}^n$ and $z\in K$.

The definition of $\ell_1^{(n)}$ implies that
\[\sum_{\alpha\in\mathbb{N}^n}|\xi_{\alpha}|\left(\frac{2\sqrt{n}}{r}\right)^{|\alpha|}<C_p'\]
for a positive constant $C_p'$. Set $C_{K,p}=C_p'\cdot (n!)^\frac{1}{p}\left(\frac{4}{\pi r^2}\right)^{\frac{n}{p}}$, which is independent of $f$, and we get that
\[|(\xi\cdot f)(z)|\le C_{K,p}\|f\|_{L^p(\Omega)}\]
holds for all $z\in K$. Proposition \ref{pro3} is proved. 
\end{proof}

Next, we generalize some basic properties of the $p$-Bergman kernel to the $p$-Bergman kernel with respect to $\xi$.

\begin{Proposition}
\label{pro10}
$K_{\xi,p}(z)$ is bounded from below by a positive constant and locally bounded from above by a positive constant.
\end{Proposition}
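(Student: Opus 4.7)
The plan is to translate the two claims into matching bounds on $m_{\xi,p}(z)$, since $K_{\xi,p}(z)=m_{\xi,p}(z)^{-p}$. Concretely, the global lower bound for $K_{\xi,p}$ amounts to a global upper bound for $m_{\xi,p}$, and the local upper bound for $K_{\xi,p}$ amounts to a local positive lower bound for $m_{\xi,p}$.

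For the global upper bound on $m_{\xi,p}(z_0)$, I would exhibit an explicit admissible competitor. Fix an index $\alpha_0\in\mathbb{N}^n$ with $\xi_{\alpha_0}\neq 0$ and set
\[
f_{z_0}(w)\coloneqq\frac{(w-z_0)^{\alpha_0}}{\xi_{\alpha_0}}.
\]
Since $f_{z_0}^{(\alpha)}(z_0)=0$ for $\alpha\neq\alpha_0$ and $f_{z_0}^{(\alpha_0)}(z_0)=\alpha_0!/\xi_{\alpha_0}$, the series defining $\xi\cdot f_{z_0}$ collapses to a single term and $(\xi\cdot f_{z_0})(z_0)=1$. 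Because $\Omega$ is bounded, there is a uniform estimate
\[
\|f_{z_0}\|_p^p\leq \frac{(\mathrm{diam}\,\Omega)^{p|\alpha_0|}|\Omega|}{|\xi_{\alpha_0}|^p},
\]
independent of $z_0\in\Omega$. By the definition of $m_{\xi,p}(z_0)$ as an infimum, this forces $m_{\xi,p}(z_0)$ to be bounded above by a constant independent of $z_0$, giving the desired positive lower bound for $K_{\xi,p}$ on all of $\Omega$.

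For the local upper bound on $K_{\xi,p}$, I would apply Proposition \ref{pro3} directly. Given a compact set $K\subset\Omega$, any $f\in A^p(\Omega)$ with $(\xi\cdot f)(z)=1$ for some $z\in K$ satisfies
\[
1=|(\xi\cdot f)(z)|\leq C_{K,p}\|f\|_p,
\]
so $\|f\|_p\geq 1/C_{K,p}$. Taking the infimum over all admissible $f$ yields $m_{\xi,p}(z)\geq 1/C_{K,p}$ uniformly for $z\in K$, hence $K_{\xi,p}(z)\leq C_{K,p}^p$ on $K$.

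There is no real obstacle in this proof; the only content is that one needs the already-established Proposition \ref{pro3} to replace the sub-mean-value bound available in the classical $p$-Bergman setting, and one needs the hypothesis that some $\xi_{\alpha_0}\neq 0$ to produce the explicit monomial competitor. Boundedness of $\Omega$ then makes the competitor have $L^p$-norm uniformly controlled in the base point.
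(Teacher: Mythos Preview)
Your proof is correct and follows essentially the same route as the paper: the same monomial competitor $(w-z_0)^{\alpha_0}/\xi_{\alpha_0}$ gives the global lower bound, and Proposition~\ref{pro3} gives the local upper bound. The only cosmetic difference is that the paper phrases both bounds through the sup characterization of Proposition~\ref{pro2}, whereas you work directly with the infimum definition of $m_{\xi,p}$; the content is identical.
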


\begin{proof}
Let $R>0$ be the diameter of $\Omega$. Recall that $\xi_{\alpha_0}\neq 0$ for an $\alpha_0\in\mathbb{N}^n$ by assumption. For any $z_0\in\Omega$, we have $(\xi\cdot (z-z_0)^{\alpha_0})(z_0)=\xi_{\alpha_0}$. Then Proposition \ref{pro2} implies that
\[\frac{|\xi_{\alpha_0}|^p}{\int_{B(o,R)}|z^{\alpha_0}|^p}\le \frac{|\xi_{\alpha_0}|^p}{\int_{\Omega}|(z-z_0)^{\alpha_0}|^p}\le K_{\xi,p}(z_0).\]
Note that $\frac{|\xi_{\alpha_0}|^p}{\int_{B(o,R)}|z^{\alpha_0}|^p}$ is a positive constant independent of the choice of $z_0$. Therefore,
\begin{align}
\label{align3}
\frac{|\xi_{\alpha_0}|^p}{\int_{B(o,R)}|z^{\alpha_0}|^p}\le K_{\xi,p}(z)
\end{align}
for all $z\in\Omega$, i.e., $K_{\xi,p}(z)$ is uniformly bounded from below by a positive constant. 

On the other hand, for a compact subset $K$ of $\Omega$, Proposition  \ref{pro2} and Proposition \ref{pro3} imply that
\begin{align}
\label{align4}
K_{\xi,p}(z)\le C
\end{align}
for all $z\in K$, where $C$ is a positive constant depending on $p$ and $\text{dist}(K,\partial \Omega)$. 
\end{proof}

If in further $\xi_{\alpha}=0$ for all $\alpha$ with $|\alpha|>k_0$, then it can be deduced from the proof of Proposition \ref{pro3} that $C_p'= O \left(\frac{1}{r^{\frac{2n}{p}}}\right)$ and $C_p''= O \left(\frac{1}{r^{k_0}}\right)$, which implies
\begin{align}
\label{align5}
K_{\xi,p}(z)\le \frac{C_1}{\delta(z)^{2n+pk_0}}
\end{align}
for all $z\in\Omega$, where $C_1$ is a positive constant independent of $z$ and $\delta(z)=\text{dist}(z,\partial \Omega)$.

\begin{Proposition}
\label{pro4}
$K_{\xi,p}(z)$ is locally Lipschitz continuous for any $p>0$.
\end{Proposition}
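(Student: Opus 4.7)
The plan is to exploit the supremum characterization in Proposition \ref{pro2} together with the extremal function $f_0$ produced by Proposition \ref{pro1}, and to control the variation of $\xi \cdot f_0$ via Proposition \ref{pro3} and a Cauchy estimate.

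Fix a compact $K \subset \Omega$. First I would choose a slightly larger compact set $K' \subset \Omega$ and a radius $r > 0$ so that $\overline{B(z, r)} \subset K'$ for every $z \in K$. For any $z_1 \in K$, use Proposition \ref{pro1} to obtain an extremal $f_0 \in A^p(\Omega)$ with $(\xi \cdot f_0)(z_1) = 1$ and $\|f_0\|_p = m_{\xi,p}(z_1)$. Since $K_{\xi,p}$ is bounded below by a positive constant on $\Omega$ (Proposition \ref{pro10}), we have a uniform bound $\|f_0\|_p = m_{\xi,p}(z_1) \le M$ for all $z_1 \in K$, with $M$ independent of $z_1$.

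Next I would bound the Lipschitz constant of the holomorphic function $\xi \cdot f_0$ on $K$ uniformly in $z_1$. Proposition \ref{pro3} applied on $K'$ gives $\sup_{K'} |\xi \cdot f_0| \le C_{K',p} \|f_0\|_p \le C_{K',p} M$. Then the standard Cauchy estimate applied to the holomorphic function $\xi \cdot f_0$ on balls of radius $r/2$ centered at points of $K$ produces an $L > 0$, depending only on $K$, $K'$, $r$, $p$, and $\xi$, such that $|(\xi \cdot f_0)(z_2) - (\xi \cdot f_0)(z_1)| \le L |z_1 - z_2|$ for every $z_1, z_2 \in K$. In particular, if $|z_1 - z_2| \le 1/(2L)$, then $|(\xi \cdot f_0)(z_2)| \ge 1 - L|z_1 - z_2| \ge 1/2$.

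Plugging $f_0$ into the supremum formula of Proposition \ref{pro2} at $z_2$, I obtain
\[
K_{\xi,p}(z_2) \ge \frac{|(\xi \cdot f_0)(z_2)|^p}{\|f_0\|_p^p} \ge (1 - L|z_1 - z_2|)^p K_{\xi,p}(z_1).
\]
For $|z_1 - z_2|$ small, the elementary inequality $(1-t)^p \ge 1 - C_p t$ (valid for $p > 0$ and $t \in [0, 1/2]$ with $C_p$ depending only on $p$) yields
\[
K_{\xi,p}(z_1) - K_{\xi,p}(z_2) \le C_p L |z_1 - z_2|\, K_{\xi,p}(z_1) \le C_p L \bigl(\sup_K K_{\xi,p}\bigr) |z_1 - z_2|.
\]
Since $K_{\xi,p}$ is locally bounded from above by Proposition \ref{pro10}, the right-hand side is $\le C|z_1 - z_2|$ with $C$ depending only on $K$, $p$, $\xi$. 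Exchanging the roles of $z_1$ and $z_2$ yields the two-sided Lipschitz bound on the ball of small radius, which in turn gives a global Lipschitz bound on $K$ by a standard chaining argument. The small-scale restriction $|z_1-z_2|\le 1/(2L)$ is removed automatically by the upper bound on $K_{\xi,p}$ over $K$.

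The only step requiring care is obtaining the uniform Lipschitz control on $\xi \cdot f_0$: it hinges on the fact that $\xi \cdot f_0$ is holomorphic on $\Omega$ (Lemma \ref{lemma2}) combined with the uniform $\|f_0\|_p \le M$ estimate, so that Proposition \ref{pro3} and Cauchy's estimate yield a constant $L$ independent of the particular extremal chosen for $z_1$. Everything else is bookkeeping around the supremum characterization.
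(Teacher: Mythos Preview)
Your proof is correct and follows essentially the same approach as the paper: pick the extremal function at one point, use Proposition~\ref{pro3} together with Cauchy's estimate to get a uniform Lipschitz bound on $\xi\cdot f_0$, and feed this back into the supremum characterization of Proposition~\ref{pro2}. The only cosmetic difference is that the paper normalizes $\|f_0\|_p=1$ (so $(\xi\cdot f_0)(z_0)=K_{\xi,p}(z_0)^{1/p}$) and first proves that $K_{\xi,p}^{1/p}$ is Lipschitz, while you normalize $(\xi\cdot f_0)(z_1)=1$ and work with $K_{\xi,p}$ directly via the inequality $(1-t)^p\ge 1-C_p t$; both routes rely on Proposition~\ref{pro10} in the same way.
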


\begin{proof}
Let $K$ be a compact subset of $\Omega$. Choose an open set $U$ such that $K\subset U\Subset \Omega$. For any $z_0\in K$, By Proposition \ref{pro1}, there exists some $f_0\in A^p(\Omega)$ such that 
$(\xi\cdot f_{0})(z_0)=\big(K_{\xi,p}(z_0)\big)^{1/p}$ and $\|f_0\|_p=1$.

Proposition \ref{pro3} implies that 
\[\sup_{z\in \overline{U}} |(\xi\cdot f_0)(z)|\le C_p \]
for a constant $C_p>0$ independent of $z_0$ and $f_0$. Since $(\xi\cdot f_0)$ is a holomorphic function by Lemma \ref{lemma2}, we get from Cauchy's estimate that $(\xi\cdot f_0)^{(\alpha)}$ is uniformly bounded on $K$ for a fixed $\alpha\in\mathbb{N}^n$. In particular, there exists a constant $C_K>0$ independent of $z_0$ and $f_0$ such that
\begin{equation*}
    \begin{aligned}
        (K_{\xi,p}(z_0))^{\frac{1}{p}}=|(\xi\cdot f_0)(z_0)|&\le |(\xi\cdot f_0)(z)|+C_K|z-z_0|\\
        &\le (K_{\xi,p}(z))^{\frac{1}{p}}+C_K|z-z_0|
    \end{aligned}
\end{equation*}
for any $z\in K$. As $K_{\xi,p}(z)$ is uniformly bounded on $K$ by   \eqref{align4} and larger than a positive constant by \eqref{align3}, we conclude that $K_{\xi,p}(z)$ is Lipschitz continuous on $K$, which finishes the proof.
\end{proof}

The following results are about $K_{\xi,\Omega,p}$ as $\Omega$ varies.

It can be deduced from Proposition \ref{pro2} that
\begin{align}
\label{align6}
K_{\xi,\Omega,p}(z)&=\sup_{f\in A^p(\Omega)}\frac{|(\xi\cdot f)(z)|^p}{\int_{\Omega}|f|^p}\le  \sup_{f\in A^p(\Omega)}\frac{|(\xi\cdot f)(z)|^p}{\int_{\Omega'}|f|^p} \\                     
&\le \sup_{f\in A^p(\Omega')}\frac{|(\xi\cdot f)(z)|^p}{\int_{\Omega'}|f|^p}=K_{\xi,\Omega',p}(z)\notag
\end{align}
whenever $z\in\Omega'\subset\Omega$. On the other hand, we have
\begin{Proposition}\label{prop-exhaustion}
Let $\Omega_j\subset\Omega\subset\mathbb{C}^n$ be a sequence of bounded domains such that $\Omega_j\subset \Omega_{j+1}$ for $j\ge 1$ and $\bigcup_{j=1}^{+\infty}\Omega_j=\Omega$. Then 
\[\lim_{j\rightarrow +\infty}K_{\xi,\Omega_j,p}(z)=K_{\xi,\Omega,p}(z)\]
locally uniformly on $\Omega$.
\end{Proposition}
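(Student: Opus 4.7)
The plan is to combine a normal-family extraction of the minimizers with Dini's theorem: first establish pointwise convergence, then upgrade to local uniform convergence using monotonicity and continuity.

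From \eqref{align6}, for any $z\in\Omega$ and $j$ large enough that $z\in\Omega_j$ we have $K_{\xi,\Omega_j,p}(z)\ge K_{\xi,\Omega_{j+1},p}(z)\ge K_{\xi,\Omega,p}(z)$, so the pointwise limit exists and is at least $K_{\xi,\Omega,p}(z)$. Equivalently, $m_{\xi,\Omega_j,p}(z)\le m_{\xi,\Omega,p}(z)$ and is non-decreasing in $j$.

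For the reverse pointwise inequality, fix $z\in\Omega$ and, for each sufficiently large $j$, pick a minimizer $f_j\coloneqq m_{\xi,\Omega_j,p}(\cdot,z)\in A^p(\Omega_j)$ via Proposition \ref{pro1}, so that $(\xi\cdot f_j)(z)=1$ and $\|f_j\|_{p,\Omega_j}=m_{\xi,\Omega_j,p}(z)\le m_{\xi,\Omega,p}(z)$. The submean value property \eqref{eq1} applied on each $\Omega_j$ shows that $\{f_j\}$ is uniformly bounded on every compact $L\Subset\Omega$ (eventually $L\Subset\Omega_j$), hence forms a normal family on $\Omega$. Passing to a subsequence, $f_{j_k}\to f_0$ locally uniformly with $f_0\in\mathcal{O}(\Omega)$. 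Lemma \ref{lemma3} gives $(\xi\cdot f_0)(z)=\lim_k(\xi\cdot f_{j_k})(z)=1$. For any compact $L\Subset\Omega$, once $L\Subset\Omega_{j_k}$ we have $\int_L|f_{j_k}|^p\le m_{\xi,\Omega_{j_k},p}(z)^p$, and locally uniform convergence yields $\int_L|f_0|^p\le \lim_j m_{\xi,\Omega_j,p}(z)^p$. Exhausting $\Omega$ by such $L$ shows $f_0\in A^p(\Omega)$ with $\|f_0\|_{p,\Omega}\le \lim_j m_{\xi,\Omega_j,p}(z)$. By the definition of $m_{\xi,\Omega,p}(z)$ this forces $m_{\xi,\Omega,p}(z)\le \lim_j m_{\xi,\Omega_j,p}(z)$, and combined with the previous paragraph we obtain equality, i.e.\ $K_{\xi,\Omega_j,p}(z)\to K_{\xi,\Omega,p}(z)$ pointwise.

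Finally, to upgrade to local uniform convergence I would invoke Dini's theorem. Each $K_{\xi,\Omega_j,p}$ is continuous on $\Omega_j$ and the limit $K_{\xi,\Omega,p}$ is continuous on $\Omega$ by Proposition \ref{pro4}; on any compact $L\Subset\Omega$, for $j\ge j_0$ large the restrictions $K_{\xi,\Omega_j,p}|_L$ form a non-increasing sequence of continuous functions converging pointwise to the continuous limit $K_{\xi,\Omega,p}|_L$, so Dini's theorem gives uniform convergence on $L$. The only non-routine step is the compact exhaustion argument controlling $\|f_0\|_{p,\Omega}$ by $\lim_j m_{\xi,\Omega_j,p}(z)$; this is where the monotonicity $m_{\xi,\Omega_j,p}(z)\le m_{\xi,\Omega,p}(z)$ is essential to keep the sequence uniformly $L^p$-bounded, and all the rest reduces to normal families and Fatou-type passage to the limit.
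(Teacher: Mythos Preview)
Your proof is correct and follows essentially the same approach as the paper: monotonicity from \eqref{align6}, a normal-family extraction of minimizers with Fatou-type control to get pointwise convergence, and then continuity plus monotonicity for local uniformity. The only cosmetic differences are that the paper normalizes by $\|f_j\|_{p,\Omega_j}=1$ rather than $(\xi\cdot f_j)(z)=1$, and that you invoke Dini's theorem by name for the final step while the paper writes out an equivalent $\epsilon/3$ argument; one minor notational caution is that the symbol $m_{\xi,\Omega_j,p}(\cdot,z)$ is reserved for $p\ge 1$, so for general $p>0$ you should simply say ``pick a minimizer $f_j$'' via Proposition~\ref{pro1}.
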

\begin{proof}
For any $z_0\in\Omega$, we can choose $j_0$ such that $z_0\in \Omega_{j_0}$ whenever $j\ge j_0$.

By Proposition \ref{pro1}, we can choose an $f_j\in A^{p}(\Omega)$ for each $j\ge j_0$ such that
\[(\xi\cdot f_j)(z_0)=K_{\xi,\Omega_j,p}(z_0)^{1/p} \quad \text{and} \quad \|f_j\|_{L^p(\Omega_j)}=1.\]
It follows from \eqref{align6} that $\lim_{j\rightarrow +\infty}K_{\xi,\Omega_j,p}(z_0)$ exists.

According to Proposition \ref{pro3} and Montel's theorem, we can extract a subsequence $(j_k)$ of $(j)$ such that
$\{f_{j_k}\}$ converges locally uniformly to a holomorophic function $f_0$ on $\Omega$. Lemma \ref{lemma3} implies that 
\[(\xi\cdot f_0)(z_0)=\lim_{k\rightarrow +\infty}(\xi\cdot f_{j_k})(z_0)=\lim_{k\rightarrow +\infty}K_{\xi,\Omega_{j_k},p}(z_0)^{1/p}=\lim_{j\rightarrow +\infty}K_{\xi,\Omega_j,p}(z_0)^{1/p}.\]
On the other hand, Fatou's Lemma induces
\[\|f_0\|^p_{L^p(\Omega)}=\int_{\Omega}|f_0|^p\le \liminf_{k\rightarrow +\infty}\int_{\Omega_{j_k}}|f_{j_k}|^p=1.\]
Therefore, Proposition \ref{pro2} implies
\[K_{\xi,\Omega,p}(z_0)\ge |(\xi\cdot f_0)(z_0)|^p=\lim_{j\rightarrow +\infty}K_{\xi,\Omega_j,p}(z_0).\]
Combining this inequality with \eqref{align6}, we get
\[K_{\xi,\Omega,p}(z_0)=\lim_{j\rightarrow +\infty}K_{\xi,\Omega_j,p}(z_0).\]

Given arbitrary $\epsilon>0$, we choose $j_1$ such that 
\[K_{\xi,\Omega_{j_1},p}(z_0)\le K_{\xi,\Omega,p}(z_0)+\frac{\epsilon}{3}.\]
Choose a neighborhood $U_0$ of $z_0$ such that
\[|K_{\xi,\Omega_{j_1},p}(z)-K_{\xi,\Omega_{j_1},p}(z_0)|<\frac{\epsilon}{3}\]
and
\[|K_{\xi,\Omega,p}(z)-K_{\xi,\Omega,p}(z_0)|<\frac{\epsilon}{3}\]
for any $z\in U_0$. Then
\[K_{\xi,\Omega_{j_1},p}(z)\le K_{\xi,\Omega,p}(z)+\epsilon\]
for any $z\in U_0$.

Since $z_0$ is arbitrarily chosen and $K_{\xi,\Omega_{j},p}(z)$ is decreasing in $j$, we conclude that
\[\lim_{j\rightarrow +\infty}K_{\xi,\Omega_j,p}(z)=K_{\xi,\Omega,p}(z)\]
locally uniformly on $\Omega$.
\end{proof}

Now we consider the situation of product of domains. 

Let $m_1,m_2$ be two positive integers and let $\Omega_1\subset \mathbb{C}^{m_1}$, $\Omega_2\subset \mathbb{C}^{m_2}$ be two domains. For $j=1,2$, let $\xi^{(j)}\in \ell_1^{(m_j)}$. Let $\Omega_0\coloneqq \Omega_1\times\Omega_2\subset \mathbb{C}^{m_1+m_2}$, and $z_0\coloneqq (z_1,z_2)\in \Omega_1\times\Omega_2$. Let $\xi^{(0)}\in \ell_1^{(m_1+m_2)}$ satisfy $\xi^{(0)}_{(\alpha,\beta)}=\xi^{(1)}_{\alpha}\xi^{(2)}_{\beta}$ for all $\alpha\in\mathbb{N}^{m_1}$ and  $\beta\in\mathbb{N}^{m_2}$. We have the following

\begin{Proposition}
\label{pro5}
For any $p>0$,
\begin{align}
\label{align10}
m_{\xi^{(0)},\Omega_0,p}(z_0)=m_{\xi^{(1)},\Omega_1,p}(z_1)\cdot m_{\xi^{(2)},\Omega_2,p}(z_2),
\end{align}
and for any $p\ge 1$,
\begin{align}
\label{align11}
m_{\xi^{(0)},\Omega_0,p}(w_0,z_0)=m_{\xi^{(1)},\Omega_1,p}(w_1,z_1)\cdot m_{\xi^{(2)},\Omega_2,p}(w_2,z_2),
\end{align}
where $w_0=(w_1,w_2)\in \Omega_1\times\Omega_2$.
\end{Proposition}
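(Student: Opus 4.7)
The plan is to prove both identities by first establishing the on-diagonal equality \eqref{align10} via matching upper and lower bounds, then deducing the off-diagonal equality \eqref{align11} from uniqueness of minimizers.

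For the inequality $m_{\xi^{(0)},\Omega_0,p}(z_0)\le m_{\xi^{(1)},\Omega_1,p}(z_1)\cdot m_{\xi^{(2)},\Omega_2,p}(z_2)$, I would use the product of minimizers. By Proposition \ref{pro1}, pick $g_1\in A^p(\Omega_1)$ and $g_2\in A^p(\Omega_2)$ realizing the individual infima, i.e.\ $(\xi^{(j)}\cdot g_j)(z_j)=1$ and $\|g_j\|_p=m_{\xi^{(j)},\Omega_j,p}(z_j)$. Define $g(w_1,w_2):=g_1(w_1)g_2(w_2)$ on $\Omega_0$. The factorization hypothesis $\xi^{(0)}_{(\alpha,\beta)}=\xi^{(1)}_\alpha\xi^{(2)}_\beta$ gives
\[(\xi^{(0)}\cdot g)(z_0)=(\xi^{(1)}\cdot g_1)(z_1)\,(\xi^{(2)}\cdot g_2)(z_2)=1,\]
while Fubini yields $\|g\|_p^p=\|g_1\|_p^p\|g_2\|_p^p$, establishing the inequality.

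The main work is the reverse inequality, and this is the step I expect to be the main obstacle since one must extract the two-factor lower bound from an arbitrary holomorphic $f\in A^p(\Omega_0)$ with $(\xi^{(0)}\cdot f)(z_0)=1$. The idea is to slice and apply Proposition \ref{pro2} one variable at a time. For each $w_2\in\Omega_2$, the slice $f(\cdot,w_2)$ lies in $\mathcal O(\Omega_1)$, and I would define $h(w_2):=(\xi^{(1)}\cdot f(\cdot,w_2))(z_1)$. Holomorphicity of $h$ in $w_2$ follows from Lemma \ref{lemma2} (applied after identifying $\xi^{(1)}$ as a continuous functional via $\ell_1^{(m_1)}$ and pulling it inside the power series in $w_2$). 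The factorization of $\xi^{(0)}$ then gives $(\xi^{(2)}\cdot h)(z_2)=(\xi^{(0)}\cdot f)(z_0)=1$. By Proposition \ref{pro2} applied on $\Omega_1$,
\[|h(w_2)|^p\le K_{\xi^{(1)},\Omega_1,p}(z_1)\int_{\Omega_1}|f(\cdot,w_2)|^p,\]
and integrating in $w_2$ via Fubini gives $\|h\|_{L^p(\Omega_2)}^p\le K_{\xi^{(1)},\Omega_1,p}(z_1)\|f\|_{L^p(\Omega_0)}^p<\infty$, so $h\in A^p(\Omega_2)$. The defining property of $m_{\xi^{(2)},\Omega_2,p}(z_2)$ then yields $m_{\xi^{(2)},\Omega_2,p}(z_2)^p\le\|h\|_p^p\le K_{\xi^{(1)},\Omega_1,p}(z_1)\|f\|_p^p$, and rewriting via $K_{\xi^{(1)},\Omega_1,p}(z_1)=m_{\xi^{(1)},\Omega_1,p}(z_1)^{-p}$ produces exactly the desired lower bound. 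Taking the infimum over admissible $f$ completes \eqref{align10}.

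For \eqref{align11} with $p\ge 1$, I would apply \eqref{align10} to the product function $F(w_1,w_2):=m_{\xi^{(1)},\Omega_1,p}(w_1,z_1)\,m_{\xi^{(2)},\Omega_2,p}(w_2,z_2)$. By construction $(\xi^{(0)}\cdot F)(z_0)=1$, and Fubini together with \eqref{align10} gives
\[\|F\|_p=m_{\xi^{(1)},\Omega_1,p}(z_1)\cdot m_{\xi^{(2)},\Omega_2,p}(z_2)=m_{\xi^{(0)},\Omega_0,p}(z_0),\]
so $F$ realizes the infimum defining $m_{\xi^{(0)},\Omega_0,p}(\cdot,z_0)$. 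The uniqueness statement of Proposition \ref{pro6} then forces $m_{\xi^{(0)},\Omega_0,p}(\cdot,z_0)=F$ identically, and evaluating at $w_0=(w_1,w_2)$ yields \eqref{align11}. The holomorphicity/measurability checks in the Fubini step and the verification that $h$ is genuinely holomorphic (not merely separately holomorphic in $w_2$) are the only delicate points, but both reduce to uniform estimates provided by Proposition \ref{pro3}.
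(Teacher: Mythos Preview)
Your proposal is correct and follows essentially the same route as the paper's proof: both establish \eqref{align10} by bounding $m_{\xi^{(0)},\Omega_0,p}(z_0)$ above via the product of the individual minimizers and below via the slicing argument (apply $\xi^{(1)}$ in the first variable to get a holomorphic function of the second, then invoke Proposition~\ref{pro2} twice), and both deduce \eqref{align11} from the uniqueness in Proposition~\ref{pro6}. The only cosmetic difference is that the paper phrases the lower bound in terms of the kernels $K_{\xi^{(j)},\Omega_j,p}$ rather than the $m$'s, and is similarly brief about the holomorphicity of the sliced function.
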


\begin{proof}
By a change of coordinates, we can assume that $z_0=(o',o'')$, where $o'\in\Omega_1$ and $o''\in\Omega_2$.

Denote the coordinate of $\Omega_0$ by $(z',z'')$, where $z'$ is the coordinate of $\Omega_1$ and $z''$ is the coordinate of $\Omega_2$. Let $f_1\in A^p(\Omega_1)$ and $f_2\in A^p(\Omega_2)$ satisfying that 
\[(\xi^{(1)}\cdot f_1)(o')=1, \quad (\xi^{(2)}\cdot f_2)(o'')=1,\]
and
\[\|f_1\|_{L^p(\Omega_1)}=m_{\xi^{(1)},\Omega_1,p}(o'),\quad \|f_2\|_{L^p(\Omega_2)}=m_{\xi^{(2)},\Omega_2,p}(o'').\]
Suppose that we have the following Taylor's expansions
\[f_1(z')=\sum_{\alpha\in\mathbb{N}^{m_1}}a_{\alpha}(z')^{\alpha}\] 
near $o'$ and 
\[f_2(z'')=\sum_{\beta\in\mathbb{N}^{m_2}}b_{\beta}(z'')^{\beta}\]
near $o''$. Let $f(z',z'')\coloneqq f_1(z')f(z'')$, which is a holomorphic function on $\Omega_0$ such that
\[f(z',z'')=\sum_{\alpha\in\mathbb{N}^{m_1},\,\beta\in\mathbb{N}^{m_2}}a_{\alpha}b_{\beta}(z')^{\alpha}(z'')^{\beta}\]
near $(o',o'')$. Therefore,
\begin{align}
(\xi^{(0)}\cdot f)(o',o'')&=\sum_{\alpha\in\mathbb{N}^{m_1},\beta\in\mathbb{N}^{m_2}}a_{\alpha}b_{\beta}\xi^{(1)}_{\alpha}\xi^{(2)}_{\beta}\notag\\
&=\left(\sum_{\alpha\in\mathbb{N}^{m_1}}a_{\alpha}\xi^{(1)}_{\alpha}\right)\cdot\left(\sum_{\beta\in\mathbb{N}^{m_2}}b_{\beta}\xi^{(2)}_{\beta}\right)=1.\notag
\end{align}
Fubini's theorem shows that
\[\int_{\Omega_0}|f|^p=\int_{\Omega_1}|f_1|^p\int_{\Omega_2}|f_2|^p=m_{\xi^{(1)},\Omega_1,p}(z_1)^pm_{\xi^{(2)},\Omega_2,p}(z_2)^p,\]
yielding that
\[m_{\xi^{(0)},\Omega_0,p}(o',o'')\le m_{\xi^{(1)},\Omega_1,p}(o')\cdot m_{\xi^{(2)},\Omega_2,p}(o'').\]

Next, we prove
\begin{align}
\label{a2}
m_{\xi^{(0)},\Omega_0,p}(o',o'')\ge m_{\xi^{(1)},\Omega_1,p}(o')\cdot m_{\xi^{(2)},\Omega_2,p}(o'').
\end{align}
For any $h\in A^p(\Omega_0)$, set 
\[g(z'')=(\xi^{(1)}\cdot h(\cdot,z''))(o').\] 
By the definition, it can be seen that $g(z'')$ is a holomorphic function on $\Omega_2$. Suppose
\[h(z',z'')=\sum_{\alpha\in\mathbb{N}^{m_1},\beta\in\mathbb{N}^{m_2}}a_{\alpha,\beta}(z')^{\alpha}(z'')^{\beta}\]
near $(o',o'')$. Then
\[g(z'')=\sum_{\beta\in\mathbb{N}^{m_2}}\left(\sum_{\alpha\in\mathbb{N}^{m_1}}a_{\alpha,\beta}\xi^{(1)}_{\alpha}\right)(z'')^{\beta},\]
near $o''$, which implies that
\[(\xi^{(2)}\cdot g)(o'')=\sum_{\alpha\in\mathbb{N}^{m_1},\beta\in\mathbb{N}^{m_2}}a_{\alpha,\beta}\xi^{(1)}_{\alpha}\xi^{(2)}_{\beta}=(\xi^{(0)}\cdot h)(o',o'').\]
According to Proposition \ref{pro2}, we have
\[|(\xi^{(2)}\cdot g)(o'')|^p\le K_{\xi^{(2)},\Omega_2,p}(o'')\int_{\Omega_2}|g|^p,\]
and
\[|g(z'')|^p=|(\xi^{(1)}\cdot h(\cdot,z''))(o')|^p\le K_{\xi^{(1)},\Omega_1,p}(o')\int_{\Omega_1}|h(\cdot,z'')|^p,\]
for any $z''\in\Omega_2$. Hence,
\[|(\xi^{(0)}\cdot h)(o',o'')|^p\le K_{\xi^{(1)},\Omega_1,p}(o')\cdot K_{\xi^{(2)},\Omega_2,p}(o'')\int_{\Omega_0}|h|^p\]
for any $h\in A^p(\Omega_0)$, which implies that 
\[K_{\Omega_0,\xi^{(0)},p}(o',o'')\le K_{\xi^{(1)},\Omega_1,p}(o')\cdot K_{\xi^{(2)},\Omega_2,p}(o''),\]
and then \eqref{a2} follows from $K_{\xi,p}=m_{\xi,p}^{-p}$ for any $p$ and $\xi$.

Let $m(z,z_0)=m_{\xi^{(1)},\Omega_1,p}(w_1,z_1)\cdot m_{\xi^{(2)},\Omega_2,p}(w_2,z_2)$ for $p\ge 1$.
Since 
\[(\xi^{(0)}\cdot m(\cdot,z_0))(z_1,z_2)=1\]
and
\[\|m(\cdot,z_0)\|_{L^p(\Omega_0)}=m_{\xi^{(1)},\Omega_1,p}(z_1)\cdot m_{\xi^{(2)},\Omega_2,p}(z_2),\]
We get that \eqref{align11} holds by Proposition \ref{pro6}.
\end{proof}

When $p=2$, a weighted version of the above proposition was presented in \cite{bgy}.

In the following part of this section, we always assume that $p\ge 1$. 

\begin{Lemma}
\label{lemma5}
For any $z_0\in\Omega$ and $f\in A^p(\Omega)$ with $(\xi\cdot f)(z_0)=0$, we have
\[\int_{\Omega}|m_{\xi,p}(\cdot,z_0)|^{p-2}\overline{m_{\xi,p}(\cdot,z_0)}f=0.\]
\end{Lemma}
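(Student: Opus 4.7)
The plan is a standard variational argument: since $m\coloneqq m_{\xi,p}(\cdot,z_0)$ is the $L^p$-minimizer in the affine subspace $\{g\in A^p(\Omega)\colon (\xi\cdot g)(z_0)=1\}$, and because $(\xi\cdot f)(z_0)=0$ means $m+tf$ stays in this subspace for every $t\in\mathbb{C}$, the function
\[
\phi(t)\coloneqq \int_{\Omega}|m+tf|^{p}
\]
must have a global minimum at $t=0$. The goal is then to compute the partial derivatives of $\phi$ at $0$ in the real and imaginary directions and show they give the real and imaginary parts of the stated integral.

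First I would verify admissibility: by Lemma \ref{lemma3} (or by direct inspection of the definition of $\xi\cdot$) the functional $\xi\cdot(\,\cdot\,)(z_0)$ is $\mathbb{C}$-linear on $A^p(\Omega)$, so $(\xi\cdot(m+tf))(z_0)=1+t\cdot 0=1$; and $m+tf\in A^p(\Omega)$ because $A^p(\Omega)$ is a vector space. Hence $\phi(t)\ge \phi(0)$ for all $t\in\mathbb{C}$.

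Next, I would differentiate $\phi$ at $t=0$. Writing $|m+tf|^{p}=\bigl((m+tf)\overline{(m+tf)}\bigr)^{p/2}$ and treating $t=s+ir$ with $s,r\in\mathbb{R}$, the pointwise derivative in $s$ at $t=0$ (away from $\{m=0\}$) is
\[
\frac{\partial}{\partial s}\bigg|_{t=0}|m+tf|^{p}=\frac{p}{2}|m|^{p-2}\bigl(\overline{m}f+m\overline{f}\bigr)=p\,\re\bigl(|m|^{p-2}\overline{m}f\bigr),
\]
and similarly $\partial_r|_{t=0}|m+tf|^{p}=-p\,\im\bigl(|m|^{p-2}\overline{m}f\bigr)$. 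To pass the derivative under the integral I would restrict to $|t|\le 1$ and use the elementary bound $\bigl||m+tf|^{p}-|m|^{p}\bigr|\le C_p|t|\bigl(|m|+|f|\bigr)^{p-1}|f|$, which is integrable on $\Omega$ by Hölder's inequality since $m,f\in L^p(\Omega)$; dominated convergence then yields
\[
\phi'_s(0)=p\int_{\Omega}\re\bigl(|m|^{p-2}\overline{m}f\bigr),\qquad \phi'_r(0)=-p\int_{\Omega}\im\bigl(|m|^{p-2}\overline{m}f\bigr),
\]
with the convention that $|m|^{p-2}\overline{m}$ is extended by $0$ on $\{m=0\}$. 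Both vanish since $\phi$ attains its minimum at $t=0$, and combining the real and imaginary equations gives the claim.

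The main subtlety is justifying the differentiation under the integral when $1\le p<2$, because $|m|^{p-2}$ blows up where $m$ vanishes. For $p>1$, the uniform bound above is still integrable (the exponent $p-1>0$ keeps the dominating function in $L^{p/(p-1)}$), and the zero set of the holomorphic function $m$ has measure zero, so the convention $|m|^{p-2}\overline{m}=0$ on $\{m=0\}$ causes no issue. For $p=1$ the integrand $|m|^{-1}\overline{m}$ is bounded in modulus by $1$ on $\{m\neq 0\}$, so the integral is automatically finite, but differentiability of the integrand fails on $\{m=0\}$; here one argues that $\{m=0\}$ has Lebesgue measure zero and uses a one-sided difference quotient together with dominated convergence (with dominating function $|f|$) to recover the same formula. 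Once this technical point is handled, the conclusion follows directly from $\phi'_s(0)=\phi'_r(0)=0$.
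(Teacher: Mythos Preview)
Your proposal is correct and follows essentially the same variational argument as the paper's proof. The only cosmetic difference is that the paper computes the single Wirtinger derivative $\partial_t\|f_t\|_p^p\big|_{t=0}$ (which, for a real-valued function, encodes both real partial derivatives at once), whereas you split into $\partial_s$ and $\partial_r$; both proofs use the same dominating function $(|m|+|f|)^{p-1}|f|$ and the same H\"older argument for $p>1$, with the trivial bound for $p=1$.
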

\begin{proof}
For $t\in \Delta\coloneqq \{z\in\mathbb{C}\colon|z|<1\}$, set $f_t\coloneqq tf+m_{\xi,p}(\cdot,z_0)$. Note that $f_t$ is a holomorphic function on $\Omega$ satisfying $(\xi\cdot f_t)(z_0)=1$ for any such $t$.

For any $t\in\Delta$, rewrite 
\[|f_t|^p=\left(|m_{\xi,p}(\cdot,z_0)|^2+tf\overline{m_{\xi,p}(\cdot,z_0)}+\overline{tf}m_{\xi,p}(\cdot,z_0)+tf\overline{tf}\right)^\frac{p}{2}.\]
Therefore,
\[\frac{\partial |f_t|^p}{\partial t}=\frac{p}{2}|f_t|^{p-2}(f\overline{m_{\xi,p}(\cdot,z_0)}+f\overline{tf})=\frac{p}{2}|f_t|^{p-2}f\overline{f_t}\]
outside a zero measure set $f_t^{-1}(0)\subset\Omega$. We get
\[\left|\frac{\partial |f_t|^p}{\partial t}\right|\le\frac{p}{2}(|f|+|m_{\xi,p}(\cdot,z_0)|)^{p-1}|f|\]
for every $t\in\Delta$. Note that H\"{o}lder inequality implies
\begin{align}
\label{align7}
\int_{\Omega}(|f|+|m_{\xi,p}(\cdot,z_0)|)^{p-1}|f|\le \||f|+|m_{\xi,p}(\cdot,z_0)|\|_p^{p-1}\|f\|_p<+\infty
\end{align}
when $p>1$, and inequality \eqref{align7} obviously holds for $p=1$. Since $\|f_t\|_p$ takes its minimal value at $t=0$ by the definition of $m_{\xi,p}(\cdot,z_0)$, we conclude from \eqref{align7} and the dominated convergence theorem that
\[0=\frac{\partial \|f_t\|_p^p}{\partial t}\bigg|_{t=0}=\int_{\Omega}\left(\frac{\partial |f_t|^p}{\partial t}\right)\bigg|_{t=0}=\frac{p}{2}\int_{\Omega}|m_{\xi,p}(\cdot,z_0)|^{p-2}\overline{m_{\xi,p}(\cdot,z_0)}f.\]
\end{proof}

When $\xi=(1,0,\ldots,0,\ldots)$, the above result was obtained in \cite{CZ12}. Finally, we prove Theorem \ref{thm1}.

\begin{proof}[Proof of Theorem \ref{thm1}]
Recall that $(\xi\cdot (z-z_0)^{\alpha_0})(z_0)=\xi_{\alpha_0}\neq0$. For any $f\in A^p(\Omega)$, 
\[f(z)-\frac{(\xi\cdot f)(z_0)}{\xi_{\alpha_0}}(z-z_0)^{\alpha_0}\in A^p(\Omega),\]
and
\[\left(\xi\cdot\Big(f(z)-\frac{(\xi\cdot f)(z_0)}{\xi_{\alpha_0}}(z-z_0)^{\alpha_0}\Big)\right)(z_0)=0.\]
Lemma \ref{lemma5} implies that
\[\int_{\Omega}|m_{\xi,p}(\cdot,z_0)|^{p-2}\overline{m_{\xi,p}(\cdot,z_0)}\left(f-\frac{(\xi\cdot f)(z_0)}{\xi_{\alpha_0}}(\cdot-z_0)^{\alpha_0}\right)=0,\]
which can be rewritten as
\begin{align}
\label{align8}
\int_{\Omega}|m_{\xi,p}(\cdot,z_0)|^{p-2}\overline{m_{\xi,p}(\cdot,z_0)}f=\frac{(\xi\cdot f)(z_0)}{\xi_{\alpha_0}}\int_{\Omega}|m_{\xi,p}(\cdot,z_0)|^{p-2}\overline{m_{\xi,p}(\cdot,z_0)}(\cdot-z_0)^{\alpha_0}.
\end{align}
Choose $f=m_{\xi,p}(\cdot,z_0)$. Since $(\xi\cdot m_{\xi,p}(\cdot,z_0))(z_0)=1$, we then get
\begin{align}
\label{align9}
m_{\xi,p}(z_0)^p=\frac{1}{\xi_{\alpha_0}}\int_{\Omega}|m_{\xi,p}(\cdot,z_0)|^{p-2}\overline{m_{\xi,p}(\cdot,z_0)}(\cdot-z_0)^{\alpha_0}.
\end{align}
Combining \eqref{align8} and \eqref{align9}, we get Theorem \ref{thm1}.
\end{proof}

\section{Some regularity results}
Regularity results for the $p$-Bergman kernel were established and developed systematically in \cite{CZ12,chen-xiong1,li,chen-xiong2}. In this section, we mainly generalize these results to the $p$-Bergman kernel with respect to $\xi$.
We fix a $\xi\in\ell_1^{(n)}$ satisfying that there exists an $\alpha_0\in\mathbb{N}^n$ such that $\xi_{\alpha_0}\neq 0$, and $\Omega\Subset\mathbb{C}^n$ denotes a bounded domain. 

The following inequalities arising from the nonlinear analysis of the $p$-Laplacian play a key role in the proof of the regularity results for the $p$-Bergman kernel.

\begin{Lemma}(see \ \cite{lind},\ $\S$12; see also \cite{CZ12})
For any $a,b\in\mathbb{C}$,
\begin{align}
\label{al1}
\rm{Re}\mathnormal{{\{(|b|^{p-\rm{2}}\bar{b}-|a|^{p-\rm2}\bar{a})(b-a)\}}\ge \frac{\rm1}{\rm2}(|b|^{p-\rm2}+|a|^{p-\rm2})|b-a|^{\rm2},\quad p\ge\rm2};
\end{align}
\begin{align}
\label{al2}
&\rm{Re}\mathnormal{{\{(|b|^{p-\rm{2}}\bar{b}-|a|^{p-\rm2}\bar{a})(b-a)\}}}\ge\\
&(p-\rm1)\mathnormal{|b-a|}^{\rm2}(\mathnormal{|a|+|b|})^{\mathnormal{p}-\rm2}+(\rm2-\mathnormal{p})|\rm{Im}(\mathnormal{a\bar{b}})|^{\rm2}(\mathnormal{|a|+|b|})^{\mathnormal{p}-4}, \quad 1\le \mathnormal{p}\le 2;\notag
\end{align}
and
\begin{align}
\label{al3}
\mathnormal{|a|^p\le |b|^p+p\rm{Re}}\mathnormal{\{|a|^{p-\rm2}\bar{a}(a-b)\}, \quad p\ge\rm1}. 
\end{align}
\end{Lemma}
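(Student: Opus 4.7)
The strategy is to identify $\mathbb{C}\cong\mathbb{R}^2$ and treat $F(z):=|z|^p/p$ as a convex potential whose real gradient equals $\nabla F(z)=|z|^{p-2}z$. Under the pairing $\langle V,W\rangle:=\re(\bar VW)$ this gradient corresponds to $|z|^{p-2}\bar z$, so each of the three inequalities is a quantitative statement about $\nabla F$. Inequality \eqref{al3} is simply the supporting-hyperplane characterization of convexity, $F(a)\le F(b)+\langle\nabla F(a),a-b\rangle$, which holds for $p\ge 1$; multiplying by $p$ recovers the claim. Inequality \eqref{al1} admits a short algebraic proof: expanding both sides collapses their difference to
\[
\mathrm{LHS}-\mathrm{RHS}=\tfrac{1}{2}\bigl(|b|^{p-2}-|a|^{p-2}\bigr)\bigl(|b|^2-|a|^2\bigr),
\]
which is non-negative for $p\ge 2$ because the two factors share the same sign.

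The delicate inequality is \eqref{al2}. Writing $a_t:=a+t(b-a)$ and $V:=b-a$, the fundamental theorem of calculus gives
\[
\mathrm{LHS}=\bigl\langle\nabla F(b)-\nabla F(a),\,V\bigr\rangle=\int_0^1 V^{T} D^{2}F(a_t)V\,dt,
\]
and a direct Hessian computation yields the algebraic identity
\[
V^{T} D^{2}F(z)V=(p-1)|z|^{p-2}|V|^{2}+(2-p)|z|^{p-4}\bigl(|z|^{2}|V|^{2}-\langle z,V\rangle^{2}\bigr).
\]
The key observation is that $\overline{a_t}\,V=\bar aV+t|V|^{2}$, whose added term is real; hence $\mathrm{Im}(\overline{a_t}V)=\mathrm{Im}(\bar ab)$ is \emph{independent of} $t$ along the segment, which gives $|a_t|^{2}|V|^{2}-\langle a_t,V\rangle^{2}\equiv|\mathrm{Im}(\bar ab)|^{2}$ and lets that quantity be pulled outside the integral. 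For $1\le p\le 2$ both exponents $p-2$ and $p-4$ are non-positive, so the trivial bound $|a_t|\le|a|+|b|$ reverses to $|a_t|^{p-2}\ge(|a|+|b|)^{p-2}$ and $|a_t|^{p-4}\ge(|a|+|b|)^{p-4}$; integrating the resulting pointwise lower bound on $V^{T}D^{2}F(a_t)V$ produces exactly the right-hand side of \eqref{al2}.

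The main obstacle is the bookkeeping in \eqref{al2}: one has to spot the $t$-invariance of $\mathrm{Im}(\overline{a_t}V)$, which is what separates the ``angular'' contribution from the ``radial'' contribution and produces the two distinct terms in the bound. Once that identity is in hand, the Hessian quadratic form factors cleanly into a radial piece times $|V|^{2}$ and an angular piece times $|\mathrm{Im}(\bar ab)|^{2}$, and the inequality follows by a uniform pointwise estimate on the Hessian followed by a trivial integration in $t$. By comparison, the arguments for \eqref{al1} and \eqref{al3} are essentially one-liners.
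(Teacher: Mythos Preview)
The paper does not prove this lemma; it cites it from Lindqvist's notes on the $p$-Laplacian and from \cite{CZ12}, so there is no ``paper's own proof'' to compare against. Your argument is nevertheless correct and self-contained.

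Your treatment of \eqref{al3} via the convexity of $F(z)=|z|^{p}/p$ and of \eqref{al1} via the algebraic identity
\[
\mathrm{LHS}-\mathrm{RHS}=\tfrac12\bigl(|b|^{p-2}-|a|^{p-2}\bigr)\bigl(|b|^{2}-|a|^{2}\bigr)\ge 0\quad(p\ge 2)
\]
is clean. For \eqref{al2}, the Hessian decomposition
\[
V^{T}D^{2}F(z)V=(p-1)|z|^{p-2}|V|^{2}+(2-p)|z|^{p-4}\bigl(|z|^{2}|V|^{2}-\langle z,V\rangle^{2}\bigr)
\]
together with the observation that $\mathrm{Im}(\overline{a_t}\,V)=\mathrm{Im}(\bar a b)$ is constant along the segment is exactly the right device; it lets the angular term $|z|^{2}|V|^{2}-\langle z,V\rangle^{2}=|\mathrm{Im}(\bar zV)|^{2}$ be pulled outside the $t$-integral, after which $|a_t|\le|a|+|b|$ and the negativity of the exponents $p-2,\,p-4$ finish the job.

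The only point worth flagging is the edge case where the segment $a_t$ passes through the origin (so $D^{2}F$ is singular). This is harmless: for $1<p\le 2$ the singularity $|a_t|^{p-2}$ is integrable since $p-2>-1$, while for $p=1$ the coefficient $(p-1)$ kills the first term and collinearity of $a,b$ with the origin forces $\mathrm{Im}(\bar ab)=0$, so the right-hand side vanishes and the inequality reduces to $\mathrm{LHS}\ge 0$, which follows directly from monotonicity of $\nabla F$. You might add a sentence to cover this, but it does not affect the substance of the proof.
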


Define
\[H_{\xi,p}(z,w)\coloneqq K_{\xi,p}(z)+K_{\xi,p}(w)-\re \{(\xi\cdot K_{\xi,p}(\cdot,w))(z)+(\xi\cdot K_{\xi,p}(\cdot,z))(w)\}.\]

Choosing $a=m_{\xi,p}(\cdot,w)$ and $b=m_{\xi,p}(\cdot,z)$ in \eqref{al1} and \eqref{al2}, integrations over $\Omega$ and Theorem \ref{thm1} yield the following results.
\begin{Lemma}
\begin{align}
\label{al4}
\int_{\Omega}\frac{|\mathrm{Im}\,(\overline{m_{\xi,1}(\cdot,z)}m_{\xi,1}(\cdot,w))|^2}{(|m_{\xi,1}(\cdot,z)|+|m_{\xi,1}(\cdot,w)|)^3}&\le \frac{1}{K_{\xi,1}(z)K_{\xi,1}(w)}H_{\xi,1}(z,w);\\
\label{al5}
\int_{\Omega} (|m_{\xi,p}(\cdot,z)|+|m_{\xi,p}(\cdot,w)|)^{p-2}&|m_{\xi,p}(\cdot,z)-m_{\xi,p}(\cdot,w)|^2\\
&\le \frac{1}{(p-1)K_{\xi,p}(z)K_{\xi,p}(w)}H_{\xi,p}(z,w), \ \ 1<p\le 2;\notag\\
\label{al6}
\int_{\Omega} (|m_{\xi,p}(\cdot,z)|^{p-2}+|m_{\xi,p}(\cdot,w)|^{p-2})&|m_{\xi,p}(\cdot,z)-m_{\xi,p}(\cdot,w)|^2\\
&\le \frac{2}{K_{\xi,p}(z)K_{\xi,p}(w)}H_{\xi,p}(z,w), \ \ p> 2.     \notag  
\end{align}
\end{Lemma}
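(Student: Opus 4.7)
The plan is to apply the three pointwise inequalities \eqref{al1}, \eqref{al2} with the specific choices $a=m_{\xi,p}(\cdot,w)$ and $b=m_{\xi,p}(\cdot,z)$, integrate over $\Omega$, and then invoke Theorem \ref{thm1} to identify the integral of $\mathrm{Re}\{(|b|^{p-2}\bar b-|a|^{p-2}\bar a)(b-a)\}$ with $H_{\xi,p}(z,w)/(K_{\xi,p}(z)K_{\xi,p}(w))$. Once that identification is in place, \eqref{al4}, \eqref{al5}, \eqref{al6} drop out by keeping only the relevant non-negative term on the left of \eqref{al1} or \eqref{al2}.

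Concretely, I would expand
\[
\int_{\Omega}(|b|^{p-2}\bar b-|a|^{p-2}\bar a)(b-a) = \int_{\Omega}|b|^p+\int_{\Omega}|a|^p-\int_{\Omega}|b|^{p-2}\bar b\,a-\int_{\Omega}|a|^{p-2}\bar a\,b.
\]
The first two integrals equal $m_{\xi,p}(z)^p=K_{\xi,p}(z)^{-1}$ and $K_{\xi,p}(w)^{-1}$ by the very definitions of $m_{\xi,p}$ and $K_{\xi,p}$. For each of the remaining two I would apply Theorem \ref{thm1}: with $z_0=z$ and $f=m_{\xi,p}(\cdot,w)$ one gets
\[
\int_{\Omega}|m_{\xi,p}(\cdot,z)|^{p-2}\overline{m_{\xi,p}(\cdot,z)}\,m_{\xi,p}(\cdot,w)=m_{\xi,p}(z)^p\,(\xi\cdot m_{\xi,p}(\cdot,w))(z),
\]
and symmetrically for $z_0=w$. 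Using $K_{\xi,p}(\cdot,w)=m_{\xi,p}(\cdot,w)K_{\xi,p}(w)$ (from the definition of the off-diagonal kernel) to rewrite $\xi\cdot m_{\xi,p}(z,w)=\xi\cdot K_{\xi,p}(z,w)/K_{\xi,p}(w)$, and taking real parts, the four terms assemble into exactly
\[
\mathrm{Re}\int_{\Omega}(|b|^{p-2}\bar b-|a|^{p-2}\bar a)(b-a)=\frac{H_{\xi,p}(z,w)}{K_{\xi,p}(z)K_{\xi,p}(w)},
\]
matching the definition of $H_{\xi,p}$ announced just above the Lemma.

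With this identity in hand, the three inequalities follow immediately. Integrating \eqref{al1} for $p>2$ yields \eqref{al6} (the factor $\tfrac12$ on the left becomes the $2$ on the right). For $1<p\le 2$, integrating \eqref{al2} and dropping the non-negative $(2-p)|\mathrm{Im}(a\bar b)|^2(|a|+|b|)^{p-4}$ term yields \eqref{al5}. For $p=1$, the $(p-1)$-coefficient in \eqref{al2} vanishes and only the $\mathrm{Im}(a\bar b)$-term survives, yielding \eqref{al4}.

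The main obstacle I anticipate is neither inequality nor algebraic; it is the bookkeeping in correctly identifying the cross terms with $\xi\cdot K_{\xi,p}(z,w)$ and $\xi\cdot K_{\xi,p}(w,z)$ via Theorem \ref{thm1}, paying attention to the conjugation and to the factor $K_{\xi,p}(w)$ coming from the normalization $K_{\xi,p}(\cdot,w)=m_{\xi,p}(\cdot,w)K_{\xi,p}(w)$. A secondary technicality is checking that each of the four integrals in the expansion is finite so the decomposition is legitimate; this follows from H\"older's inequality together with $m_{\xi,p}(\cdot,z), m_{\xi,p}(\cdot,w)\in A^p(\Omega)$. In the $p=1$ case one should also briefly note that the integrand in \eqref{al4} is well-defined off the measure-zero common zero set of $m_{\xi,1}(\cdot,z)$ and $m_{\xi,1}(\cdot,w)$.
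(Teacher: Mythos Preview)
Your proposal is correct and follows exactly the approach sketched in the paper: substitute $a=m_{\xi,p}(\cdot,w)$, $b=m_{\xi,p}(\cdot,z)$ into \eqref{al1} and \eqref{al2}, integrate over $\Omega$, and use Theorem~\ref{thm1} to identify the real part of the left-hand side with $H_{\xi,p}(z,w)/(K_{\xi,p}(z)K_{\xi,p}(w))$. Your detailed bookkeeping of the cross terms and the normalizations is accurate, and the paper itself leaves precisely this computation to the reader.
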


Recall that we denote
\[\xi\cdot K_{\xi,p}(z,w)\coloneqq (\xi\cdot K_{\xi,p}(\cdot,w))(z)\]
and
\[\xi\cdot m_{\xi,p}(z,w)\coloneqq (\xi\cdot m_{\xi,p}(\cdot,w))(z)\]
for any $z,w\in\Omega$.
We will need the following estimates for $H_{\xi,p}(z,w)$.
\begin{Lemma}
\label{lemma7}
For any compact set $K\Subset U\subset\Omega$ and $\gamma>0$, where $U$ is an open set, there exists a constant $C_{\gamma}$ such that
\begin{align}
\label{al7}
H_{\xi,p}(z,w)\le C_{\gamma}|z-w|\cdot\|\xi\cdot m_{\xi,p}(\cdot,z)-\xi\cdot m_{\xi,p}(\cdot,w)\|_{L^{\gamma}(U)}
\end{align}
for any $z,w\in K$. There also exists a constant $C'$ such that
\begin{align}
\label{al8}
H_{\xi,p}(z,w)\le C'|z-w|\cdot\sup_{U} |\xi\cdot m_{\xi,p}(\cdot,z)-\xi\cdot m_{\xi,p}(\cdot,w)|
\end{align}
for any $z,w\in K$.
\end{Lemma}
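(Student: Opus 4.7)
The plan is to rewrite $H_{\xi,p}(z,w)$ in a form that makes the difference $\psi := \xi\cdot m_{\xi,p}(\cdot, z) - \xi\cdot m_{\xi,p}(\cdot, w)$ (a holomorphic function on $\Omega$ by Lemma \ref{lemma2}) appear directly, and then to reduce both claims to standard interior estimates for this $\psi$. From $K_{\xi,p}(\cdot, w) = K_{\xi,p}(w)\, m_{\xi,p}(\cdot, w)$ together with the normalizations $(\xi\cdot m_{\xi,p}(\cdot, w))(w) = 1 = (\xi\cdot m_{\xi,p}(\cdot, z))(z)$ one reads off $1 - (\xi\cdot m_{\xi,p}(\cdot, w))(z) = \psi(z)$ and $1 - (\xi\cdot m_{\xi,p}(\cdot, z))(w) = -\psi(w)$, so substituting into the definition of $H_{\xi,p}$ yields
\[H_{\xi,p}(z,w) = \re\bigl[K_{\xi,p}(w)\psi(z) - K_{\xi,p}(z)\psi(w)\bigr] = \re\bigl[(K_{\xi,p}(w)-K_{\xi,p}(z))\psi(z)\bigr] + K_{\xi,p}(z)\,\re\bigl(\psi(z)-\psi(w)\bigr).\]

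From here I would bound the two summands separately. The first is controlled by Proposition \ref{pro4} (local Lipschitz continuity of $K_{\xi,p}$), giving $|K_{\xi,p}(w)-K_{\xi,p}(z)| \le L|z-w|$ for $z,w \in K$, multiplied by $|\psi(z)| \le \sup_K |\psi|$. For the second summand I would use the local upper bound on $K_{\xi,p}$ from Proposition \ref{pro10} together with a Lipschitz-type inequality $|\psi(z)-\psi(w)| \le C|z-w|\,\sup_{K'}|\nabla\psi|$ on a slightly fattened compact set $K \subset K' \Subset U$. The latter is the mean value theorem along the line segment from $z$ to $w$ when $|z-w|$ is small enough that the segment lies in $K'$, and it follows trivially from an $L^\infty$ bound on $\psi$ in the complementary (and bounded-below) range of $|z-w|$.

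The final ingredient is the purely holomorphic estimate
\[\sup_K|\psi| + \sup_{K'}|\nabla\psi| \le C_{\gamma}\,\|\psi\|_{L^{\gamma}(U)},\]
valid whenever $K \Subset K' \Subset U$: the first summand comes from the sub-mean value property of the plurisubharmonic function $|\psi|^\gamma$, and the second from Cauchy's estimate for the derivatives of $\psi$ followed by the same sub-mean value inequality applied on a smaller ball. Combining all bounds proves \eqref{al7}, and \eqref{al8} follows by the identical argument with the $L^\gamma$ bound on $\psi$ replaced by the trivial $\sup_U$ bound, which makes the sub-mean value step unnecessary and leaves only Cauchy's estimate for $|\nabla\psi|$. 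I do not foresee any serious obstacle: the displayed identity is what carries the argument, after which only previously established properties of $K_{\xi,p}$ and elementary complex analysis enter.
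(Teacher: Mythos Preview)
Your proposal is correct and follows essentially the same route as the paper. Your displayed identity $H_{\xi,p}(z,w)=\re[K_{\xi,p}(w)\psi(z)-K_{\xi,p}(z)\psi(w)]$ is exactly the paper's starting decomposition (written there by inserting $\xi\cdot m_{\xi,p}(w,w)=\xi\cdot m_{\xi,p}(z,z)=1$), and both proofs then split off the same two summands, controlling one by Proposition~\ref{pro4} together with the sub-mean-value bound $|\psi(z)|\le C\|\psi\|_{L^{\gamma}(U)}$, and the other by Proposition~\ref{pro10} together with Cauchy's estimate (the paper invokes \eqref{b3} directly, you phrase it via $\nabla\psi$ and the mean value theorem, which amounts to the same thing).
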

\begin{proof}
For any $z\neq w\in K$,
\begin{align}
&\left|\frac{H_{\xi,p}(z,w)}{z-w}\right|\notag\\
\le&\ \left|\frac{K_{\xi,p}(z)(\xi\cdot m_{\xi,p}(w,w)-\xi\cdot m_{\xi,p}(w,z))-K_{\xi,p}(w)(\xi\cdot m_{\xi,p}(z,w)-\xi\cdot m_{\xi,p}(z,z))}{z-w}\right|        \notag\\
\le&\  K_{\xi,p}(z)\frac{|(\xi\cdot m_{\xi,p}(w,w)-\xi\cdot m_{\xi,p}(w,z))-(\xi\cdot m_{\xi,p}(z,w)-\xi\cdot m_{\xi,p}(z,z))|}{|z-w|}\notag\\
&+\frac{ |K_{\xi,p}(z)- K_{\xi,p}(w)|}{|z-w|}|\xi\cdot m_{\xi,p}(z,w)-\xi\cdot m_{\xi,p}(z,z)|.               \notag
\end{align}

For any $\gamma>0$, inequality \eqref{b3} implies that 
\begin{align}
&\frac{|(\xi\cdot m_{\xi,p}(w,w)-\xi\cdot m_{\xi,p}(w,z))-(\xi\cdot m_{\xi,p}(z,w)-\xi\cdot m_{\xi,p}(z,z))|}{|z-w|}\notag\\
\le&\  C_{\gamma}'\|\xi\cdot m_{\xi,p}(\cdot,z)-\xi\cdot m_{\xi,p}(\cdot,w)\|_{L^{\gamma}(U)}\notag
\end{align}
for a constant $C_{\gamma}'$ and the sub mean-value formula shows that
\[|\xi\cdot m_{\xi,p}(z,z)-\xi\cdot m_{\xi,p}(z,w)|\le C_{\gamma}''\|\xi\cdot m_{\xi,p}(\cdot,z)-\xi\cdot m_{\xi,p}(\cdot,w)\|_{L^{\gamma}(U)}\]
for a constant $C_{\gamma}''$.

The inequality \eqref{align4} and Proposition \ref{pro4} imply that $K_{\xi,p}(z)$ and 
\[\frac{ |K_{\xi,p}(z)- K_{\xi,p}(w)|}{|z-w|}\] 
are bounded by a constant $C'$. Then \eqref{al7} can be proved by combining the above results.

Note that 
\[\|\xi\cdot m_{\xi,p}(\cdot,z)-\xi\cdot m_{\xi,p}(\cdot,w)\|_{L^{\gamma}(U)}\le \mathrm{vol}(U)^{\frac{1}{\gamma}}\sup_{U} |\xi\cdot m_{\xi,p}(\cdot,z)-\xi\cdot m_{\xi,p}(\cdot,w)|.\]
We get \eqref{al8}.
\end{proof}

We will need the following result about the convergence of $m_{\xi,p}(\cdot,w)$.
\begin{Lemma}
\label{lemma8}
Let $p\ge 1$. For any sequence of points $w_j\rightarrow w_0\in\Omega$, there exists a subsequence $j_k$ such that
\[m_{\xi,p}(\cdot,w_{j_k})\rightarrow m_{\xi,p}(\cdot,w_0)\]
locally uniformly on $\Omega$.
\end{Lemma}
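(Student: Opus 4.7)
The plan is to run the standard compactness-plus-uniqueness argument: extract a locally uniformly convergent subsequence, identify the limit via Lemma \ref{lemma3} and Fatou, and then invoke the uniqueness of the minimizer (Proposition \ref{pro6}).

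First I would show that $\{m_{\xi,p}(\cdot,w_j)\}$ is a normal family. By Proposition \ref{pro10}, $K_{\xi,p}$ is bounded from below by a positive constant on $\Omega$, hence the $L^p$ norms $\|m_{\xi,p}(\cdot,w_j)\|_p=m_{\xi,p}(w_j)=K_{\xi,p}(w_j)^{-1/p}$ are uniformly bounded. Then inequality \eqref{eq1} (applied on a relatively compact exhaustion of $\Omega$) yields a uniform sup-bound on every compact subset, and Montel's theorem extracts a subsequence, call it $(m_{\xi,p}(\cdot,w_{j_k}))$, converging locally uniformly on $\Omega$ to some $f_0\in\mathcal{O}(\Omega)$.

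Next I would identify $f_0$ as $m_{\xi,p}(\cdot,w_0)$. Set $g_k(z):=(\xi\cdot m_{\xi,p}(\cdot,w_{j_k}))(z)$ and $g_0(z):=(\xi\cdot f_0)(z)$, both holomorphic on $\Omega$ (Lemma \ref{lemma2}). By Lemma \ref{lemma3}, $g_k\to g_0$ locally uniformly. Since $g_k(w_{j_k})=1$ for all $k$ and $w_{j_k}\to w_0$, the locally uniform convergence of $g_k$ on a neighborhood of $w_0$ forces $g_0(w_0)=1$, i.e.\ $(\xi\cdot f_0)(w_0)=1$. Fatou's lemma then gives
\[
\|f_0\|_p^p\le\liminf_{k\to\infty}\|m_{\xi,p}(\cdot,w_{j_k})\|_p^p=\liminf_{k\to\infty}m_{\xi,p}(w_{j_k})^p.
\]
Since $K_{\xi,p}$ is continuous (Proposition \ref{pro4}), so is $m_{\xi,p}=K_{\xi,p}^{-1/p}$; hence $\liminf_k m_{\xi,p}(w_{j_k})^p=m_{\xi,p}(w_0)^p$ and therefore $\|f_0\|_p\le m_{\xi,p}(w_0)$. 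By the defining extremal property of $m_{\xi,p}(w_0)$, the inequality must be an equality, so $f_0$ is a minimizer for the problem at $w_0$. Proposition \ref{pro6} yields $f_0=m_{\xi,p}(\cdot,w_0)$, completing the proof.

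The only subtle point in this plan is the evaluation step $g_k(w_{j_k})\to g_0(w_0)$, where both the functional $g_k$ and the evaluation point $w_{j_k}$ vary simultaneously; this is harmless once one notes that $g_k\to g_0$ is \emph{locally uniform} on a neighborhood of $w_0$ containing the tail of $(w_{j_k})$, but it must be stated explicitly. Everything else is a direct application of results already established in this paper. As a bonus the argument shows that every subsequential limit equals $m_{\xi,p}(\cdot,w_0)$, so in fact the whole sequence $m_{\xi,p}(\cdot,w_j)$ converges locally uniformly, though only the stated subsequential version is needed here.
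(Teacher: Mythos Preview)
Your proposal is correct and follows essentially the same compactness-plus-uniqueness strategy as the paper: bound the $L^p$ norms uniformly, extract a locally uniform limit via Montel, use Fatou together with the continuity of $m_{\xi,p}$ to see that the limit has minimal norm, verify the constraint at $w_0$, and conclude by the uniqueness in Proposition~\ref{pro6}. The only difference is in the verification of $(\xi\cdot f_0)(w_0)=1$: the paper derives a Lipschitz bound for $z\mapsto(\xi\cdot m_{\xi,p}(\cdot,w_{j_k}))(z)$ directly from the derivative estimate \eqref{b3} and the uniform $L^p$ bound \eqref{al9}, whereas you obtain the same conclusion more qualitatively via Lemma~\ref{lemma3} and the locally uniform convergence of the $g_k$. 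Both routes are valid and equally short.
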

\begin{proof}
Since 
\begin{align}
\label{al9}
\int_{\Omega}|m_{\xi,p}(\cdot,w_{j})|^p=\frac{1}{K_{\xi,p}(w_{j})}\le \frac{\int_{B(o,R)}|z^{\alpha_0}|^p}{|\xi_{\alpha_0}|^p}
\end{align}
by \eqref{align3}, we know from the sub mean-value formula that $\big(m_{\xi,p}(\cdot,w_{j})\big)_{j\ge 1}$ forms a normal family. There exists a subsequence $j_k$ such that
$m_{\xi,p}(\cdot,w_{j_k})$ converge locally uniformly to a holomorphic function $m_0(\,\cdot\,)$ on $\Omega$. Fatou's lemma shows that
\[\int_{\Omega}|m_0|^p\le \liminf_{k\rightarrow +\infty}\int_{\Omega}|m_{\xi,p}(\cdot,w_{j_k})|^p=\liminf_{k\rightarrow +\infty} m_{\xi,p}(w_{j_k})^p=m_{\xi,p}(w_0)^p.\]

We can assume that $w_{j_k}\in K$ for a compact subset $K$ of $\Omega$. 
Inequalities \eqref{b3} and \eqref{al9} show
\[\frac{|m_{\xi,p}(w_{j_k},w_{j_k})-m_{\xi,p}(w_0,w_{j_k})|}{|w_{j_k}-w_0|}\le C\]
for a constant $C$, which implies that $m_{\xi,p}(w_0,w_{j_k})\rightarrow 1$ and hence $m_0(w_0)=1$.
We deduce from Proposition \ref{pro6} that $m_0(\cdot)=m_{\xi,p}(\cdot,w_0)$, Lemma \ref{lemma8} has been proved.
\end{proof}

Now we prove Theorem \ref{thm3}.
\begin{proof}[Proof of Theorem \ref{thm3}]
Note that \eqref{align3} and \eqref{align4} imply that 
\[\frac{1}{C_1}\le K_{\xi,p}(w)\le C_1\]
for any $w\in K$, where $C_1$ is a constant. 

First, we assume $1<p\le 2$. H\"{o}lder's inequality and \eqref{al5} show that
\begin{align}
&\int_{\Omega}|m_{\xi,p}(\cdot,w_1)-m_{\xi,p}(\cdot,w_2)|^p\notag\\
=&\ \int_{\Omega}|m_{\xi,p}(\cdot,w_1)-m_{\xi,p}(\cdot,w_2)|^p|m(\cdot,w_1)|^{\frac{(p-2)p}{2}}|m(\cdot,w_1)|^{\frac{(2-p)p}{2}}\notag\\
\le&\ \left(\int_{\Omega}|m_{\xi,p}(\cdot,w_1)-m_{\xi,p}(\cdot,w_2)|^2|m(\cdot,w_1)|^{p-2} \right)^{\frac{p}{2}} \left(\int_{\Omega}|m(\cdot,w_1)|^p\right)^{1-\frac{p}{2}}       \notag\\
\le&\  C_2 H_{\xi,p}(w_1,w_2)^{\frac{p}{2}},\notag
\end{align}
where $C_2$ is a constant. Let $U$ be an open set such that $K\subset U\Subset \Omega$. Combining the above results with \eqref{al7}, we conclude that
\begin{equation}\label{eq4}
    \begin{aligned}
\|m_{\xi,p}(\cdot,w_1)&-m_{\xi,p}(\cdot,w_2)\|_{L^p(\Omega)}\le C_2^{\frac{1}{p}} H_{\xi,p}(w_1,w_2)^{\frac{1}{2}}\\
&\le C_2^{\frac{1}{p}}C_{\gamma}^{\frac{1}{2}}|w_1-w_2|^{\frac{1}{2}}\cdot\|\xi\cdot m_{\xi,p}(\cdot,w_1)-\xi\cdot m_{\xi,p}(\cdot,w_2)\|_{L^p(U)}^{\frac{1}{2}}.
\end{aligned}
\end{equation}

It follows from Proposition \ref{pro3} and inequality \eqref{eq4} that
\begin{align}
&\|m_{\xi,p}(\cdot,w_1)-m_{\xi,p}(\cdot,w_2)\|_{L^p(\Omega)}\notag\\
\le& \ C_2^{\frac{1}{p}}C_{\gamma}^{\frac{1}{2}}|w_1-w_2|^{\frac{1}{2}}\|\xi\cdot m_{\xi,p}(\cdot,w_1)-\xi\cdot m_{\xi,p}(\cdot,w_2)\|_{L^p(U)}^{\frac{1}{2}}\notag\\
\le& \ C_3|w_1-w_2|^{\frac{1}{2}}\|m_{\xi,p}(\cdot,w_1)-m_{\xi,p}(\cdot,w_2)\|_{L^p(\Omega)}^{\frac{1}{2}}\notag
\end{align}
for a constant $C_3$, which implies that
\begin{equation}
\label{ali3}
\|m_{\xi,p}(\cdot,w_1)-m_{\xi,p}(\cdot,w_2)\|_{L^p(\Omega)}\le C_3^2|w_1-w_2|.
\end{equation}
Then \eqref{ali1} follows from the sub mean-value formula.

Proposition \ref{pro3} and inequality \eqref{ali3} imply that
\[\sup_{K}|\xi\cdot m_{\xi,p}(\cdot,w_1)-\xi\cdot m_{\xi,p}(\cdot,w_2)|\le C_3'|w_1-w_2|\]
for a constant $C_3'$. Thus, \eqref{ali5} holds.

Now we assume $p>2$. By Lemma \ref{lemma8}, $\big(m_{\xi,p}(\cdot,w)\big)_{w\in K}$ forms a compact family of holomorphic functions in the topology of locally uniform convergence. Demailly--Koll\'{a}r's theorem on the semi-continuity property of the complex singularity exponent (\cite{demailly4}) implies that
\begin{equation}
\label{ali2}
\int_{U_1}|m_{\xi,p}(\cdot,w)|^{-c}\le M
\end{equation}
for all $w\in K$, where $U_1$ is an open subset of $\Omega$ such that $K\subset U_1$, and $c>0$, $M>0$ are constants.

 Denote $\beta=\frac{2c}{p-2+c}<2$. Then H\"{o}lder inequality, inequalities \eqref{ali2} and \eqref{al6} indicate that
\begin{equation*}
    \begin{aligned}
&\int_{U_1}|m_{\xi,p}(\cdot,w_1)-m_{\xi,p}(\cdot,w_2)|^\beta\\
=&\ \int_{U_1}|m_{\xi,p}(\cdot,w_1)-m_{\xi,p}(\cdot,w_2)|^\beta|m_{\xi,p}(\cdot,w_1)|^{\frac{(p-2)\beta}{2}}|m_{\xi,p}(\cdot,w_1)|^{\frac{(2-p)\beta}{2}}\\
\le&\  \left\{\int_{U_1}|m_{\xi,p}(\cdot,w_1)|^{p-2}|m_{\xi,p}(\cdot,w_1)-m_{\xi,p}(\cdot,w_2)|^2\right\}^{\frac{\beta}{2}}\cdot\left\{\int_{U_1}|m_{\xi,p}(\cdot,w_1)|^{-c}\right\}^{1-\frac{\beta}{2}}\\
\le&\  C_4 H_{\xi,p}(w_1,w_2)^{\frac{\beta}{2}}
\end{aligned}
\end{equation*}
for a constant $C_4$. Take an open set $U_2$ such that 
\[K\subset U_2\Subset U_1.\]
Combining the above results with \eqref{al7} and Proposition \ref{pro3}, we conclude that
\begin{equation*}
\begin{aligned}
\|m_{\xi,p}(\cdot,w_1)&-m_{\xi,p}(\cdot,w_2)\|_{L^\beta(U_1)}\le C_4^{\frac{1}{\beta}}H_{\xi,p}(w_1,w_2)^{\frac{1}{2}}\\
&\le C_4^{\frac{1}{\beta}}C_{\beta}^{\frac{1}{2}}|w_1-w_2|^{\frac{1}{2}}\|\xi\cdot m_{\xi,p}(\cdot,w_1)-\xi\cdot m_{\xi,p}(\cdot,w_2)\|_{L^\beta(U_2)}^{\frac{1}{2}}\\
&\le C_5|w_1-w_2|^{\frac{1}{2}}\|m_{\xi,p}(\cdot,w_1)-m_{\xi,p}(\cdot,w_2)\|_{L^\beta(U_1)}^{\frac{1}{2}}
\end{aligned}
\end{equation*}
for a constant $C_5$, which implies that
\begin{align}
\label{ali6}
\|m_{\xi,p}(\cdot,w_1)-m_{\xi,p}(\cdot,w_2)\|_{L^{\beta}(U_1)}\le C_5^2|w_1-w_2|.
\end{align}
Thus, we get \eqref{ali1} by the sub-mean value formula.

Proposition \ref{pro3} and \eqref{ali6} imply that
\[\sup_{K}|\xi\cdot m_{\xi,p}(\cdot,w_1)-\xi\cdot m_{\xi,p}(\cdot,w_2)|\le C_5'|w_1-w_2|\]
for a constant $C_5'$, which gives \eqref{ali5}.
\end{proof}

Next, we prove Theorem \ref{thm4}.

\begin{proof}[Proof of Theorem \ref{thm4}]
Let $U_1$ be an open set such that $K\subset U_1\Subset \Omega\backslash\{m_{\xi,1}(\cdot,w_1)=0\}$. It follows from \eqref{align3} that $m_{\xi,1}(w)^p=\int_{\Omega}|m_{\xi,1}(\cdot,w)|^p$ is uniformly bounded. Note that $m_{\xi,1}(\cdot,w_1)$ is a non-vanishing continuous function on $\overline{U_1}$. Hence, there exists a constant $C_1$ such that
\begin{equation}
\label{al10}
\frac{1}{C_1}\le |m_{\xi,1}(z,w_1)|\le |m_{\xi,1}(z,w_1)|+|m_{\xi,1}(z,w_2)|\le C_1
\end{equation}
for any $z,w_2\in U_1$.

According to \eqref{al4} and \eqref{al10}, we have
\begin{align}
\int_{U_1}\left|\mathrm{Im}\,\left\{\frac{m_{\xi,1}(\cdot,w_2)}{m_{\xi,1}(\cdot,w_1)}\right\}\right|^2&=\int_{U_1}\left|\mathrm{Im}\,\left\{\frac{m_{\xi,1}(\cdot,w_2)\overline{m_{\xi,1}(\cdot,w_1)}}{|m_{\xi,1}(\cdot,w_1)|^2}\right\}\right|^2  \notag\\ 
&\le C_1^5\int_{U_1}\frac{|\mathrm{Im}\,(\overline{m_{\xi,1}(\cdot,w_1)}m_{\xi,1}(\cdot,w_2))|^2}{(|m_{\xi,1}(\cdot,w_1)|+|m_{\xi,1}(\cdot,w_2)|)^3}\\
&\le C_2H_{\xi,1}(w_1,w_2)\notag
\end{align}
for a constant $C_2$. Let $U_2, U_3, U_4$ be domains such that $K\Subset U_4 \Subset U_3\Subset U_2\Subset U_1$.
It follows from Proposition \ref{pro3}, \eqref{al8} and the sub mean-value formula that
\begin{equation*}
\begin{aligned}     
\sup_{U_2}\left|\mathrm{Im}\,\left\{\frac{m_{\xi,1}(\cdot,w_2)}{m_{\xi,1}(\cdot,w_1)}\right\}\right|&\le C_3\left\{\int_{U_1}\left|\mathrm{Im}\,\left\{\frac{m_{\xi,1}(\cdot,w_2)}{m_{\xi,1}(\cdot,w_1)}\right\}\right|^2\right\}^{\frac{1}{2}}\\
&\le C_3C_2^{\frac{1}{2}}H_{\xi,1}(w_1,w_2)^{\frac{1}{2}}\notag\\
&\le C_3C_2^{\frac{1}{2}}C'^{\frac{1}{2}}|z-w|^{\frac{1}{2}}\cdot\sup_{U_4} |\xi\cdot m_{\xi,p}(\cdot,w_1)-\xi\cdot m_{\xi,p}(\cdot,w_2)|^{\frac{1}{2}}\\
&\le C_4|z-w|^{\frac{1}{2}}\cdot\sup_{U_3} |m_{\xi,p}(\cdot,w_1)-m_{\xi,p}(\cdot,w_2)|^{\frac{1}{2}}
\end{aligned}
\end{equation*}
for a constant $C_4$.

Recall that the Borel--Carath\'{e}odory inequality (see e.g., \cite{CZ12}) shows
\[\sup_{\Delta_r}|f|\le \frac{2r}{R-r}\sup_{\Delta_R}\mathrm{Im}\,f+\frac{R+r}{R-r}|f(0)|\]
for any holomorophic function $f$ on $\Delta_R\coloneqq \{z\in\mathbb{C}\colon|z|<R\}$ and $0<r<R$. Let $r=\frac{1}{2}R$, and it follows that
\begin{equation}\label{ali4}
    \sup_{\Delta_{\frac{R}{2}}}|f|\le 2\sup_{\Delta_R}\mathrm{Im}\,f+3|f(0)|.
\end{equation}

Let $h\coloneqq \frac{m_{\xi,1}(\cdot,w_2)}{m_{\xi,1}(\cdot,w_1)}-1$. Since $m_{\xi,1}(w)=\int_{\Omega}|m_{\xi,1}(\cdot,w)|$ is locally Lipschitz continuous by Proposition \ref{pro4},
it follows from the sub mean-value formula that 
\[|h(w_1)|\le C_5|w_1-w_2|\]
for a constant $C_5$. Since $\{B_{\frac{1}{2}\mathrm{dist}(w,\partial U_2 )}(w):w\in \overline{U_3}\}$ form an open covering of $U_3$, a finite chain of balls lying in $B_{\frac{1}{2}\mathrm{dist}(w,\partial U_2 )}(w)$ for some $w\in \overline{U_3}$ connect $w_1$ to any point in $U_3$.
Applying (\ref{ali4}) successively on each ball, we conclude that
\[\sup_{U_3} |h|\le C_6|w_1-w_2|+C_7|w_1-w_2|^{\frac{1}{2}}\cdot\sup_{U_3} |m_{\xi,p}(\cdot,w_1)-m_{\xi,p}(\cdot,w_2)|^{\frac{1}{2}}\]
for two constants $C_6$ and $C_7$,
which implies that
\begin{align}
&\sup_{U_3} |m_{\xi,p}(\cdot,w_1)-m_{\xi,p}(\cdot,w_2)|\notag\\
&\le C_1C_6|w_1-w_2|+C_1C_7|w_1-w_2|^{\frac{1}{2}}\cdot\sup_{U_3} |m_{\xi,p}(\cdot,w_1)-m_{\xi,p}(\cdot,w_2)|^{\frac{1}{2}}\notag
\end{align}
by \eqref{al10}. As a result,
\begin{align}
\label{ali7}
\sup_{U_3} |m_{\xi,p}(\cdot,w_1)-m_{\xi,p}(\cdot,w_2)|\le C_8|w_1-w_2|
\end{align}
for a constant $C_8$.

Proposition \ref{pro3} and \eqref{ali7} imply that
\[\sup_{K}|\xi\cdot m_{\xi,p}(\cdot,w_1)-\xi\cdot m_{\xi,p}(\cdot,w_2)|\le C_8'|w_1-w_2|\]
for a constant $C_8'$. Theorem \ref{thm4} is proved. 
\end{proof}

At last, we prove Theorem \ref{thm5}.

\begin{proof}[Proof of Theorem \ref{thm5}.]
Let $U_2\Subset U_1\Subset\Omega$. Then 
\[h\coloneqq \frac{\xi\cdot K_{\xi,p}(\cdot,z)-\xi\cdot K_{\xi,p}(\cdot,w)}{z-w}\]
is a uniformly bounded holomorphic function on $U_1$ for every $z\neq w\in U_2$ by Theorem \ref{thm3}. Cauchy's estimate implies that
\[\sup_{U_2}\left|\frac{\partial h}{\partial x_j}\right|\le C_1\]
for a constant $C_1$. Note that
\[\int_{\Omega}|K_{\xi,p}(\cdot,z)|^p=|K_{\xi,p}(z)|^{p-1}\]
is uniformly bounded for any $z\in U_1$ by \eqref{align4}. It follows from Proposition \ref{pro3} and Cauchy's estimate that
\[\Bigg|\frac{\partial (\xi\cdot K_{\xi,p}(\cdot,z))}{\partial x_j}\bigg|_z-\frac{\partial (\xi\cdot K_{\xi,p}(\cdot,z))}{\partial x_j}\bigg|_w \Bigg| \le C_2|z-w|\]
for a constant $C_2$ and any $z,w\in U_2$. As a result, we get
\begin{equation}\label{ali8}
    \begin{aligned} 
& \Bigg|\frac{\partial (\xi\cdot K_{\xi,p}(\cdot,z))}{\partial x_j}\Big|_z-\frac{\partial (\xi\cdot K_{\xi,p}(\cdot,w))}{\partial x_j}\Big|_w\Bigg|\\
\le&\  \Bigg|\frac{\partial (\xi\cdot K_{\xi,p}(\cdot,z))}{\partial x_j}\Big|_z-\frac{\partial (\xi\cdot K_{\xi,p}(\cdot,z))}{\partial x_j}\Big|_w\Bigg|+\left|\frac{\partial h}{\partial x_j}(w)\right||z-w|\\
\le&\  (C_1+C_2)|z-w|
    \end{aligned}
\end{equation}
for any $z,w\in U_2$.

Let $e_1,\ldots,e_{2n}$ be the standard basis in $\mathbb{R}^{2n}=\mathbb{C}^n$. Then
\begin{align}
&K_{\xi,p}(z+te_j)^{-1}-K_{\xi,p}(z)^{-1}\notag\\
=&\ m_{\xi,p}(z+te_j)^p-m_{\xi,p}(z)^p\notag\\
\le&\  p\re \int_{\Omega}|m_{\xi,p}(\cdot,z+te_j)|^{p-2}\overline{m_{\xi,p}(\cdot,z+te_j)}(m_{\xi,p}(\cdot,z+te_j)-m_{\xi,p}(\cdot,z))\notag
\end{align}
by \eqref{al3}. Note that $\xi\cdot m_{\xi,p}(z,z)=\xi\cdot m_{\xi,p}(z+te_j,z+te_j)=1$. We get from Theorem \ref{thm1}, inequality \eqref{align3} and Proposition \ref{pro4} that
\begin{align}
&K_{\xi,p}(z+te_j)^{-1}-K_{\xi,p}(z)^{-1}\notag\\
\le&\ \frac{p}{K_{\xi,p}(z+te_j)}\re \{\xi\cdot m_{\xi,p}(z+te_j,z+te_j)-\xi\cdot m_{\xi,p}(z+te_j,z)\}\notag\\
=&\ -\frac{p}{K_{\xi,p}(z+te_j)}\re \{\xi\cdot m_{\xi,p}(z+te_j,z)-\xi\cdot m_{\xi,p}(z,z)\}\notag\\
\le&\ -\frac{pt}{K_{\xi,p}(z)}\re \frac{\partial (\xi\cdot m_{\xi,p}(\cdot,z))}{\partial x_j}\bigg|_z+C_3|t|^2\notag
\end{align}
for a constant $C_3$  as $t$ tends to $0$.

Similarly, we have
\begin{align}
&K_{\xi,p}(z)^{-1}-K_{\xi,p}(z+te_j)^{-1}\notag\\
\le&\  \frac{p}{K_{\xi,p}(z)}\cdot\re \{\xi\cdot m_{\xi,p}(z,z)-\xi\cdot m_{\xi,p}(z,z+te_j)\}\notag\\
=&\ -\frac{p}{K_{\xi,p}(z+te_j)}\cdot\re \{\xi\cdot m_{\xi,p}(z,z+te_j)-\xi\cdot m_{\xi,p}(z+te_j,z+te_j)\}\notag\\
\le&\  \frac{pt}{K_{\xi,p}(z)}\cdot\re \frac{\partial (\xi\cdot m_{\xi,p}(\cdot,z+te_j))}{\partial x_j}\bigg|_z+C_4|t|^2\notag
\end{align}
for a constant $C_4$  as $t$ tends to $0$. Combining the above results with \eqref{ali8}, we conclude that
\[\frac{\partial K_{\xi,p}^{-1}}{\partial x_j}(z)=-\frac{p}{K_{\xi,p}(z)}\cdot\re \frac{\partial (\xi\cdot m_{\xi,p}(\cdot,z))}{\partial x_j}\bigg|_z,\]
which implies that
\[\frac{\partial K_{\xi,p}}{\partial x_j}(z)=-K_{\xi,p}(z)^2\frac{\partial K_{\xi,p}^{-1}}{\partial x_j}(z)=p\re \frac{\partial (\xi\cdot K_{\xi,p}(\cdot,z))}{\partial x_j}\bigg|_z.\]
Combining with \eqref{ali8}, we get $K_{\xi,p}\in C_{\loc}^{1,1}(\Omega)$.
\end{proof}

\section{Plurisubharmonicity and boundary behavior}
In this section, we consider some other properties of the $p$-Bergman kernel with respect to $\xi$, mainly about the plurisubharmonic properties and boundary behavior.

We still fix a $\xi\in\ell_1^{(n)}$ satisfying that there exists an $\alpha_0\in\mathbb{N}^n$ such that $\xi_{\alpha_0}\neq 0$, and $\Omega\Subset\mathbb{C}^n$ denotes a bounded domain. 

Proposition \ref{pro2} shows that
\[\log K_{\xi,p}(z)=\sup\big\{p\log |(\xi\cdot f)(z)| \colon \|f\|_p=1\big\}.\]
Then it can be deduced from Lemma \ref{lemma2} and Proposition \ref{pro4} that
$\log K_{\xi,p}(z)$ is a plurisubharmonic function on $\Omega$. Moreover,
\begin{Proposition}
\label{pro8}
Suppose that $p\ge2$ and $\xi_{\alpha}=0$ for all $\alpha>\alpha_0$ satisfying  $|\alpha|=|\alpha_0|+1$, then $\log K_{\xi,p}(z)$ is a strictly plurisubharmonic function.
\end{Proposition}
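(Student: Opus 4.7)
The goal is to show that the complex Hessian of $\log K_{\xi,p}$ is strictly positive at every $z_0\in\Omega$ along every nonzero direction $v\in\mathbb{C}^n$. The strategy uses the variational representation from Proposition \ref{pro2},
\[\log K_{\xi,p}(z) = \sup\big\{p\log|(\xi\cdot f)(z)| - \log\|f\|_p^p : f \in A^p(\Omega)\setminus\{0\}\big\},\]
to construct a sharper lower bound than the one provided by the extremizer $f_0 := m_{\xi,p}(\cdot, z_0)$ alone, whose contribution $p\log|(\xi\cdot f_0)(z)|$ is merely pluriharmonic near $z_0$ and therefore only gives nonnegativity of the Hessian.

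The crucial ingredient is the construction of an auxiliary $h\in A^p(\Omega)$, actually a polynomial in $z-z_0$, satisfying $(\xi\cdot h)(z_0)=0$ and $b:=\partial_v(\xi\cdot h)(z_0)\neq 0$. In terms of Taylor coefficients at $z_0$, this amounts to $\sum_\beta\tilde\eta^{(v)}_\beta a_\beta\neq 0$ where $\tilde\eta^{(v)}_\beta:=\sum_j\xi_{\beta-e_j}\beta_j v_j$; hence the existence of $h$ is equivalent to the linear independence of the functionals $f\mapsto (\xi\cdot f)(z_0)$ and $f\mapsto\partial_v(\xi\cdot f)(z_0)$ on $\mathcal{O}_{z_0}$. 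The hypothesis is used here: if $\tilde\eta^{(v)}=\kappa\xi$ in $\ell_1^{(n)}$, testing at $\beta=\alpha_0+e_k$ gives $\tilde\eta^{(v)}_{\alpha_0+e_k}=\kappa\xi_{\alpha_0+e_k}=0$ by hypothesis, and the resulting system on $v$ is governed by a matrix whose $k$-th diagonal entry is $\xi_{\alpha_0}((\alpha_0)_k+1)\neq 0$.

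Having secured $h$, set $f_\epsilon:=f_0+\epsilon h$ and consider
\[G(\lambda,\epsilon):=p\log|(\xi\cdot f_\epsilon)(z_0+\lambda v)|-\log\|f_\epsilon\|_p^p,\]
so that $\log K_{\xi,p}(z_0+\lambda v)\geq G(\lambda,\epsilon)$ with equality at $\lambda=\epsilon=0$. By Lemma \ref{lemma5} applied to $h$, the first-order $\epsilon$-variation of $\|f_\epsilon\|_p^p$ vanishes at $\epsilon=0$; together with the identity $\partial_\epsilon|f_0+\epsilon h|^p=\tfrac{p}{2}|f_0+\epsilon h|^{p-2}h\overline{f_0+\epsilon h}$ this gives the expansion
\[\|f_\epsilon\|_p^p=\|f_0\|_p^p+\tfrac{p^2}{4}|\epsilon|^2\!\int_\Omega|f_0|^{p-2}|h|^2+\re(Q\epsilon^2)+o(|\epsilon|^2)\]
for some constant $Q$ (the calculation uses $p\ge 2$ so that $|f_0|^{p-2}|h|^2$ is integrable). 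Substituting $\epsilon=\bar\lambda c$, expanding $(\xi\cdot f_\epsilon)(z_0+\lambda v)=1+a\lambda+bc|\lambda|^2+O(|\lambda|^3)$ with $a:=\partial_v(\xi\cdot f_0)(z_0)$, and using that the $\lambda^2,\bar\lambda^2$ terms are annihilated by $\partial_\lambda\partial_{\bar\lambda}$, one computes
\[\partial_\lambda\partial_{\bar\lambda}G(\lambda,\bar\lambda c)\big|_{\lambda=0}=p\,\re(bc)-\frac{p^2|c|^2}{4\|f_0\|_p^p}\int_\Omega|f_0|^{p-2}|h|^2.\]
Optimizing over $c\in\mathbb{C}$ (choose $c$ of the form $t\bar b/|b|$ with $t>0$) yields the strictly positive maximum $|b|^2\|f_0\|_p^p\big/\!\int_\Omega|f_0|^{p-2}|h|^2>0$. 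Combined with the $C^{1,1}$ regularity of $\log K_{\xi,p}$ (Theorem \ref{thm5}) and the fact that $G(\lambda,\bar\lambda c)$ is a smooth lower bound agreeing with $\log K_{\xi,p}(z_0+\lambda v)$ at $\lambda=0$, the Levi form of $\log K_{\xi,p}$ at $z_0$ in direction $v$ is bounded below by this positive quantity; arbitrariness of $v$ gives strict plurisubharmonicity.

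The main obstacle is the construction in the second paragraph: producing $h$ for every $v$. For $n=1$ the argument is immediate, since $\tilde\eta^{(v)}_{\alpha_0+1}=\xi_{\alpha_0}(\alpha_0+1)v$, so $\tilde\eta^{(v)}_{\alpha_0+1}=0$ forces $v=0$. For $n\ge 2$, the matrix $\big(\xi_{\alpha_0+e_k-e_j}(\alpha_0+e_k)_j\big)_{k,j}$ has the required nonzero diagonal but may fail to be invertible due to off-diagonal contributions, and one must enlarge the class of candidate polynomials to include $(z-z_0)^\beta$ with $|\beta|\ge|\alpha_0|+2$ to finish the argument that $\tilde\eta^{(v)}$ cannot be a scalar multiple of $\xi$. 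The remainder of the proof is routine second-order calculus adapted from the $p$-Bergman setting.
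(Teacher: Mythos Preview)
Your overall architecture is the same as the paper's: perturb the minimizer $f_0=m_{\xi,p}(\cdot,z_0)$ by $th$ with $(\xi\cdot h)(z_0)=0$, use Lemma~\ref{lemma5} to kill the first $t$-variation of $\|f_0+th\|_p^p$, bound the second $t$-variation via $p\ge 2$, and then couple $t$ to the circle variable to extract a strictly positive contribution in the sub-mean value inequality. The second-order calculus you sketch matches the paper's almost line by line.

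The genuine gap is exactly the one you flag: for $n\ge 2$ you have not produced an $h$ with $(\xi\cdot h)(z_0)=0$ and $\partial_v(\xi\cdot h)(z_0)\neq 0$. Your reduction to ``$\tilde\eta^{(v)}$ not a scalar multiple of $\xi$'' is correct, but the matrix $\big(\xi_{\alpha_0+e_k-e_j}(\alpha_0+e_k)_j\big)_{k,j}$ can indeed be singular (e.g.\ $n=2$, $\alpha_0=(1,1)$, $\xi_{(1,1)}=1$, $\xi_{(2,0)}=\xi_{(0,2)}=2$), and you only gesture at how to proceed. The paper bypasses the abstract linear-algebra route entirely and writes down an explicit candidate
\[
h^X(z)=\Big(\sum_{j=1}^n\frac{X_jz_j}{((\alpha_0)_j+1)\|X\|}\Big)(z-z_0)^{\alpha_0},
\]
which is supported on the monomials $(z-z_0)^{\alpha_0+e_j}$; the hypothesis $\xi_{\alpha_0+e_j}=0$ then makes $(\xi\cdot h^X)(z_0)=0$ immediate, and the coefficient of $(z-z_0)^{\alpha_0}$ in the expansion of $h^X$ at $z_0+re^{i\theta}X$ telescopes to $re^{i\theta}\|X\|$ (the weights $1/((\alpha_0)_j+1)$ are chosen precisely to make this sum collapse). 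Adopting this explicit $h$ is what your argument is missing. Two smaller points: the paper states strict plurisubharmonicity directly through the circle sub-mean value inequality rather than a pointwise Levi form (cleaner given only $C^{1,1}$ regularity), and it takes the direction $X\in\mathbb{R}^n$; you work with $v\in\mathbb{C}^n$ and a pointwise Hessian, which needs the extra step of passing from a smooth lower barrier to the averaged inequality, as you note.
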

\begin{proof}
It suffices to prove that for any $z\in\Omega$ , there exists a positive constant $c$ such that
\[\liminf_{r\rightarrow 0+}\frac{1}{r^2} \left(\frac{1}{2\pi}\int_0^{2\pi}\log K_{\xi,p}(z+re^{i\theta }X)d\theta-\log K_{\xi,p}(z)\right)>c|X|^2\]
for any $X\in \mathbb{R}^n$. By a translation, we can assume that $z=o$. 

Arbitrarily choose $h\in A^p(\Omega)$ satisfying $(\xi\cdot h)(o)=0$. Set $h_t\coloneqq m_{\xi,p}(\cdot,o)+th$, where $t\in\mathbb{C}$. Proof of Lemma \ref{lemma5} implies that 
\[\frac{\partial \|h_t\|_p^p}{\partial t}(0)=\frac{\partial \|h_t\|_p^p}{\partial \bar{t}}(0)=0.\]
Since 
\begin{align}
&\frac{\partial^2 |h_t|^p}{\partial t^2}=\frac{p(p-2)}{4}|h_t|^{p-4}(\bar{h}_th)^2,\notag\\
&\frac{\partial^2 |h_t|^p}{\partial t\partial \bar{t}}=\frac{p^2}{4}|h_t|^{p-2}|h|^2\notag
\end{align}
by direct calculations,
\begin{align}
&\left|\frac{\partial^2 |h_t|^p}{\partial t^2}\right|\le\frac{p(p-2)}{4}(|m_{\xi,p}(\cdot,o)|+|h|)^{p-2}|h|^2,\notag\\
&\left|\frac{\partial^2 |h_t|^p}{\partial t\partial \bar{t}}\right|=\frac{p^2}{4}(|m_{\xi,p}(\cdot,o)|+|h|)^{p-2}|h|^2\notag
\end{align}
for $|t|\le 1$. H\"older's inequality implies that  
\[\int_{\Omega}(|m_{\xi,p}(\cdot,o)|+|h|)^{p-2}|h|^2\le\|h\|_p^2\left\{\int_{\Omega}(|m_{\xi,p}(\cdot,o)|+|h|)^p\right\}^{1-\frac{2}{p}}\eqqcolon c<\infty.\]
Therefore the dominated convergence theorem shows that 
\begin{align}
\label{alig2}
\left|\frac{\partial^2 \|h_t\|_p^p}{\partial t^2}(0)\right|\le \frac{p(p-2)}{4}c, \quad \left|\frac{\partial^2 \|h_t\|_p^p}{\partial t\partial \bar{t}}(0)\right|\le \frac{p^2}{4}c.
\end{align}

Denote the coordinate by $z\coloneqq (z_1,\ldots,z_n)$. For any $0\neq X\in\mathbb{R}^n$, set
\[h^X\coloneqq \left(\sum_{j=1}^n\frac{X_jz_j}{((\alpha_0)_j+1)\|X\|}\right)z^{\alpha_0}.\]
By the assumption that $\xi_{\alpha}=0$ for all $\alpha>\alpha_0$ with $|\alpha|=|\alpha_0|+1$,
we see $(\xi\cdot h^X)(o)=0$. Since the $L^p$ norm of $h^X$ is uniformly bounded by a constant independent of $X$, it follows from \eqref{alig2} that
\begin{align}
\label{alig3}
\|h^X_t\|_p^p\le \|m_{\xi,p}(\cdot,o)\|_p^p\big(1+C_1|t|^2+o(|t|^2)\big)
\end{align}
for a constant $C_1>0$.

At $re^{i\theta}X=(re^{i\theta}X_1,\ldots,re^{i\theta}X_n)$, 
\begin{equation*}
    \begin{aligned}
        h^X=re^{i\theta}\|X\|(z-re^{i\theta}X)^{\alpha_0}&+\left(\sum_{j=1}^n\frac{X_j(z_j-re^{i\theta}X_j)}{((\alpha_0)_j+1)\|X\|}\right)(z-re^{i\theta}X)^{\alpha_0}\\
        &+\sum_{\alpha<\alpha_0}a_{\alpha}(z-re^{i\theta}X)^{\alpha},
    \end{aligned}
\end{equation*}
where $a_{\alpha}=O(r^2)$ for any $\alpha<\alpha_0$. Therefore 
\[(\xi\cdot h^X)(re^{i\theta}X)=re^{i\theta}\|X\|\xi_{\alpha_0}+O(r^2).\]

Since $\xi\cdot m_{\xi,p}(\cdot,o)$ is a holomorphic function satisfying that $(\xi\cdot m_{\xi,p}(\cdot,o))(o)=1$, $(\xi\cdot m_{\xi,p}(\cdot,o))(re^{i\theta}X)=1+O(r)$.

It then follows that
\[(\xi\cdot h^X_t)(re^{i\theta}X)=\left\{(\xi\cdot m_{\xi,p}(\cdot,o))(re^{i\theta}X)\right\}(1+tre^{i\theta}\|X\|\xi_{\alpha_0}+O(|t|r^2)),\]
which implies that
\begin{align}
\label{alig4}
|(\xi\cdot h^X_t)(re^{i\theta}X)|^p=\left|(\xi\cdot m_{\xi,p}(\cdot,o))(re^{i\theta}X)\right|^p|1+ptre^{i\theta}\|X\|\xi_{\alpha_0}+O(|t|r^2)|.
\end{align}

It follows from Proposition \ref{pro2}, inequality \eqref{alig3} and equality \eqref{alig4} that
\begin{align}
\label{alig5}
K_{\xi,p}(re^{i\theta}X)\ge\frac{\left|(\xi\cdot m_{\xi,p}(\cdot,o))(re^{i\theta}X)\right|^p}{\|m_{\xi,p}(\cdot,o)\|_p^p}\cdot\frac{|1+ptre^{i\theta}\|X\|\xi_{\alpha_0}+O(|t|r^2)|}{1+C_1|t|^2+o(|t|^2)}.
\end{align}

Let $t=\frac{pr\|X\|\bar{\xi}_{\alpha_0}}{2C_1}e^{-i\theta}$. Then inequality \eqref{alig5} reduces to

\[K_{\xi,p}(re^{i\theta }X)\ge \frac{\left|(\xi\cdot m_{\xi,p}(\cdot,o))(re^{i\theta}X)\right|^p}{\|m_{\xi,p}(\cdot,o)\|_p^p}\cdot\frac{1+\frac{p^2r^2\|X\|^2|\xi_{\alpha_0}|^2}{2C_1}+O(|r|^3)}{1+\frac{p^2r^2\|X\|^2|\xi_{\alpha_0}|^2}{4C_1}+o(|r|^2)}.\]
Consequently, 
\begin{align*}
&\liminf_{r\rightarrow 0+}\frac{1}{r^2} \left(\frac{1}{2\pi}\int_0^{2\pi}\log K_{\xi,p}(re^{i\theta }X)d\theta-\log K_{\xi,p}(o)\right)\\
\ge&\  \liminf_{r\rightarrow 0+}\frac{1}{r^2} \left(\frac{1}{2\pi}\int_0^{2\pi}\log \frac{\left|(\xi\cdot m_{\xi,p}(\cdot,o))(re^{i\theta}X)\right|^p}{\|m_{\xi,p}(\cdot,o)\|_p^p}d\theta-\log K_{\xi,p}(o)\right)+\frac{p^2\|X\|^2|\xi_{\alpha_0}|^2}{4C_1}\\
\ge&\  \frac{p^2\|X\|^2|\xi_{\alpha_0}|^2}{4C_1},
\end{align*}
where the second inequality follows from the plurisubharmonic property of 
\[\log \frac{\left|(\xi\cdot m_{\xi,p}(\cdot,o))\right|^p}{\|m_{\xi,p}(\cdot,o)\|_p^p}.\]

The proof of Proposition \ref{pro8} is done.
\end{proof}

More accurate results about the strictly plurisubharmonic property of $p$-Bergman kernels can be referred to \cite{CZ12}.

It is known that for a complex subvariety $S$ of $\Omega$, any $f\in A^2(\Omega\backslash S)$ can be extended to $\Omega$, which implies that $K_{\Omega\backslash S}(z)=K_{\Omega}(z)$ for $z\in\Omega\backslash S$. 
In particularly, $K_{\Omega\backslash S}(z)$ may not be exhaustive when $\Omega$ is a pseudoconvex domain.
On the other hand, Ning--Zhang--Zhou proved in \cite{nzz} that $K_{\Omega,p}(z)$ is exhaustive for $0<p<2$ when $\Omega$ is a pseudoconvex domain.

In order to prove Theorem \ref{thm2}, we need to recall the $L^p$ extension theorem for $p\in (0,2]$.
\begin{Lemma}[see \cite{berndtsson3,GZ-L2ext}]
\label{lemma6}
Let $\Omega\subset\mathbb{C}^n$ be a bounded pseudoconvex, $L\subset\mathbb{C}^n$ be a complex line such that $\Omega\cap L\neq\emptyset$, and $0<p\le 2$. For any $f\in A^p(\Omega\cap L)$, there exists $F\in A^p(\Omega)$ satisfying that $F|_{\Omega\cap L}=f$ and
\[\int_{\Omega}|F|^p\le C \int_{\Omega\cap L}|f|^p,\]
where $C$ is a constant depending only on $n$ and the diameter of $\Omega$. 
\end{Lemma}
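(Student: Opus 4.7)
The strategy is to reduce the $L^p$ extension problem for $0<p\le 2$ to the $L^2$ case via an iteration that combines the optimal $L^2$ Ohsawa--Takegoshi--B\l ocki--Guan--Zhou extension theorem with Hölder's inequality. When $p=2$, the statement is exactly the classical $L^2$ extension result applied to $f$ on $\Omega\cap L$, and the constant depending only on $n$ and $\operatorname{diam}\Omega$ is standard. So I would fix $p\in(0,2)$ and construct the extension as the limit of an iteratively built sequence.

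First, by a density argument (approximating $f$ by truncations of its Taylor expansion on the disk $\Omega\cap L$) I would reduce to the case $f\in A^p(\Omega\cap L)\cap A^2(\Omega\cap L)$. Then the standard $L^2$ extension produces $F_0\in A^2(\Omega)$ with $F_0|_L=f$, and since $\Omega$ is bounded, Hölder's inequality on $\Omega$ gives $F_0\in A^p(\Omega)$ as well. This supplies the initial extension. Next, given $F_k\in A^p(\Omega)$ extending $f$, I would apply the $L^2$ extension theorem with the plurisubharmonic weight $\varphi_k\coloneqq(2-p)\log|F_k|^2$ (psh because $2-p>0$ and $\log|F_k|^2$ is psh) to produce a new extension $F_{k+1}$ satisfying
\[
\int_\Omega |F_{k+1}|^2 |F_k|^{-(2-p)}\le C\int_{\Omega\cap L}|f|^2|f|^{-(2-p)}=C\int_{\Omega\cap L}|f|^p,
\]
where $C$ depends only on $n$ and $\operatorname{diam}\Omega$.

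The key algebraic step is to factor $|F_{k+1}|^p=\bigl(|F_{k+1}|^2|F_k|^{-(2-p)}\bigr)^{p/2}\cdot|F_k|^{p(2-p)/2}$ and apply Hölder's inequality with conjugate exponents $2/p$ and $2/(2-p)$, yielding
\[
\int_\Omega|F_{k+1}|^p\le\left(\int_\Omega|F_{k+1}|^2|F_k|^{-(2-p)}\right)^{p/2}\left(\int_\Omega|F_k|^p\right)^{(2-p)/2}.
\]
Writing $A\coloneqq\|f\|_{L^p(\Omega\cap L)}^p$ and $B_k\coloneqq\|F_k\|_{L^p(\Omega)}^p$, this becomes the recursion $B_{k+1}\le(CA)^{p/2}B_k^{(2-p)/2}$. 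Since the contraction factor $(2-p)/2$ lies strictly in $(0,1)$, a one-line fixed-point analysis on $\log B_k$ shows $\limsup_k B_k\le CA$, a bound that is insensitive to the size of $B_0$.

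Finally, the uniform $L^p$ bound on $\{F_k\}$ combined with the sub-mean-value property for $|F_k|^p$ makes the sequence a normal family, so a subsequence converges locally uniformly to some $F\in\mathcal{O}(\Omega)$. Local uniform convergence on $\Omega\cap L$ gives $F|_L=f$, and Fatou's lemma yields $\int_\Omega|F|^p\le CA=C\|f\|_{L^p(\Omega\cap L)}^p$. Undoing the density reduction handles the general $f\in A^p(\Omega\cap L)$. The main obstacle in this plan is the very first step --- producing an initial $F_0\in A^p(\Omega)$ extending $f$ to start the iteration --- since the classical $L^2$ extension does not immediately apply when $p<2$. This is resolved by the density reduction, after which the iterative contraction takes over and the final constant is controlled purely by the $L^2$ extension constant, hence depends only on $n$ and $\operatorname{diam}\Omega$.
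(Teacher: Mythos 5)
The paper does not prove this lemma; it cites it from Berndtsson--P\u{a}un and Guan--Zhou, so there is no internal proof to compare against. Assessed on its own merits, your iterated Ohsawa--Takegoshi argument is essentially correct and is in fact a recognized technique for deriving $L^p$ ($p<2$) estimates from $L^2$ extension. The core computations are sound: the weight $\varphi_k=(2-p)\log|F_k|^2$ is plurisubharmonic, the weighted $L^2$ extension yields $\int_\Omega|F_{k+1}|^2|F_k|^{-(2-p)}\le C\int_{\Omega\cap L}|f|^p$, H\"older with exponents $2/p$ and $2/(2-p)$ gives the recursion $B_{k+1}\le(CA)^{p/2}B_k^{(2-p)/2}$, and the linear recursion $x_{k+1}\le c+\lambda x_k$ with $\lambda=(2-p)/2<1$ does converge to $\log(CA)$ (and keeps $x_k$ uniformly bounded, which you need to invoke normality). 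The constant from the $L^2$ extension theorem is independent of the weight, so the iteration does not degrade the final constant.

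The one genuine weak spot is the very first reduction. You propose ``approximating $f$ by truncations of its Taylor expansion on the disk $\Omega\cap L$,'' but $\Omega\cap L$ need not be a disk, need not be simply connected, and need not even be connected; a global Taylor expansion is unavailable and there is in general no reason that $A^2(\Omega\cap L)$ is dense in $A^p(\Omega\cap L)$ for an arbitrary bounded planar open set. The fix is easy and standard, but it is a different argument: exhaust $\Omega$ by relatively compact pseudoconvex subdomains $\Omega_\nu\Subset\Omega$ (e.g.\ connected components of sublevel sets of $-\log\operatorname{dist}(\cdot,\partial\Omega)$). Then $\overline{\Omega_\nu}\cap L$ is a compact subset of $\Omega\cap L$, so $f$ is bounded there and hence lies in $A^2(\Omega_\nu\cap L)$; run your iteration on each $\Omega_\nu$ (the constant is uniform since $\operatorname{diam}\Omega_\nu\le\operatorname{diam}\Omega$), and pass to the limit via normal families and Fatou. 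With that repair the argument is complete.
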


Now we prove Theorem \ref{thm2} using the $L^p$ extension theorem.

\begin{proof}[Proof of Theorem \ref{thm2}]
Let $L\subset\mathbb{C}^n$ be a complex line such that $\Omega\cap L\neq\emptyset$. We can assume that
$L=\{z_2=\cdots=z_n=0\}$ by a unitary transformation.

For $z^0=(z_1^0,0,\ldots,0)\in\partial\Omega\cap L$, since the $L^p(\Omega\cap L)$ norm of the function $\frac{1}{z_1-z^0_1}$ is bounded by a constant $c_1$, it follows from Lemma \ref{lemma6} that there exists a holomorphic function $F$ on $\Omega$ whose $L^p(\Omega)$ norm is bounded by a constant $c_2$ such that $F|_{\Omega\cap L}=\frac{1}{z_1-z^0_1}$.

Choose any $z'=(z'_1,0,\ldots,0)\in \Omega\cap L$. Since $(z-z')^{\alpha_0}$ is uniformly bounded on $\Omega$, it can be obtained that $G\coloneqq F\cdot(z-z')^{\alpha_0}$ is a holomorphic function on $\Omega$ whose $L^p(\Omega)$ norm is bounded by a constant $c_3$.

Note that $(\partial^{\alpha_0}G)|_{z'}=\alpha_0 !\cdot F(z')$ and $(\partial^{\alpha}G)|_{z'}=0$ for any $\alpha\not \ge\alpha_0$, which implies that
\[(\xi\cdot G)(z')=F(z')=\frac{1}{z'_1-z_1^0}.\]
It follows from Proposition \ref{pro2} that
\[K_{\xi,p}(z')\ge \frac{1}{c_3^p \cdot |z'-z_0|^p}.\]
Since $L$ and $z'\in L$ are arbitrarily chosen, the proof of Theorem \ref{thm2} is done.
\end{proof}

It follows from the Ohsawa--Takegoshi $L^2$ extension theorem that 
\begin{align}
\label{al11}
K_{2}(z)\ge \frac{c'}{\delta(z)^2}
\end{align}
for a constant $c'$ when $\Omega$ is a bounded pseudoconvex domain with $C^2$ boundary (see \cite{ohsawa2}).
\begin{Proposition}
\label{pro9}
Let $\Omega$ be a bounded pseudoconvex domain with $C^2$ boundary in $\mathbb{C}^n$, and $\xi\in \ell_1^{(n)}$ satisfying $\xi_{\alpha_0}\neq 0$ for some $\alpha_{0}\in \mathbb{N}^{n}$ while $\xi_{\alpha}=0$ for any $\alpha> \alpha_0$. Then
\begin{align}
K_{\xi,2}(z)\ge \frac{c}{\delta(z)^2}\notag
\end{align}
for a constant $c$.
\end{Proposition}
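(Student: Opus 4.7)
The plan is to follow the same strategy as the proof of Theorem \ref{thm2}, but to replace the use of the $L^p$ extension theorem on a complex line with the direct Ohsawa--Takegoshi type lower bound \eqref{al11} on the ordinary Bergman kernel, which is available precisely because $\Omega$ has $C^2$ boundary.

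First, given $z' \in \Omega$, I would invoke \eqref{al11} together with the extremal characterization of $K_2(z')$ to produce a function $f \in A^2(\Omega)$ with $f(z') = 1$ and $\|f\|_{L^2(\Omega)}^2 \le \delta(z')^2/c'$. Next, to exploit the assumption that $\xi_\alpha = 0$ for every $\alpha > \alpha_0$, I would set $G(z) \coloneqq f(z)\,(z-z')^{\alpha_0}$; since $\Omega$ has finite diameter $R$, this lies in $A^2(\Omega)$ with $\|G\|_{L^2(\Omega)}^2 \le R^{2|\alpha_0|}\|f\|_{L^2(\Omega)}^2 \le R^{2|\alpha_0|}\delta(z')^2/c'$.

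The key computation is to evaluate $(\xi \cdot G)(z')$. Expanding $f(z) = \sum_\gamma d_\gamma (z-z')^\gamma$ near $z'$ gives $G(z) = \sum_\gamma d_\gamma (z-z')^{\gamma + \alpha_0}$, so the Taylor coefficient of $(z-z')^\beta$ in $G$ at $z'$ vanishes unless $\beta \ge \alpha_0$ componentwise. Combined with the hypothesis $\xi_\beta = 0$ for every $\beta > \alpha_0$, only the single index $\beta = \alpha_0$ contributes to the sum defining $(\xi \cdot G)(z')$, and we obtain
\[
(\xi \cdot G)(z') = \xi_{\alpha_0} d_0 = \xi_{\alpha_0} f(z') = \xi_{\alpha_0}.
\]
Applying Proposition \ref{pro2} then yields
\[
K_{\xi,2}(z') \ge \frac{|(\xi\cdot G)(z')|^2}{\|G\|_{L^2(\Omega)}^2} \ge \frac{|\xi_{\alpha_0}|^2 c'}{R^{2|\alpha_0|} \delta(z')^2},
\]
so one may take $c = c' |\xi_{\alpha_0}|^2 / R^{2|\alpha_0|}$.

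The argument here is conceptual rather than technical: the whole issue is to realize that the $p = 2$ case need not (and in fact cannot) rely on an $L^2$ extension from a complex line with uniform constant, but can instead be reduced to the single-input estimate \eqref{al11} for the ordinary Bergman kernel. The only subtlety is the correct choice of multiplier $(z-z')^{\alpha_0}$, which crucially uses the componentwise-order hypothesis $\xi_\alpha = 0$ for $\alpha > \alpha_0$ (rather than merely $|\alpha| > |\alpha_0|$) to guarantee that $(\xi \cdot G)(z')$ collapses to a single nonzero term.
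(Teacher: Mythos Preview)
Your proposal is correct and follows essentially the same route as the paper's own proof: start from the Ohsawa--Takegoshi type lower bound \eqref{al11} to produce a good $A^2$ function, multiply by $(z-z')^{\alpha_0}$ to force the Taylor coefficients to be supported on indices $\ge \alpha_0$, and then invoke the hypothesis $\xi_{\alpha}=0$ for $\alpha>\alpha_0$ so that $(\xi\cdot G)(z')$ collapses to the single term $\xi_{\alpha_0}f(z')$. Your exposition is in fact a bit cleaner (normalizing $f(z')=1$ and making the diameter bound $R^{2|\alpha_0|}$ explicit), and your closing remark correctly identifies why the line-extension argument of Theorem~\ref{thm2} breaks down at $p=2$.
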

\begin{proof}
By \eqref{al11}, there exists a constant $C_1$ such that for any $z_0\in\Omega$, there exists $F\in A^2(\Omega)$ satisfying that
\[
\int_{\Omega}|F|^2\le C_1\delta(z_0)^2|F(z_0)|^2 .
\]
Since $(z-z_0)^{\alpha_0}$ is uniformly bounded in $\Omega$, if we let $G=F\cdot (z-z_0)^{\alpha_0}$, then
\begin{equation}
\label{al12}
\int_{\Omega}|G|^2\le C_2\delta(z_0)^2|F(z_0)|^2 
\end{equation}
for a constant $C_2$. As $(\partial^{\alpha_0}G)|_{z_0}=\alpha_0 !\cdot F(z_0)$ and $(\partial^{\alpha}G)|_{z_0}=0$ for any $\alpha\not \ge\alpha_0$, we have $(\xi\cdot G)(z_0)=F(z_0)$. Then it follows from Proposition \ref{pro2} and \eqref{al12} that
\[K_{\xi,2}(z_0)\ge\frac{|(\xi\cdot(F\cdot(z-z_0)^{\alpha_0}))(z_0)|^2}{\int_{\Omega}|F\cdot (z-z_0)^{\alpha_0}|^2}\ge \frac{1}{C_2\delta(z_0)^2}.\]
Proposition \ref{pro9} is proved.
\end{proof}

\section{Higher order Bergman kernel and $\xi$-Bergman kernel}
In this section, we give the relations between the $L^p$ versions of higher order Bergman kernels and $\xi$-Bergman kernels. We prove Theorem \ref{thm-inf} for all $p\in [1,+\infty)$ by the Hahn--Banach Theorem, and give an alternative proof for the case $p=2$.

\subsection{Proof of Theorem \ref{thm-inf}}
In the following, we give the proof of Theorem \ref{thm-inf}. Indeed, Theorem \ref{thm-inf} is a special case of the following theorem.

Let $\mathbb{N}^n=\mathbf{E}_0\sqcup\mathbf{E}_1$ be a disjoint partition and $\eta=(\eta_{\alpha})\in\ell_1^{(n)}$ satisfying that $\mathbf{E}_0$ is a finite set and $\eta_{\alpha}=0$ for any $\alpha\in\mathbf{E}_0$.

For any $\xi=(\xi_{\alpha})\in\ell^{(n)}_1$, denote $\xi\in \mathbf{S}_{\eta}$ if $\xi_{\alpha}=\eta_{\alpha}$ for all $\alpha\in\mathbf{E}_1$. 

Let $D$ be a domain in $\mathbb{C}^n$, $z\in D$, and $p\in [1,+\infty]$. Denote
\begin{flalign*}
    \begin{split}
        \mathscr{A}^p_{\mathbf{E}_0}(D)&\coloneqq \left\{f\in A^p(D) \colon D^{\alpha}f(z)=0, \ \forall \alpha\in\mathbf{E}_0\right\},\\
        \kappa_{\eta,D}^{\mathbf{E}_0,p}(z)&\coloneqq \sup\left\{|(\eta\cdot f)(z)| \colon f\in \mathscr{A}_{\mathbf{E}_0}^p(D), \ \|f\|_{p,D}\le 1\right\},\\
        \kappa^p_{\xi,D}(z)&\coloneqq \left\{|(\xi\cdot f)(z) \colon f\in A^p(\Omega), \ \|f\|_{p,D}\le 1\right\}, \quad \forall\, \xi\in\ell^{(n)}_1,
    \end{split}
\end{flalign*}
where $A^{\infty}(D)\coloneqq L^{\infty}(D)\cap\mathcal{O}(D)$, and $\|f\|_{\infty, D}\coloneqq \sup_{w\in D}|f(w)|$. 

\begin{Theorem}
   The following equality holds for any $p\in [1,+\infty]$,
\[\kappa_{\eta,D}^{\mathbf{E}_0,p}(z)=\inf_{\xi\in\mathbf{S}_{\eta}}\kappa_{\xi,D}^p(z)=\min_{\xi\in\mathbf{S}_{\eta}}\kappa_{\xi,D}^p(z).\]
\end{Theorem}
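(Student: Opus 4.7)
The plan is to establish the trivial inequality $\kappa_{\eta,D}^{\mathbf{E}_0,p}(z) \le \kappa_{\xi,D}^p(z)$ for every $\xi \in \mathbf{S}_\eta$ directly from the definitions, and then to produce a specific $\xi^\ast \in \mathbf{S}_\eta$ for which equality holds, via the Hahn--Banach theorem together with a finite-codimension duality argument.

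For the easy direction, fix $\xi \in \mathbf{S}_\eta$. If $f \in \mathscr{A}_{\mathbf{E}_0}^p(D)$ then $D^\alpha f(z) = 0$ for every $\alpha \in \mathbf{E}_0$, while $\xi_\alpha = \eta_\alpha$ for every $\alpha \in \mathbf{E}_1$, so $(\xi\cdot f)(z) = (\eta\cdot f)(z)$. Taking the supremum of $|(\xi\cdot f)(z)|$ over the unit ball of $A^p(D)$ dominates the supremum of $|(\eta\cdot f)(z)|$ over the smaller unit ball of $\mathscr{A}_{\mathbf{E}_0}^p(D)$, which yields the inequality.

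For the reverse, I would first note that $\Lambda(f)\coloneqq(\eta\cdot f)(z)$ is continuous on $A^p(D)$ (by Proposition~\ref{pro3} for $p<+\infty$ and by Cauchy's estimate for $p=+\infty$) and that $\mathscr{A}_{\mathbf{E}_0}^p(D)$ is closed in $A^p(D)$, being the intersection of the kernels of the continuous evaluation functionals $E_\alpha(f)\coloneqq D^\alpha f(z)/\alpha!$, $\alpha\in\mathbf{E}_0$. The operator norm of $\Lambda|_{\mathscr{A}_{\mathbf{E}_0}^p(D)}$ is exactly $\kappa_{\eta,D}^{\mathbf{E}_0,p}(z)$, so Hahn--Banach furnishes an extension $L\in (A^p(D))^\ast$ with $\|L\|=\kappa_{\eta,D}^{\mathbf{E}_0,p}(z)$. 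Because $\mathbf{E}_0$ is finite, the monomials $\{(w-z)^\beta:\beta\in\mathbf{E}_0\}\subset A^p(D)$ satisfy $E_\alpha((w-z)^\beta)=\delta_{\alpha\beta}$, so the $E_\alpha$ are linearly independent and the finite-dimensional quotient $A^p(D)/\mathscr{A}_{\mathbf{E}_0}^p(D)$ has dual $\mathrm{span}\{E_\alpha:\alpha\in\mathbf{E}_0\}$. Since $L-\Lambda$ vanishes on $\mathscr{A}_{\mathbf{E}_0}^p(D)$, there exist unique constants $(c_\alpha)_{\alpha\in\mathbf{E}_0}$ with $L-\Lambda=\sum_{\alpha\in\mathbf{E}_0}c_\alpha E_\alpha$. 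Setting $\xi^\ast_\alpha\coloneqq c_\alpha$ for $\alpha\in\mathbf{E}_0$ and $\xi^\ast_\alpha\coloneqq\eta_\alpha$ for $\alpha\in\mathbf{E}_1$ produces an element of $\mathbf{S}_\eta$ which still belongs to $\ell_1^{(n)}$ because only finitely many entries of $\eta$ have been modified. Then $(\xi^\ast\cdot f)(z)=L(f)$ for all $f\in A^p(D)$, hence $\kappa_{\xi^\ast,D}^p(z)=\|L\|=\kappa_{\eta,D}^{\mathbf{E}_0,p}(z)$, completing the proof and showing that the infimum is attained.

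The main obstacle is the representation step: Hahn--Banach gives only an abstract bounded linear functional on $A^p(D)$, and one must recognise it as of the special form $(\xi^\ast\cdot\cdot)(z)$ with $\xi^\ast\in\ell_1^{(n)}$. This reduction relies crucially on the finiteness of $\mathbf{E}_0$, which makes the quotient $A^p(D)/\mathscr{A}_{\mathbf{E}_0}^p(D)$ finite-dimensional and identifies its dual explicitly with the span of finitely many derivative evaluations. Without this finiteness assumption, describing the annihilator of $\mathscr{A}_{\mathbf{E}_0}^p(D)$ in terms of $\ell_1^{(n)}$-type functionals would be a much more delicate matter, so the argument is essentially one-shot and exploits the hypothesis in a fundamental way.
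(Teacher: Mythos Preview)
Your approach via Hahn--Banach is the same as the paper's, and the core idea is correct. The paper, however, proceeds in three separate steps: first it treats bounded $D$ (where polynomials belong to $A^p(D)$, so one may decompose $f=f_0+f_1$ with $f_0$ the Taylor polynomial supported on $\mathbf{E}_0$ and read off $\xi_\alpha=\Lambda((w-z)^\alpha)$); then it passes to a general $D$ by exhausting with bounded subdomains to get the infimum equality; and finally it argues separately that the infimum is attained, using continuity of $\xi\mapsto\kappa_{\xi,D}^p(z)$ together with a compactness argument on the finite-dimensional affine space $\mathbf{S}_\eta$.

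Your route is more economical: you extract the minimiser $\xi^\ast$ in one shot by identifying the annihilator of $\mathscr{A}_{\mathbf{E}_0}^p(D)$ inside $(A^p(D))^\ast$, with no approximation or separate compactness step. There is one small slip, though: you assert that the monomials $(w-z)^\beta$, $\beta\in\mathbf{E}_0$, lie in $A^p(D)$, which need not hold when $D$ is unbounded (or when $p=\infty$). Fortunately this claim is not actually needed. The conclusion you want---that every bounded functional vanishing on $\bigcap_{\alpha\in\mathbf{E}_0}\ker E_\alpha$ lies in $\mathrm{span}\{E_\alpha:\alpha\in\mathbf{E}_0\}$---is a standard linear-algebra fact valid for any finite family of continuous linear functionals, independent of whether they are linearly independent or whether the monomials happen to be in the space. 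With this minor correction your argument goes through for arbitrary $D$ and all $p\in[1,+\infty]$ simultaneously, and is in fact shorter than the paper's three-step proof.
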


\begin{proof}
    It is clear that $\kappa_{\eta,D}^{\mathbf{E}_0,p}(z)\le \kappa_{\xi,D}^p(z)$ for any $\xi\in\mathbf{S}_{\eta}$. Then we may assume $\kappa_{\eta,D}^{\mathbf{E}_0,p}(z)<+\infty$.   
    
    We prove the theorem in three steps.

    \textbf{Step 1}: Suppose $D$ is bounded, and we prove $\kappa_{\eta,D}^{\mathbf{E}_0,p}(z)=\min_{\xi\in\mathbf{S}_{\eta}}\kappa_{\xi,D}^p(z)$.

    In case $D$ is bounded, any polynomial on $\mathbb{C}^n$ belongs to $A^p(D)$. Then one can see $\mathscr{A}^p_{\mathbf{E}_0}(D)$ is a non-empty closed subspace of the Banach space $A^p(D)$, and for any $f\in\mathscr{A}^p_{\mathbf{E}_0}(D)$,
    \[|(\eta\cdot f)(z)|\le \kappa_{\eta,D}^{\mathbf{E}_0,p}(z)\cdot \|f\|_{p,D}.\]
    By the Hahn--Banach Theorem, the continuous linear functional $\Lambda_{\eta}\colon f\mapsto (\eta\cdot f)(z)$ on $\mathscr{A}^p_{\mathbf{E}_0}(D)$ can be extended to a continuous linear functional $\Lambda$ on $A^p(D)$, with $\Lambda|_{\mathscr{A}^p_{\mathbf{E}_0}(D)}=\Lambda_{\eta}$, and
    \[|\Lambda(f)|\le \kappa_{\eta,D}^{\mathbf{E}_0,p}(z)\cdot \|f\|_{p,D}, \quad \forall\, f\in A^p(D).\]

    Next, we prove that there exists some $\xi\in\mathbf{S}_{\eta}$ such that $\Lambda_{\xi}=\Lambda$, where $\Lambda_{\xi}\colon f\mapsto (\xi\cdot f)(z)$. This will imply $\xi\in\mathbf{S}_{\eta}$ and $\|\Lambda_{\xi}\|\le \kappa_{\eta,D}^{\mathbf{E}_0,p}(z)$, which yields $\kappa^p_{\xi,D}(z)=\kappa_{\eta,D}^{\mathbf{E}_0,p}(z)$. Let $f\in A^p(D)$. By Taylor's expansion, one can uniquely decompose $f$ into $f=f_0+f_1$, where
    \begin{flalign*}
        \begin{split}
            \left\{\alpha \colon D^{\alpha}f_0(z)\neq 0\right\}&\subset \mathbf{E}_0, \\
             \left\{\alpha \colon D^{\alpha}f_1(z)\neq 0\right\}&\subset\mathbf{E}_1.
        \end{split}
    \end{flalign*}
    Since $\mathbf{E}_0$ is a finite set, $f_0$ is a polynomial, which belongs to $A^p(D)$. Then $f_1\in \mathscr{A}^p_{\mathbf{E}_0}(D)$, and $\Lambda(f_1)=\Lambda_{\eta}(f_1)$. We get
    \[\Lambda(f)=\Lambda(f_0)+\Lambda(f_1)=\Lambda(f_0)+\Lambda_{\eta}(f_1),\]
    where
    \[\Lambda(f_0)=\sum_{\alpha\in\mathbf{E}_0}\Lambda(z^{\alpha})\cdot\frac{D^{\alpha}f(z)}{\alpha!}, \quad \forall\, f\in A^p(D).\]
    Let $\xi=(\xi_{\alpha})\in\ell_1^{(n)}$, where
    \begin{equation*}
        \xi_{\alpha}=
            \begin{cases}
                \Lambda(z^{\alpha}), \ & \alpha\in\mathbf{E}_0,\\
                \eta_{\alpha}, \ & \alpha\in\mathbf{E}_1.\\
            \end{cases}
    \end{equation*}
    Then $\xi\in\mathbf{S}_{\eta}$ satisfying $\Lambda_{\xi}=\Lambda$, which implies that $\kappa_{\eta,D}^{\mathbf{E}_0,p}(z)=\min_{\xi\in\mathbf{S}_{\eta}}\kappa_{\xi,D}^p(z)$.

    \textbf{Step 2}: For a general domain $D$ in $\mathbb{C}^n$, we prove $\kappa_{\eta,D}^{\mathbf{E}_0,p}(z)=\inf_{\xi\in\mathbf{S}_{\eta}}\kappa_{\xi,D}^p(z)$.

    Let $D_1\subset D_2\subset\cdots\subset D_j\subset\cdots$ be an increasing sequence of bounded domains in $\mathbb{C}^n$ such that $D=\bigcup_{j=1}^{\infty}D_j$ and $z\in D_1$. By \textbf{Step 1},
    \[\kappa_{\eta,D_j}^{\mathbf{E}_0,p}(z)=\min_{\xi\in\mathbf{S}_{\eta}}\kappa_{\xi,D_j}^p(z), \quad \forall\, j\ge 1.\]
    By Montel's Theorem, we also have (e.g. see the proof of Proposition \ref{prop-exhaustion})
\[\lim_{j\to \infty}\kappa_{\eta,D_j}^{\mathbf{E}_0,p}(z)=\kappa_{\eta,D}^{\mathbf{E}_0,p}(z),\]
and
\[\lim_{j\to \infty}\kappa_{\xi,D_j}^p(z)=\kappa_{\xi,D_j}^p(z), \quad \forall\, \xi\in\mathbf{S}_{\eta}.\]  
    Consequently, $\kappa_{\eta,D}^{\mathbf{E}_0,p}(z)=\inf_{\xi\in\mathbf{S}_{\eta}}\kappa_{\xi,D}^p(z)$.

    \textbf{Step 3}: We prove $\inf_{\xi\in\mathbf{S}_{\eta}}\kappa_{\xi,D}^p(z)=\min_{\xi\in\mathbf{S}_{\eta}}\kappa_{\xi,D}^p(z)$.

    Note that $\mathbf{S}_{\eta}$ is a linear space of finite dimension ($\dim\mathbf{S}_{\eta}=|\mathbf{E}_0|$). Proposition \ref{pro4} shows that the function $\xi\mapsto \kappa_{\xi,D}^p(z)$ is a continuous function on $\mathbf{S}_{\eta}$. For any $\alpha\in\mathbf{E}_0$, we may assume
    \[\big\{f\in A^p(D) \colon D^{\alpha}f(z)\neq 0\big\}\neq\emptyset.\]
    Otherwise, the value of $\xi_{\alpha}$ ($\alpha$-th position of $\xi$) does not affect $\kappa^p_{\xi,D}(z)$. Then now, one can find that
    \[\lim_{|\xi_{\alpha}|\to+\infty}\kappa_{\xi,D}^p(z)=+\infty,\]
    where the limit is taken when the values of the other positions of $\xi$ are fixed except the $\alpha$-th position. It follows that $\inf_{\xi\in\mathbf{S}_{\eta}}\kappa_{\xi,D}^p(z)=\inf_{\xi\in \mathscr{K}}\kappa_{\xi,D}^p(z)$, where $\mathscr{K}$ is a compact subset of $\mathbf{S}_{\eta}$. Thus, the continuity of $\kappa_{\xi,D}^p(z)$ in $\xi$ implies that $\inf_{\xi\in\mathbf{S}_{\eta}}\kappa_{\xi,D}^p(z)=\min_{\xi\in\mathbf{S}_{\eta}}\kappa_{\xi,D}^p(z)$ holds.
    
    The proof is complete.
\end{proof}

\subsection{An alternative proof of Theorem \ref{thm-inf} for $p=2$}
Before the proof, we recall a construction of the complete orthonormal basis of Bergman spaces. For any two multi-indices $\alpha,\beta\in\mathbb{N}^n$, where $\alpha=(\alpha_1,\ldots,\alpha_n)$, $\beta=(\beta_1,\ldots,\beta_n)$, we write $\alpha\prec\beta$, if

(1) $|\alpha|<|\beta|$; or

(2) $|\alpha|=|\beta|$, and there exists $j\in \{1,\ldots,n-1\}$ such that $\alpha_n=\beta_n$, $\ldots$, $\alpha_{j+1}=\beta_{j+1}$, $\alpha_{j}<\beta_{j}$. 

\begin{Lemma}[cf. {\cite[Appendix]{BG3}}]\label{lem-basis}
    Let $\Omega\subset\mathbb{C}^n$ be a domain, and fix $z\in \Omega$. There exists a subset $\mathbf{E}$ of $\mathbb{N}^n$, and a complete orthonormal basis $\{\sigma_{\alpha}\}_{\alpha\in\mathbf{E}}$ of $A^2(\Omega)$, such that 
    \begin{enumerate}
    \item $\min\{\gamma\in\mathbb{N}^n \colon (D^{\gamma}\sigma_{\alpha})(z_0)\neq 0\}=\alpha$ for every $\alpha\in \mathbf{E}$, where the minimum is w.r.t. `$\prec$';
    \item$(D^{\beta}\sigma_{\alpha})(z_0)=0$, for every $\alpha \in\mathbf{E}$ and every $\beta\in\mathbb{N}^n$ with $\beta\prec\alpha$.
\end{enumerate} 
\end{Lemma}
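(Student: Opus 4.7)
The plan is to build $\{\sigma_\alpha\}$ by Gram--Schmidt orthogonalization along the well-order $\prec$ on $\mathbb{N}^n$, writing $z_0$ for the fixed point. First I would observe that $\prec$ is a well-order on $\mathbb{N}^n$ whose every initial segment $\{\beta\colon\beta\prec\alpha\}\subset\{\beta\colon|\beta|\le|\alpha|\}$ is finite, so $\mathbb{N}^n$ may be enumerated as an ordinary sequence $\alpha^{(1)}\prec\alpha^{(2)}\prec\cdots$ and induction along $\prec$ suffices.

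For each $\alpha\in\mathbb{N}^n$ I would introduce the closed subspaces
\[V_{\prec\alpha}\coloneqq\{f\in A^2(\Omega)\colon D^\beta f(z_0)=0\text{ for all }\beta\prec\alpha\}\]
and $V_{\preceq\alpha}\coloneqq V_{\prec\alpha}\cap\ker(f\mapsto D^\alpha f(z_0))$. Closedness holds because Cauchy's estimate makes each $f\mapsto D^\gamma f(z_0)$ a continuous linear functional on $A^2(\Omega)$. Since $V_{\preceq\alpha}$ is cut out of $V_{\prec\alpha}$ by one linear condition, $\dim(V_{\prec\alpha}/V_{\preceq\alpha})\in\{0,1\}$. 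Define
\[\mathbf{E}\coloneqq\{\alpha\in\mathbb{N}^n\colon\dim(V_{\prec\alpha}/V_{\preceq\alpha})=1\},\]
and for each $\alpha\in\mathbf{E}$ let $\sigma_\alpha$ be a unit vector in the one-dimensional orthogonal complement $V_{\prec\alpha}\ominus V_{\preceq\alpha}$. Then (2) is immediate from $\sigma_\alpha\in V_{\prec\alpha}$, while $\sigma_\alpha\notin V_{\preceq\alpha}$ gives $D^\alpha\sigma_\alpha(z_0)\ne 0$, which together with (2) gives (1). Mutual orthonormality of the family is also built in: for $\beta\prec\alpha$ with both in $\mathbf{E}$, one has $\sigma_\alpha\in V_{\prec\alpha}\subseteq V_{\preceq\beta}$, while $\sigma_\beta\perp V_{\preceq\beta}$ by construction, so $\langle\sigma_\alpha,\sigma_\beta\rangle=0$.

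The main step will be completeness of $\{\sigma_\alpha\}_{\alpha\in\mathbf{E}}$. I would take any $f\in A^2(\Omega)$ with $\langle f,\sigma_\alpha\rangle=0$ for every $\alpha\in\mathbf{E}$ and prove by induction along $\prec$ that $D^\gamma f(z_0)=0$ for all $\gamma\in\mathbb{N}^n$; since $\Omega$ is connected, the identity principle then forces $f\equiv 0$. The inductive hypothesis $D^\beta f(z_0)=0$ for $\beta\prec\gamma$ places $f$ in $V_{\prec\gamma}$. If $\gamma\notin\mathbf{E}$, then $V_{\prec\gamma}=V_{\preceq\gamma}$, and $D^\gamma f(z_0)=0$ holds automatically. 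If $\gamma\in\mathbf{E}$, decompose $f=c\sigma_\gamma+f_1$ along the orthogonal splitting $V_{\prec\gamma}=\mathbb{C}\sigma_\gamma\oplus V_{\preceq\gamma}$; pairing with $\sigma_\gamma$ gives $c=0$, hence $f\in V_{\preceq\gamma}$ and again $D^\gamma f(z_0)=0$.

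I expect the only non-routine ingredient to be the closedness of each $V_{\prec\alpha}$, which however follows at once from Cauchy's estimate. Beyond that the argument is a countable sequence of one-dimensional Gram--Schmidt moves, and the completeness induction collapses to the clean dichotomy $\gamma\in\mathbf{E}$ versus $\gamma\notin\mathbf{E}$.
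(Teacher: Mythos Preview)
The paper does not supply a proof of this lemma; it is quoted with a reference to \cite[Appendix]{BG3}. Your Gram--Schmidt construction along the well-order $\prec$ is correct and is the standard way such a basis is produced: the key observations---that each $V_{\prec\alpha}$ is closed (continuity of $f\mapsto D^\gamma f(z_0)$), that $\dim(V_{\prec\alpha}/V_{\preceq\alpha})\le 1$, and that vanishing of all derivatives at $z_0$ forces $f\equiv 0$ on the connected domain---are exactly what the construction in the cited reference rests on as well. Nothing is missing.
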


In the following, we give an alternative proof of Theorem \ref{thm-inf} for $p=2$, which provides an approach to computing the minimizing functional $\xi$.

\begin{proof}[An alternative proof of Theorem \ref{thm-inf} for $p=2$]
We may assume $\inf_{\xi\in\mathbf{S}_H}K_{\xi,\Omega}(z)>0$, otherwise the result is trivial. According to (\ref{eq-KHKxi}), we only need to prove that there exists some $\xi_H\in\mathbf{S}_H$ such that $K^H_{\Omega}(z)=K_{\xi_H,\Omega}(z)$.

Using Lemma \ref{lem-basis}, we can get a complete orthonormal basis $\{\sigma_{\alpha}\}_{\alpha\in\mathbf{E}}$ of $A^2(\Omega)$ satisfying the conditions in Lemma \ref{lem-basis}. For any $f\in A^2(\Omega)$, if we write
\begin{equation*}
    f=\sum_{\alpha\in\mathbf{E}}a_{\alpha}\sigma_{\alpha},
\end{equation*}
where $a_{\alpha}\in\mathbb{C}$, then 
\begin{equation}\label{eq-Dbetafz}
        (D^{\beta}f)(z)=\sum_{\alpha\in\mathbf{E}}a_{\alpha}(D^{\beta}\sigma_{\alpha})(z)=\sum_{\alpha\preceq\beta,\, \alpha\in\mathbf{E}}a_{\alpha}(D^{\beta}\sigma_{\alpha})(z).
\end{equation}
Now for any $\xi=(\xi_{\beta})\in\mathbf{S}_H$, we can write
\begin{equation*}
    (\xi\cdot f)(z)=\sum_{|\beta|\le k}\frac{\xi_{\beta}}{\beta!}(D^{\beta}f)(z).
\end{equation*}
Combining with (\ref{eq-Dbetafz}), we have
\begin{align*}
    \begin{split}
        (\xi\cdot f)(z)&=\sum_{|\beta|\le k}\frac{\xi_{\beta}}{\beta!}\left(\sum_{\alpha\preceq\beta,\, \alpha\in\mathbf{E}}a_{\alpha}(D^{\beta}\sigma_{\alpha})(z)\right)\\
        &=\sum_{|\alpha|\le k,\, \alpha\in\mathbf{E}}\left(\sum_{\beta\succeq\alpha, |\beta|\le k}\xi_{\beta}\frac{(D^{\beta}\sigma_{\alpha})(z)}{\beta!}\right)a_{\alpha}.
    \end{split}
\end{align*}
Set
\begin{equation}\label{calpha}
    c_{\alpha}\coloneqq \sum_{\beta\succeq\alpha,\, |\beta|\le k}\xi_{\beta}\frac{(D^{\beta}\sigma_{\alpha})(z)}{\beta!}
\end{equation}
for each $\alpha\in\mathbf{E}$ with $|\alpha|\le k$. Then using the Cauchy--Schwarz inequality, we can compute that
\begin{align*}
    \begin{split}
        K_{\xi,\Omega}(z)&\coloneqq\sup_{f\in A^2(\Omega)}\frac{|(\xi\cdot f)(z)|^2}{\int_{\Omega}|f|^2}\\
        &=\sup_{\{a_{\alpha}\}_{\alpha\in\mathbf{E}}\in l^2}\frac{\left|\sum_{|\alpha|\le k, \alpha\in\mathbf{E}}c_{\alpha}a_{\alpha}\right|^2}{\sum_{\alpha\in\mathbf{E}}|a_{\alpha}|^2}\\
        &=\sum_{|\alpha|\le k,\, \alpha\in\mathbf{E}}|c_{\alpha}|^2,
    \end{split}
\end{align*}
where the supremum is achieved if and only if
\begin{equation*}
    f=C\sum_{\alpha\le k,\, \alpha\in\mathbf{E}}\bar{c}_{\alpha}\sigma_{\alpha},
\end{equation*}
for some $C\in\mathbb{C}\setminus\{0\}$. Note that
\begin{equation*}
    (D^{\beta}\sigma_{\alpha})(z)=0, \quad \forall\, \alpha\in\mathbf{E}, \ \beta\in\mathbb{N}^n, \ \beta\prec\alpha.
\end{equation*}
If we let
\begin{equation}\label{eq-calpha}
    c_{\alpha}=0, \quad \forall\, \alpha\in\mathbf{E}, \ |\alpha|<k,
\end{equation}
then
\begin{equation*}
    K_{\xi,\Omega}(z)=\frac{|(\xi\cdot f_0)(z)|^2}{\int_{\Omega}|f_0|^2},
\end{equation*}
for some $f_0\in A^2(\Omega)$ with $\|f_0\|_{L^2(\Omega)}=1$ and $f_0^{(j)}(z)=0$, $j=0,\ldots,k-1$. Since $\xi\in\mathbf{S}_H$, we also have
\begin{equation*}
    (\xi\cdot f_0)(z)=P_H(f_0)(z).
\end{equation*}
Now it is clear that
\begin{equation*}
    K_{\xi, \Omega}(z)=K^H_{\Omega}(z)
\end{equation*}
for every $\xi=(\xi_{\beta})\in\mathbf{S}_H$ such that (\ref{eq-calpha}) holds. Now we only need to prove the existence of such $\xi$. Actually, the equation (\ref{eq-calpha}) is equivalent to the following linear equation system:
\begin{equation}
    \sum_{\beta\succeq\alpha, \, |\beta|\le k}\xi_{\beta}\frac{(D^{\beta}\sigma_{\alpha})(z)}{\beta!}=0, \quad \forall\, \alpha\in\mathbf{E}, \ |\alpha|\le k-1.
\end{equation}
This linear equation system must have solutions since
\begin{equation*}
    (D^{\alpha}\sigma_{\alpha})(z)\neq 0, \quad \forall\, \alpha\in\mathbf{E}.
\end{equation*}
In summary, we get
\begin{equation*}
    K^H_{\Omega}(z)=\inf_{\xi\in\mathbf{S}_H}K_{\xi,\Omega}(z),
\end{equation*}
and the infimum can be achieved, i.e.,
\begin{equation*}
    K^H_{\Omega}(z)=\inf_{\xi\in\mathbf{S}_H}K_{\xi,\Omega}(z)=\min_{\xi\in\mathbf{S}_H}K_{\xi,\Omega}(z).
\end{equation*} 
\end{proof}

\section{Proofs of Theorem \ref{thm-Kp} and its corollaries}
Before giving the proofs of Theorem \ref{thm-Kp} and its corollaries, we firstly recall the log-plurisubharmonicity of fiberwise $\xi$-Bergman kernels, and demonstrate the $L^p$ version for $p\in (0,2]$.

\subsection{Log-plurisubharmonicity}\label{sec-logpsh}
Let $\Omega\subset\mathbb{C}^{n+1}$ be a pseudoconvex domain, with coordinate $(z,\zeta)$, where $z\in\mathbb{C}^n$, $\zeta\in\mathbb{C}$. Let $\varpi_1$, $\varpi_2$ be the natural projections $\varpi_1(z,\zeta)=\zeta$, $\varpi_2(z,\zeta)=z$ on $\Omega$, $\omega\coloneqq \varpi_1(\Omega)$. Denote by $K_{\xi,\Omega_{\zeta}}(z)$ the Bergman kernels on the domains $\Omega_{\zeta}\coloneqq \varpi_1^{-1}(\zeta)\cap \Omega$ with respect to some fixed $\xi\in \ell_1^{(n)}$.

The following generalization of Berndtsson's log-plurisubharmonicity of fiberwise Bergman kernels was obtained in \cite{BG1}.
\begin{Proposition}[\cite{BG1}]\label{logpsh-xi}
    $\log K_{\xi,\Omega_{\zeta}}(z)$ is a plurisubharmonic function on $\Omega$ with respect to $(z,\zeta)$.
\end{Proposition}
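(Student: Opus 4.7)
The plan is to adapt Berndtsson's argument for the ordinary fiberwise Bergman kernel, replacing positivity of direct image bundles by the optimal $L^2$ extension theorem (B\l ocki; Guan--Zhou) and the Guan--Zhou iteration method. First, since $K_{\xi,\Omega_\zeta}(z)$ is continuous in $(z,\zeta)\in\Omega$ (by the regularity and convergence results of Section 2, in particular Proposition \ref{pro4} applied fiberwise together with Proposition \ref{prop-exhaustion} to compare nearby fibers), upper semicontinuity of $\log K_{\xi,\Omega_\zeta}(z)$ comes for free. Plurisubharmonicity then reduces to checking subharmonicity along an arbitrary holomorphic disk $\Phi(t)=(z(t),\zeta(t))$, $t\in\bar\Delta$, lying in $\Omega$. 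By Hartogs' characterization and density of real parts of holomorphic functions among harmonic dominators, this in turn reduces to the following statement: for every holomorphic $h$ on a neighborhood of $\bar\Delta$ with $K_{\xi,\Omega_{\zeta(t)}}(z(t))\le |e^{h(t)}|^2$ on $\partial\Delta$, the same inequality persists at $t=0$.

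To establish this, let $(z_0,\zeta_0)=\Phi(0)$ and use Proposition \ref{pro1} to pick an $L^2$-extremal $f_0\in A^2(\Omega_{\zeta_0})$ with $(\xi\cdot f_0)(z_0)=1$ and $\|f_0\|^2_{L^2(\Omega_{\zeta_0})}=K_{\xi,\Omega_{\zeta_0}}(z_0)^{-1}$. Applying the optimal $L^2$ extension theorem on a pseudoconvex exhaustion of $\Omega$, extend $f_0$ to a holomorphic function $F$ on (a relatively compact subdomain of) $\Omega$ containing $\Omega_{\zeta_0}$, with sharp $L^2$ control
\[
\int_{\Omega} |F(z,\zeta)|^2 e^{-\varphi(\zeta)}\, d\lambda(z,\zeta) \le \int_{\Omega_{\zeta_0}}|f_0(z)|^2 e^{-\varphi(\zeta_0)}\, d\lambda(z),
\]
for a psh weight $\varphi$ of the form $\varphi(\zeta)=2\re h(\zeta)+\text{(negligible term on }\Omega_{\zeta_0}\text{)}$, chosen so that the sharp constant is $1$. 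Then $F(\cdot,\zeta(t))\in A^2(\Omega_{\zeta(t)})$ is a legitimate test function in Proposition \ref{pro2}, yielding $|(\xi\cdot F(\cdot,\zeta(t)))(z(t))|^2\le K_{\xi,\Omega_{\zeta(t)}}(z(t))\int_{\Omega_{\zeta(t)}}|F|^2$. The function $g(t):=(\xi\cdot F(\cdot,\zeta(t)))(z(t))\cdot e^{-h(t)}$ is holomorphic in $t$ (using Lemma \ref{lemma2} together with holomorphic dependence of $F(\cdot,\zeta)$ on $\zeta$), and combining the extension bound with the boundary hypothesis on $K$ shows $|g|\le 1$ on (a dense subset of) $\partial\Delta$; the one-variable maximum principle then gives $|g(0)|\le 1$, which rearranges to $K_{\xi,\Omega_{\zeta_0}}(z_0)\le |e^{h(0)}|^2$.

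The main obstacle is the sharpness of the extension in the second step: one needs constant exactly $1$ (or a sequence of extensions with constants tending to $1$) for the maximum principle to yield an \emph{equality}-strength conclusion at $t=0$, since any multiplicative loss would degrade subharmonicity into mere local boundedness. This is precisely where the Guan--Zhou iteration and the sharp B\l ocki constant enter: one constructs the weight $\varphi$ by combining $2\re h(\zeta)$ with a $\log$-pole whose mass is concentrated on $\Omega_{\zeta_0}$, taking a limit of approximating weights to recover constant $1$ while ensuring $F|_{\Omega_{\zeta_0}}=f_0$. Once this sharp extension is in place, integrating the $L^2$ bound along the $\zeta(t)$-direction and invoking the boundary hypothesis forces $|g(t)|\le 1$ on $\partial\Delta$, and the remainder of the argument is routine complex analysis in one variable.
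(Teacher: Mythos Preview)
Your proposal has a genuine gap in the central step. You need $|g(t)|\le 1$ on $\partial\Delta$, but the optimal $L^2$ extension only controls the \emph{total} integral $\int_\Omega |F|^2 e^{-\varphi}$, not the individual fiber norms $\int_{\Omega_{\zeta(t)}}|F|^2$. From the fiberwise bound and the boundary hypothesis you get only
\[
|g(t)|^2 \le K_{\xi,\Omega_{\zeta(t)}}(z(t))\,|e^{-h(t)}|^2 \int_{\Omega_{\zeta(t)}}|F|^2 \le \int_{\Omega_{\zeta(t)}}|F|^2,
\]
and there is no mechanism that forces $\int_{\Omega_{\zeta(t)}}|F|^2\le 1$ for each boundary $t$; ``integrating the $L^2$ bound along the $\zeta(t)$-direction'' still gives an \emph{average} control, not a pointwise one. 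There is also a normalization slip: with $(\xi\cdot f_0)(z_0)=1$ one has $g(0)=e^{-h(0)}$, so $|g(0)|\le 1$ merely says $\re h(0)\ge 0$ and does \emph{not} rearrange to $K_{\xi,\Omega_{\zeta_0}}(z_0)\le |e^{h(0)}|^2$. Finally, the weight $\varphi$ you propose lives only on the image disk $\zeta(\Delta)$, not on the base $\omega$, so it is unclear how to use it in the extension theorem at all.

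The paper's route (imported from \cite{BG1} and reproduced for general $p\in(0,2]$ in the proof of Proposition~\ref{prop-p.BK.log.psh}) bypasses all of this by avoiding the harmonic-majorant/maximum-principle reduction entirely. One works on a disk $\Delta(\zeta_0,r)$ in the base, extends the extremal $f$ on $\Omega_{\zeta_0}$ to $F$ on $\Omega$ with the sharp constant $\pi r^2$, and then chains Fubini, Jensen's inequality for the concave function $\log$, the fiberwise inequality $\int_{\Omega_\zeta}|F_\zeta|^2\ge |(\xi\cdot F_\zeta)(z_0)|^2/K_{\xi,\Omega_\zeta}(z_0)$, and the subharmonicity of $\zeta\mapsto \log|(\xi\cdot F_\zeta)(z_0)|$. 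This delivers the sub-mean value inequality for $\log K_{\xi,\Omega_\zeta}(z_0)$ directly; the Jensen step is exactly what converts the total integral control into what you need, and no fiberwise bound on $F$ is ever required.
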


\begin{Remark}
In \cite{BG1}, the authors assume that $\Omega_{\zeta}$ is bounded for any $\zeta\in \omega$, but it is clear that this assumption is not needed (one may modify the original proof, or approximate the unbounded pseudoconvex domain by bounded pseudoconvex domains).

The main method of getting Proposition \ref{logpsh-xi} is using the optimal $L^2$ extension theorem and Guan--Zhou Method.
\end{Remark}

Considering that there is also optimal $L^p$ extension theorem for any $p\in (0,2]$, one can expect to establish the log-plurisubharmonicity for fiberwise $p$-Bergman kernels with respect to some fixed functional $\xi$.

\begin{Lemma}[Optimal $L^p$ extension theorem, \cite{GZ-L2ext}]\label{lem-Lp.ext}
     Let $\Omega\subset\mathbb{C}^{n+1}$ be a pseudoconvex domain, with coordinate $(z,\zeta)$, where $z\in\mathbb{C}^n$, $\zeta\in\mathbb{C}$. Let $\varpi$ be the natural projection $\varpi(z,\zeta)=\zeta$ on $\Omega$, $\Omega'\coloneqq \varpi(\Omega)$. Suppose $\Omega'=\Delta(\zeta_0,r)$ is a disk centered at $\zeta_0\in\mathbb{C}$ with radius $r>0$. Let $p\in (0,2]$. Then for any $f\in A^p(\Omega_{\zeta_0})$, where $\Omega_{\zeta_0}\coloneqq \varpi^{-1}(\zeta_0)\cap\Omega$, there exists $F\in A^p(\Omega)$ such that $F|_{\Omega_{\zeta_0}}=f$, and
     \begin{equation*}
         \frac{1}{\pi r^2}\int_{\Omega}|F|^p\le\int_{\Omega_{\zeta_0}}|f|^p.
     \end{equation*}
\end{Lemma}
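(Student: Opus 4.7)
The plan is to reduce the $L^p$ statement iteratively to the optimal weighted $L^2$ extension theorem of B\l{}ocki--Guan--Zhou (which is the $p=2$ case). First I would exhaust $\Omega$ from inside by an increasing sequence of relatively compact, smoothly bounded pseudoconvex subdomains $\Omega^{(j)}\Subset\Omega^{(j+1)}$ with $\bigcup_j\Omega^{(j)}=\Omega$, and then, using the submean inequality $\sup_K|F|^p\le C_K\|F\|_p^p$ together with Montel's theorem (to extract a locally uniformly convergent subsequence of the extensions on the $\Omega^{(j)}$) and Fatou's lemma (to pass to the limit in the integral estimate), reduce to the case where $\Omega$ itself is bounded. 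For $p=2$ the assertion is then the B\l{}ocki--Guan--Zhou theorem, so assume $p\in(0,2)$.

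On the bounded $\Omega$, I would construct $F$ as the limit of an iteration $(F_k)_{k\ge0}\subset\mathcal{O}(\Omega)$ of extensions of $f$. Take $F_0$ to be any holomorphic extension of $f$, which exists by the classical (non-sharp) Ohsawa--Takegoshi theorem and automatically lies in $L^p(\Omega)$ since $\Omega$ is bounded. Given $F_k$, form the plurisubharmonic weight $\varphi_k\coloneqq(2-p)\log|F_k|$ on $\Omega$; plurisubharmonicity uses $2-p\ge0$, and $e^{-\varphi_k}=|F_k|^{p-2}$ is locally integrable since $p-2>-2$. Apply the optimal $L^2$ extension theorem with weight $e^{-\varphi_k}$ to the datum $f$ on $\Omega_{\zeta_0}$: because $F_k$ extends $f$ we have $e^{-\varphi_k|_{\Omega_{\zeta_0}}}=|f|^{p-2}$, and this produces $F_{k+1}\in\mathcal{O}(\Omega)$ with $F_{k+1}|_{\Omega_{\zeta_0}}=f$ and
\[
\frac{1}{\pi r^2}\int_\Omega|F_{k+1}|^{2}|F_k|^{p-2}\le \int_{\Omega_{\zeta_0}}|f|^p.
\]

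Next, H\"older's inequality with conjugate exponents $2/p$ and $2/(2-p)$ yields
\[
\int_\Omega|F_{k+1}|^p\le\Big(\int_\Omega|F_{k+1}|^2|F_k|^{p-2}\Big)^{p/2}\Big(\int_\Omega|F_k|^p\Big)^{1-p/2}.
\]
Setting $A_k\coloneqq\int_\Omega|F_k|^p$ and $B\coloneqq\pi r^2\int_{\Omega_{\zeta_0}}|f|^p$, the two displays combine into the recursion $A_{k+1}\le B^{p/2}A_k^{1-p/2}$; since $1-p/2\in(0,1)$, induction gives $A_k\le\max(A_0,B)$ for all $k$ and $\limsup_{k\to\infty}A_k\le B$. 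This uniform $L^p$ bound together with the submean inequality makes $(F_k)$ locally uniformly bounded on $\Omega$, so Montel's theorem extracts a subsequence $F_{k_j}\to F$ locally uniformly on $\Omega$, with $F|_{\Omega_{\zeta_0}}=f$ by local uniform convergence near $\Omega_{\zeta_0}$ and $\int_\Omega|F|^p\le\liminf_j A_{k_j}\le B$ by Fatou, which is the claimed sharp estimate. The main obstacle I anticipate is the rigorous justification of each weighted $L^2$ step for the singular weight $|F_k|^{p-2}$ along $\{F_k=0\}$: local integrability handles it analytically for $p>0$, but one must verify that the B\l{}ocki--Guan--Zhou extension theorem accommodates such unbounded plurisubharmonic weights, for example by first approximating $\varphi_k$ by $(2-p)\log(|F_k|^2+\epsilon)$, applying the $L^2$ extension at level $\epsilon>0$, and letting $\epsilon\downarrow0$ via a standard weak-limit/Fatou argument.
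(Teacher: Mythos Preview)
The paper does not give a proof of this lemma; it is simply quoted from \cite{GZ-L2ext} (the non-sharp version appears earlier as Lemma~\ref{lemma6}, attributed to \cite{berndtsson3,GZ-L2ext}). Your argument is essentially the Berndtsson--P\u{a}un iteration, which reduces the sharp $L^p$ extension for $p\in(0,2)$ to the sharp weighted $L^2$ extension by repeatedly using the previous extension to build the next weight. The scheme is correct: the weight $(2-p)\log|F_k|$ is plurisubharmonic precisely because $p\le 2$; on the fibre it becomes $|f|^{p-2}$, so the weighted $L^2$ right-hand side is exactly $\int_{\Omega_{\zeta_0}}|f|^p$; and H\"older with exponents $2/p$, $2/(2-p)$ turns the weighted bound into the contractive recursion $A_{k+1}\le B^{p/2}A_k^{1-p/2}$, whose $\limsup$ is $\le B$.

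Two small clarifications. First, in the exhaustion step you do not need $\varpi(\Omega^{(j)})$ to be the full disc: the optimal $L^2$ extension theorem only requires $\sup_{\Omega}|\zeta-\zeta_0|\le r$, so any relatively compact pseudoconvex $\Omega^{(j)}\Subset\Omega$ inherits the same constant $\pi r^2$, and the Montel/Fatou limit is routine. Second, your worry about the singular weight is not a real obstruction. The Guan--Zhou theorem allows arbitrary plurisubharmonic weights; what matters is that the \emph{fibre} integral $\int_{\Omega_{\zeta_0}}|f|^2e^{-\varphi_k}=\int_{\Omega_{\zeta_0}}|f|^p$ is finite, not that $e^{-\varphi_k}=|F_k|^{p-2}$ is locally integrable on $\Omega$. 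The $\epsilon$-regularisation you outline is one standard way to derive that general-weight statement from a smooth-weight version, so it is a fine safeguard, but it is not where the argument would break.
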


Depending on the optimal $L^p$ extension theorem, the following result for fiberwise $p$-Bergman kernels with respect to $\xi$ can be established.

Let $\Omega, \omega, \Omega_{\zeta}$ be as in the settings of Proposition \ref{logpsh-xi}, $\xi\in\ell_1^{(n)}$, and $p\in (0,2]$. Denote by $K_{\xi,\Omega_{\zeta},p}(z)$ the $p$-Bergman kernel with respect to $\xi$ on each fiber $\Omega_{\zeta}$.

\begin{Proposition}\label{prop-p.BK.log.psh}
    $\log K_{\xi,\Omega_{\zeta},p}(z)$ is a plurisubharmonic function on $\Omega$ with respect to $(z,\zeta)$.
\end{Proposition}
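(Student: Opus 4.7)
The plan is to adapt the proof of Proposition \ref{logpsh-xi} (the $p=2$ case from \cite{BG1}) with the optimal $L^p$ extension theorem (Lemma \ref{lem-Lp.ext}) replacing its $L^2$ counterpart. First I would verify that $(z,\zeta)\mapsto \log K_{\xi,\Omega_\zeta,p}(z)$ is upper semi-continuous on $\Omega$: given $(z_k,\zeta_k)\to (z_0,\zeta_0)$, pick extremal functions $f_k\in A^p(\Omega_{\zeta_k})$ realizing $K_{\xi,\Omega_{\zeta_k},p}(z_k)$, extend each to a tube over a small disk around $\zeta_k$ via Lemma \ref{lem-Lp.ext}, and extract a normal-family limit using Proposition \ref{pro3} and Lemma \ref{lemma3}; Fatou's lemma then yields the required upper bound.

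Next, to establish plurisubharmonicity on $\Omega$, I would reduce to subharmonicity along arbitrary complex lines. Given $(z_0,\zeta_0)\in\Omega$ and a direction $(X,\eta)\in\mathbb{C}^{n+1}$, the case $\eta=0$ reduces to the plurisubharmonicity of $\log K_{\xi,\Omega_{\zeta_0},p}(\cdot)$ on a single fiber, which is immediate from the supremum characterization in Proposition \ref{pro2}. When $\eta\neq 0$, I would use the affine biholomorphism $\Phi(w,\lambda)=(z_0+\lambda X+w,\,\zeta_0+\lambda\eta)$: the domain $\widetilde{\Omega}:=\Phi^{-1}(\Omega)$ is pseudoconvex in $\mathbb{C}^{n+1}$ and $K_{\xi,\Omega_{\zeta_0+\lambda\eta},p}(z_0+\lambda X)=K_{\xi,\widetilde{\Omega}_\lambda,p}(0)$, so it suffices to show that $\lambda\mapsto\log K_{\xi,\widetilde{\Omega}_\lambda,p}(0)$ is subharmonic at every $\lambda_0$ with $0\in\widetilde{\Omega}_{\lambda_0}$.

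For this one-variable reduction, I would fix such a $\lambda_0$, take the extremal $f_0\in A^p(\widetilde{\Omega}_{\lambda_0})$ with $(\xi\cdot f_0)(0)=K_{\xi,\widetilde{\Omega}_{\lambda_0},p}(0)^{1/p}$ and $\|f_0\|_p=1$, and apply Lemma \ref{lem-Lp.ext} on the pseudoconvex tube $\widetilde{\Omega}\cap\varpi_1^{-1}(\Delta(\lambda_0,r))$ for $r$ small enough that $(0,\lambda)\in\widetilde{\Omega}$ for all $\lambda\in\Delta(\lambda_0,r)$. This produces $F$ extending $f_0$ with $\frac{1}{\pi r^2}\int|F|^p\le 1$. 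Setting $g(\lambda):=(\xi\cdot F(\cdot,\lambda))(0)$, which is holomorphic in $\lambda$ by Lemma \ref{lemma2} and satisfies $|g(\lambda_0)|^p=K_{\xi,\widetilde{\Omega}_{\lambda_0},p}(0)$, Fubini combined with the pointwise bound $|g(\lambda)|^p\le K_{\xi,\widetilde{\Omega}_\lambda,p}(0)\cdot\|F(\cdot,\lambda)\|_{L^p(\widetilde{\Omega}_\lambda)}^p$ gives
\[
\frac{1}{\pi r^2}\int_{\Delta(\lambda_0,r)}\frac{|g(\lambda)|^p}{K_{\xi,\widetilde{\Omega}_\lambda,p}(0)}\,d\lambda(\lambda)\le 1.
\]
Jensen's inequality applied to $-\log$, together with the submean property of the subharmonic function $p\log|g|$ at $\lambda_0$, converts this into
\[
\log K_{\xi,\widetilde{\Omega}_{\lambda_0},p}(0)\le \frac{1}{\pi r^2}\int_{\Delta(\lambda_0,r)}\log K_{\xi,\widetilde{\Omega}_\lambda,p}(0)\,d\lambda(\lambda),
\]
which in tandem with upper semi-continuity yields the sought subharmonicity.

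The main obstacle I anticipate is the upper semi-continuity step, since the fibers $\Omega_\zeta$ genuinely vary with $\zeta$ and the usual normal-family argument requires uniformly extending each $f_k$ off of its fiber; handling this cleanly, especially when the base points $(z_k,\zeta_k)$ are also allowed to move, will require carefully combining Lemma \ref{lem-Lp.ext} with the sub-mean bounds of Proposition \ref{pro3} in a uniform way. The remaining steps are essentially a direct transcription of the Guan--Zhou-type argument of \cite{BG1}, with the only substantive change being the use of the $L^p$ extension instead of the $L^2$ extension.
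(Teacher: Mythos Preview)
Your proposal is correct and takes essentially the same approach as the paper: both use the optimal $L^p$ extension (Lemma~\ref{lem-Lp.ext}) together with Jensen's inequality and the subharmonicity of $\lambda\mapsto p\log|(\xi\cdot F_\lambda)(0)|$ to obtain the sub-mean-value inequality, and your affine change of variables for mixed $(z,\zeta)$-directions makes explicit a reduction the paper leaves to its reference~\cite{BG1} (the paper as written only checks the pure $\zeta$-direction). One point worth noting: for upper semicontinuity the paper sidesteps the moving-fiber difficulty you anticipate by invoking Proposition~\ref{prop-exhaustion} rather than extending and extracting a normal-family limit---given $(z_0,\zeta_0)$ and $\epsilon>0$, choose $U\Subset\Omega_{\zeta_0}$ with $K_{\xi,U,p}(z_0)\le K_{\xi,\Omega_{\zeta_0},p}(z_0)+\epsilon$; since $U\times\{\zeta_0\}\Subset\Omega$ one has $U\subset\Omega_\zeta$ for $\zeta$ near $\zeta_0$, whence $K_{\xi,\Omega_\zeta,p}(z)\le K_{\xi,U,p}(z)$, and continuity of $K_{\xi,U,p}$ on the fixed domain $U$ (Proposition~\ref{pro4}) finishes the argument.
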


\begin{proof}
    The proof is almost as the same as the proof of Proposition \ref{logpsh-xi} in \cite{BG1}, except using the optimal $L^p$ extension theorem rather than the optimal $L^2$ extension theorem.

    Proposition \ref{prop-exhaustion} indicates that $\log K_{\xi,\Omega_{\zeta},p}(z)$ is upper-semicontinuous on $\Omega$, and we have got the plurisubharmonicity of $\log K_{\xi,\Omega_{\zeta},p}(z)$ in $z$ (see Proposition \ref{pro2}). In order to get the mean value inequality of $\log K_{\xi,\Omega_{\zeta},p}(z)$ in $\zeta$, we need to apply the optimal $L^p$ extension theorem and Guan--Zhou method.

    Let $(z_0,\zeta_0)\in\Omega$ and $f\in A^p(\Omega_{\zeta_0})$ such that
    \[K_{\xi,\Omega_{\zeta_0},p}(z_0)=\frac{|(\xi\cdot f)(z_0)|^p}{\int_{\Omega_{\zeta_0}}|f|^p}.\]
    For any $r>0$ with $\Delta(\zeta_0,r)\subseteq\omega$, we prove the sub-mean value inequality of $\log K_{\xi,\Omega_{\zeta},p}(z_0)$ in $\zeta\in\Delta(\zeta_0,r)$. We simply assume $\omega=\Delta(\zeta_0,r)$. Then according to Lemma \ref{lem-Lp.ext}, there exists $F\in A^p(\Omega)$ such that $F|_{\Omega_{\zeta_0}}=f$ and
    \[\frac{1}{\pi r^2}\int_{\Omega}|F|^p\le\int_{\Omega_{\zeta_0}}|f|^p.\]
    Consequently,
    \begin{flalign*}
        \begin{split}
            \log\left(\int_{\Omega_{\zeta_0}}|f|^p\right)&\ge\log\left(\frac{1}{\pi r^2}\int_{\Omega}|F|^p\right)\\
            &= \log\left(\frac{1}{\pi r^2}\int_{\zeta\in\Delta(\zeta_0,r)}\int_{\Omega_{\zeta}}|F_{\zeta}|^p\right)\\
            &\ge \frac{1}{\pi r^2}\int_{\zeta\in\Delta(\zeta_0,r)}\log\left(\int_{\Omega_{\zeta}}|F_{\zeta}|^p\right)\\
            &\ge \frac{1}{\pi r^2}\int_{\zeta\in\Delta(\zeta_0,r)}\bigg(\log|(\xi\cdot F_{\zeta})(z_0)|^p-\log K_{\xi,\Omega_{\zeta},p}(z_0)\bigg),
        \end{split}
    \end{flalign*}
    where $F_{\zeta}(z)\coloneqq F(z,\zeta)$. Since $\xi\in\ell_1^{(n)}$, we can verify that $(\xi\cdot F_{\zeta})(z_0)$ is holomorphic in $\zeta$ (see also \cite{BG1, BG2}), yielding that $\log|(\xi\cdot F_{\zeta})(z_0)|^p$ is subharmonic in $\zeta$. Eventually, we get that
    \[\log K_{\xi,\Omega_{\zeta_0},p}(z_0)\le\frac{1}{\pi r^2}\int_{\zeta\in\Delta(\zeta_0,r)}\log K_{\xi,\Omega_{\zeta},p}(z_0),\]
    which is the sub-mean value inequality of $\log K_{\xi,\Omega_{\zeta},p}(z_0)$ in $\zeta$ on $\Delta(\zeta_0,r)$, and we get the desired result.
\end{proof}

Now we consider a pseudoconvex domain $D\subset\mathbb{C}^n$ containing the origin $o$, with a negative plurisubharmonic function $\varphi$ on $D$ such that $\varphi(o)=-\infty$. Let $\xi\in\ell^{(n)}_1$, and $p\in (0,2]$. Denote by
\[K_{\xi,\{\varphi<t\},p}(o), \quad t\in (-\infty,0]\]
the $p$-Bergman kernels with respect to $\xi$ on the sub-level sets $\{\varphi<t\}$ of $\varphi$. With a similar discussion in \cite{BL16} (see also \cite{BG1, BG2, BG3}), one can obtain from Proposition \ref{prop-p.BK.log.psh} that

\begin{Proposition}\label{prop-log.convex}
    $\log K_{\xi,\{\varphi<t\},p}(o)$ is convex in $t\in (-\infty,0]$.
\end{Proposition}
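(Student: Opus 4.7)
The plan is to reduce the claim to the fiberwise plurisubharmonicity result Proposition~\ref{prop-p.BK.log.psh} via a standard construction: embed the family $\{\varphi < t\}$ as the fibers of a pseudoconvex domain in $\mathbb{C}^{n+1}$, apply that proposition, and then upgrade the resulting subharmonicity in one complex variable to convexity by exploiting invariance under imaginary translations.

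Concretely, I would consider
\[
\widetilde{\Omega} \coloneqq \{(z,\zeta) \in D \times \mathbb{C} : \varphi(z) < \mathrm{Re}(\zeta) < 0\}.
\]
Since $D \times \mathbb{C}$ is pseudoconvex and both $\varphi(z) - \mathrm{Re}(\zeta)$ and $\mathrm{Re}(\zeta)$ are plurisubharmonic on it, the set $\widetilde{\Omega}$ is pseudoconvex. The natural projection $\varpi(z,\zeta) = \zeta$ gives fibers
\[
\widetilde{\Omega}_\zeta = \{z \in D : \varphi(z) < \mathrm{Re}(\zeta)\},
\]
which depend only on $t = \mathrm{Re}(\zeta)$; and because $\varphi(o) = -\infty$, the origin $o$ lies in every fiber $\widetilde{\Omega}_\zeta$ with $\mathrm{Re}(\zeta) < 0$.

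Proposition~\ref{prop-p.BK.log.psh} then says that $\Phi(z,\zeta) \coloneqq \log K_{\xi, \widetilde{\Omega}_\zeta, p}(z)$ is plurisubharmonic on $\widetilde{\Omega}$. Restricting to the complex line $z = o$ yields a subharmonic function $u(\zeta) \coloneqq \Phi(o,\zeta) = \log K_{\xi,\{\varphi < \mathrm{Re}(\zeta)\},p}(o)$ on the half-plane $\{\mathrm{Re}(\zeta) < 0\}$ that by construction depends only on $\mathrm{Re}(\zeta)$. Writing $u(\zeta) = v(\mathrm{Re}(\zeta))$, the identity $\Delta_\zeta u = v''(\mathrm{Re}(\zeta))$ forces $v'' \geq 0$ in the distributional sense, so $v$ is convex on $(-\infty, 0)$. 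The extension of convexity to the endpoint $t = 0$ follows from the exhaustion $\{\varphi < t\} \nearrow \{\varphi<0\}$ as $t \to 0^-$ combined with Proposition~\ref{prop-exhaustion}.

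There is no major obstacle here; the argument is a templated application of the Berndtsson--\L{}empert philosophy developed in \cite{BL16} and adapted in \cite{BG1, BG2, BG3}. The only points requiring attention are the verification that $\widetilde{\Omega}$ is pseudoconvex (which reduces to standard stability of psh sublevel sets), ensuring that $u$ is not identically $-\infty$ (guaranteed by the uniform lower bound on $K_{\xi,\{\varphi<t\},p}(o)$ from Proposition~\ref{pro10}), and handling the boundary value at $t=0$ via upper semicontinuity from the exhaustion argument.
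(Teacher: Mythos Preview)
Your proposal is correct and follows essentially the same approach as the paper: construct the pseudoconvex domain $\{(z,\zeta): \varphi(z)<\mathrm{Re}\,\zeta\}$ (you additionally impose $\mathrm{Re}\,\zeta<0$, which is harmless), apply Proposition~\ref{prop-p.BK.log.psh} to get plurisubharmonicity in $(z,\zeta)$, restrict to $z=o$, and use that a subharmonic function of $\zeta$ depending only on $\mathrm{Re}\,\zeta$ is convex in $\mathrm{Re}\,\zeta$. Your write-up is in fact more careful than the paper's, which omits the endpoint discussion at $t=0$ and the non-triviality check.
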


\begin{proof}
    Note that the domain
    \[\big\{(z,\zeta)\in D\times\mathbb{C}\colon \varphi(z)-\re\zeta<0\big\}\]
    is pseudoconvex in $\mathbb{C}^{n+1}$. Then Proposition \ref{prop-log.convex} implies that $\log K_{\xi,\{\varphi<\re\zeta\},p}(o)$ is plurisubharmonic in $\zeta$. Thus, since $\log K_{\xi,\{\varphi<\re\zeta\},p}(o)$ only depends on $\re\zeta$, we get that $\log K_{\xi,\{\varphi<t\},p}(o)$ is convex in $t\in (-\infty,0]$. 
\end{proof}

\subsection{Pluricomplex Green function}
For a domain $D$ in $\mathbb{C}^n$ and any $z\in D$, denote
\[\mathcal{L}_z(D)\coloneqq \Big\{\phi\in\mathrm{PSH}^-(D) \colon \limsup_{w\to z}\big(\phi(w)-\log|w-z|\big)<+\infty\Big\}.\]
The \emph{pluricomplex Green function} on $D$ with the pole $z$ is defined by:
\[G_{D}(\zeta,z)\coloneqq \sup\big\{\phi(\zeta) \colon \phi\in \mathcal{L}_z(D)\big\}.\]
It is easy to see that $G_{D}(\zeta,z)\le \log|\zeta-z|+O(1)$ when $\zeta$ is near $z$. Especially, for a bounded hyperconvex domain $D$, it is well-known that $G_{D}(\zeta,z)$ is continuous in $\zeta$ and tends to $0$ when $\zeta\to\partial D$ for fixed $z$.

Let $D$ be a pseudoconvex domain in $\mathbb{C}^n$ with $o\in D$. Recall
\[D_a\coloneqq e^{-a}\{G_{D}(\cdot,o)<a\}, \quad \forall\, a\in (-\infty,0].\]
Let $p\in (0,+\infty)$. For any $\xi=(\xi_{\alpha})\in\ell_0^{(n)}$, where
\[\ell_0^{(n)}\coloneqq \big\{(\eta_{\alpha})_{\alpha\in\mathbb{N}^n}\in\ell_1^{(n)} \colon \exists\, m\in\mathbb{N}, \ \text{s.t.} \ \eta_{\alpha}=0, \ \forall\, \alpha \ \text{with} \ |\alpha|\ge m\big\},\]
denote
\[\deg(\xi)\coloneqq \max\big\{k\in\mathbb{N} \colon \exists\, \alpha\in\mathbb{N}^n \ \text{s.t.} \ |\alpha|=k \ \text{and} \ \xi_{\alpha}\neq 0\big\}.\]

\begin{Lemma}\label{lem-BK.bounded.above}
    For any $\xi\in\ell_0^{(n)}$ with $\deg(\xi)=k$, there exists a positive constant $C$, such that for any $a\le 0$, the following inequality holds,
    \[e^{(2n+pk)a}K_{\xi,\{G_{D}(\cdot,o)<a\},p}(o)\le C.\]
\end{Lemma}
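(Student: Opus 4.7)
The plan is to obtain an upper bound on $K_{\xi,\Omega_a,p}(o)$, where $\Omega_a := \{G_D(\cdot,o)<a\}$, in terms of the quantity $\delta_a := \mathrm{dist}(o,\partial\Omega_a)$, and then to bound $\delta_a$ from below using the logarithmic pole of the pluricomplex Green function at $o$.

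First, I would apply the derivative estimate \eqref{b3} from the proof of Proposition \ref{pro3}, taken with $\Omega = \Omega_a$ and the singleton $K = \{o\}$, so that the parameter $r$ there equals $\delta_a$. Since $\xi_\alpha = 0$ for every $|\alpha| > k$, summing against $\xi$ yields, for every $f \in A^p(\Omega_a)$,
\[
|(\xi \cdot f)(o)| \le M(\delta_a)\,(n!)^{1/p}\Bigl(\frac{4}{\pi \delta_a^2}\Bigr)^{n/p}\|f\|_{L^p(\Omega_a)},
\]
where $M(r) := \sum_{|\alpha|\le k}|\xi_\alpha|(2\sqrt{n}/r)^{|\alpha|}$. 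By Proposition \ref{pro2} this gives $K_{\xi,\Omega_a,p}(o) \le C_0\,M(\delta_a)^p\,\delta_a^{-2n}$ for a constant $C_0 = C_0(n,p)$. Splitting $M(r)$ into its top-degree and lower-order pieces, one easily gets $M(r) \le A\max(1,r^{-k})$ for a constant $A = A(\xi,n)$.

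Second, I would show $\delta_a \ge c_0\,e^a$ for a uniform constant $c_0 > 0$. Since $G_D(\cdot,o)$ itself lies in $\mathcal{L}_o(D)$, the defining condition produces $\rho > 0$ and $C' \in \mathbb{R}$ with
\[
G_D(w,o) \le \log|w-o| + C' \qquad \text{for } |w-o| < \rho.
\]
Hence for $a \le \log\rho + C'$ the ball $B(o, e^{a - C'})$ is contained in $\Omega_a$, so $\delta_a \ge e^{a-C'}$; for $a \in (\log\rho + C', 0]$, monotonicity $\Omega_a \supset \Omega_{\log\rho+C'} \supset B(o,\rho)$ forces $\delta_a \ge \rho \ge \rho\,e^a$. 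Combining the two regimes yields the claim.

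Putting these together,
\[
e^{(2n+pk)a}K_{\xi,\Omega_a,p}(o) \le C_0 A^p \max\bigl(e^{(2n+pk)a}\delta_a^{-2n-pk},\; e^{pka}\bigl(e^a/\delta_a\bigr)^{2n}\bigr),
\]
and $\delta_a \ge c_0 e^a$ together with $a \le 0$ and $pk \ge 0$ bounds both terms by constants depending only on $c_0,n,p,k$, giving the asserted $C$. The main difficulty is just extracting the uniform lower bound $\delta_a \gtrsim e^a$ from the log-pole condition on $G_D(\cdot,o)$; the rest is routine bookkeeping of the Cauchy-type estimates already developed in Proposition \ref{pro3}.
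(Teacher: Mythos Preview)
Your proof is correct and follows essentially the same strategy as the paper: use the logarithmic pole of $G_D(\cdot,o)$ to produce a ball of radius comparable to $e^a$ inside $\{G_D(\cdot,o)<a\}$, and then invoke the Cauchy-type bounds underlying Proposition~\ref{pro3}/inequality~\eqref{align5} to control $K_{\xi,\{G_D(\cdot,o)<a\},p}(o)$. The paper routes this through the monotonicity $K_{\xi,\Omega_a,p}(o)\le K_{\xi,\mathbb{B}^n(o,e^{a-M}),p}(o)$ and then applies \eqref{align5} on the ball, whereas you apply the derivative estimate \eqref{b3} directly on $\Omega_a$ via $\delta_a$; these are equivalent formulations of the same idea. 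One small point in your favor: the paper's proof only spells out the case of sufficiently negative $a$, while you explicitly treat the remaining bounded interval $a\in(\log\rho+C',0]$ by monotonicity, which makes the uniform bound over all $a\le 0$ fully explicit.
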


\begin{proof}
    We choose some sufficiently small $r_0>0$ and a real constant $M$ such that
    \[G_{D}(z,o)\le \log|z|+M, \quad \forall\, |z|<r_0.\]
    It follows that
    \[\mathbb{B}^n(o,e^{a-M})\subset \{G_{D}(\cdot,o)<a\}\]
    for sufficiently negative $a$. Thus, according to the estimate (\ref{align5}) on balls, we deduce
    \[K_{\xi,\{G_{D}(\cdot,o)<a\},p}(o)\le K_{\xi,\mathbb{B}^n(o,e^{a-M}),p}(o)\le C e^{-(2n+pk)a},\]
    for some positive constant $C$ independent of $a$.
\end{proof}

\subsection{Proof of Theorem \ref{thm-Kp}}

Now we prove Theorem \ref{thm-Kp}.

\begin{proof}[Proof of Theorem \ref{thm-Kp}]
    First, Proposition \ref{prop-log.convex} verifies the convexity of the function
    \[(-\infty,0] \ni a \to \log K_{\xi,\{G<a\},p}(o).\]
    Here we denote the pluricomplex Green function $G_{\Omega}(\cdot,o)$ by $G$ for short. It follows that $e^{(2n+pk)a}K_{\xi,\{G<a\},p}(o)$ is also convex in $a\in (-\infty,a]$. In addition, since $e^{(2n+pk)a}K_{\xi,\{G<a\},p}(o)$ is bounded above by Lemma \ref{lem-BK.bounded.above}, we obtain that the function $e^{(2n+pk)a}K_{\xi,\{G<a\},p}(o)$ is non-decreasing in $a\in (-\infty,0]$.

    Next, we prove the inequality 
    \[\limsup_{a\to -\infty} K^{H,p}_{\Omega_a}(o)\ge K_{I_{\Omega}(o)}^{H,p}(o).\]
    Like what is done in \cite{BlZw20}, we can approximate the pseudoconvex domain $\Omega$ by bounded hyperconvex domains, which allows us to assume that $\Omega$ is a bounded hyperconvex domain. Then as in \cite{BlZw20} for the case $p=2$, the convergence in the sense of Hausdorff (cf. \cite{Zw00}): $\Omega_a\to I_{\Omega}(o)$ for the bounded hyperconvex domain $\Omega$ implies the desired inequality.

    Now for every $\xi\in\mathbf{S}_H$, we obtain
    \begin{flalign*}
        \begin{split}
            \lim_{a\to -\infty}e^{2n+pk}K_{\xi,\{G<a\},p}(o)&\ge \limsup_{a\to -\infty} e^{2n+pk}K^{H,p}_{\{G<a\}}(o)\\
            &=\limsup_{a\to -\infty} K^{H,p}_{\Omega_a}(o)\ge K_{I_{\Omega}(o)}^{H,p}(o).
        \end{split}
    \end{flalign*}

    The proof is complete.
\end{proof}

\subsection{Proofs of the corollaries}

Now we give the proofs of the corollaries of Theorem \ref{thm-Kp}, including Corollary \ref{cor-BS.nontrivial}, \ref{cor-higher.BK.non.decreasing}, and \ref{cor-BS.infinite.dim}.

\begin{proof}[Proof of Corollary \ref{cor-BS.nontrivial}]
    If the space $A^p(I_{\Omega}(o))$ is nontrivial, then there exists a homogeneous $H$ such that $K^{H,p}_{I_{\Omega}(o)}(o)>0$. Hence, choosing any $\xi\in\mathbf{S}_H$, Theorem \ref{thm-Kp} indicates that $K_{\xi,\Omega,p}(o)>0$, yielding that $A^p(\Omega)$ is nontrivial.
\end{proof}

\begin{proof}[Proof of Corollary \ref{cor-higher.BK.non.decreasing}]
    According to Theorem \ref{thm-inf} and Theorem \ref{thm-Kp}, we have that
    \[K^{H,p}_{\{G<a\}}(o)=\inf_{\xi\in\mathbf{S}_H}K_{\xi,\{G<a\},p}(o),\]
    and
    \[(-\infty,0]\ni a \to e^{(2n+pk)a}K_{\xi,\{G<a\},p}(o)\]
    is non-decreasing. It follows that
    \[e^{(2n+pk)a}K^{H,p}_{\{G<a\}}(o)=\inf_{\xi\in\mathbf{S}_H}e^{(2n+pk)a}K_{\xi,\{G<a\},p}(o)\]
    is also non-decreasing in $a\in (-\infty,0]$. Since
    \[e^{(2n+pk)a}K^{H,p}_{\{G<a\}}(o)=K^{H,p}_{\Omega_a}(o),\]
    we obtain that $K^{H,p}_{\Omega_a}(o)$ is non-decreasing in $a\in (-\infty,0]$.
\end{proof}

\begin{proof}[Proof of Corollary \ref{cor-BS.infinite.dim}]
    Combining Theorem \ref{thm-inf} and Theorem \ref{thm-Kp}, for $p\in [1,2]$, we have
    \[K^{H,p}_{\Omega}(o)=\inf_{\xi\in\mathbf{S}_H}K_{\xi,\Omega,p}(o)\ge K_{I_{\Omega}(o)}^{H,p}(o).\]

    If the space $A^p(I_{\Omega}(o))$ is infinitely dimensional, then there exists a sequence of homogeneous polynomials $\{H_j\}_{j\ge 1}$ of degree $k_j$, where $k_j$ increasingly tends to $+\infty$, such that $K^{H_j,p}_{I_{\Omega}(o)}>0$ for each $j$. Since $K^{H_j,p}_{\Omega}(o)\ge K_{I_{\Omega}(o)}^{H_j,p}(o)$, we have $K^{H_j,p}_{\Omega}(o)>0$ for each $j$, which implies $A^p(\Omega)$ is infinitely dimensional.
\end{proof}

\end{document}